\newtheorem{theorem}{Theorem}[section]
\newtheorem{corollary}[theorem]{Corollary}
\newtheorem{lemma}[theorem]{Lemma}
\newtheorem{proposition}[theorem]{Proposition}
\newtheorem{question}[theorem]{Question} 
\theoremstyle{definition}
\newtheorem{definition}[theorem]{Definition}
\newtheorem{remark}[theorem]{Remark}
\newtheorem{example}[theorem]{Example}
\DeclareMathOperator{\Ends}{Ends}
    \author[Mark Hughes]{Mark Hughes}
    \address{Brigham Young University\\Provo, UT, 84602 USA}
    \email{hughes@mathematics.byu.edu}
    \author[Alexandra Kjuchukova]{Alexandra Kjuchukova}
    \address{University of Notre Dame\\Notre Dame, IN, 46556 USA}
    \email{akjuchuk@nd.edu}
    \author[Maggie Miller]{Maggie Miller}
    \address{University of Texas at Austin\\Austin, TX, 78712 USA}
\email{maggie.miller.math@gmail.com}
\title{Branched Covers of Open Manifolds\\
\vspace{-.15in}}
\begin{document}

\vspace*{-.1in}

\thanks{MH was supported by an NSF grant DMS-2213295; AK by an NSF grant DMS 2204349;  MM by a Clay Research Fellowship, NSF grant DMS-2404810 and Simons Foundation Gift MPS-TSM-00007679.}

\begin{abstract} For $m=2$ and $m=3$ we prove that any connected, oriented, open manifold 
$M^m$ admits a simple branched covering map over  $\mathbb{R}^m$. When $M$ has $k$ ends and $k$ is finite, the degree of the cover can be taken to be $mk$. Regardless of the number of ends, $M$ admits a branched covering map of countably infinite degree over $\mathbb{R}^m$. 
We also investigate which compact manifolds 
are \textit{universal bases}, that is, are branch covered by all compact manifolds in the same dimension.
\end{abstract}

\maketitle

\vspace*{-.3in}
\section{Introduction}\label{sec:intro}

In this paper manifolds are, with a few explicit exceptions, orientable; but they may or may not be compact. We are interested in manifolds $M^m$ which have the following universal property: every $m$--manifold is a branched cover of $M$. We call such manifolds {\it universal bases}. It has been known since Alexander~\cite{alexander1920note} that spheres are universal bases in the PL category in all dimensions. A half century later it was shown that $S^3$ is universal even if we restrict to 3--fold covers with connected branching sets~\cite{hilden1974every, hirsch1974offene, montesinos1974representation}; and that assuming a manifold $M^3$ is a homotopy 3-sphere actually suffices to conclude that $M$ is a universal 3-base~\cite{piergallini1992covering}. For $S^4$, one needs degree 4 to achieve universality and degree 5 suffices for the branching sets to be embedded~\cite{iori2002}.  

We are interested in understanding what other universal bases exist. One family of examples of universal bases in dimension $m$ are orientable manifolds whose universal cover is $S^m$. Thus, in odd dimensions, lens spaces are examples of universal bases. Furthermore, with some work one can deduce from~\cite{montesinos2002representing} that $\mathbb{R}^3$ is a universal base among open manifolds; and the same is shown for  $\mathbb{R}^4$ in~\cite{ piergallini2019branched}. Both proofs rely on compactifying the spaces, and the local picture of the branched cover in neighborhoods of the ends is not explicit. 
To our knowledge, this is the first investigation of the question: which $m$--manifolds are universal bases? We restrict to manifolds which have empty boundary (but are not necessarily compact).

When approaching universality, it makes sense to constrain the degree of the covering maps. Simply put, higher degree maps increase the potential for universality: even spheres $S^m$ are not universal bases in degree less than $m$. There are also some basic assumptions without which universality is a vacuous concept. We propose the following definition.

\begin{definition} \label{def:uni--base} Fix an integer $n\ge 2$, and let CAT denote some compatible subset of the adjectives \emph{compact}, \emph{open}, \emph{closed}, and \emph{orientable}. A CAT $m$--manifold $M$ is a {\it universal $n$--base} if for every connected CAT $m$--manifold $N$, there exists an $n$--fold branched covering $f: N\to M$.  When $M$ is open, we additionally say that $M$ is a \emph{universal $\aleph_0$--base} if every open $m$--manifold $N$ admits a countably infinite-sheeted branched covering $f: N\to M$.
\end{definition}

Once the properties of $M$ are specified, the restrictions we impose on $N$ are natural: if $M$ is open, closed or orientable, then so is $N$. When $M$ is noncompact, then since $N$ surjects to $M$ we must also have $N$ noncompact.

On the other hand,  when $M$ is non-orientable it is in principle possible that all (open or closed, as $M$) manifolds, including orientable and non-orientable ones, admit degree--$n$ branched covering maps over $M$. However, in Corollary~\ref{cor:nonorientodd} we prove that no compact non-orientable $n$-bases exist.

We classify compact universal bases in dimensions 2 and 3 and prove that $\mathbb{R}^2$ and $\mathbb{R}^3$ are universal bases using arguments that do not rely on compactification.  Our results can be summarized as follows: 

\begin{theorem}\label{thm:summary}\leavevmode
    \begin{enumerate}
        \item For any $n\in\mathbb{N}$, the only closed 2--dimensional universal $n$--base is $S^2$.
        \item A closed orientable 3--manifold $M$ is a universal base if and only if $M$ is spherical.
        \item $\mathbb{R}^2$ and $\mathbb{R}^3$ are universal bases. 
    \end{enumerate}
\end{theorem}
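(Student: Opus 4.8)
I would first cut down the possibilities with Riemann--Hurwitz. If a closed surface $M$ is a universal $n$-base then, since $S^2$ is orientable and hence a legitimate test surface regardless of whether $M$ is orientable, there is a degree-$n$ branched cover $S^2\to M$; Riemann--Hurwitz gives $2=n\chi(M)-R$ with total ramification $R\ge 0$, so $\chi(M)\ge 2/n>0$ and $M$ is $S^2$ or $\mathbb{RP}^2$. To eliminate $\mathbb{RP}^2$ I would use a sign obstruction: for any degree-$n$ branched cover $N\to\mathbb{RP}^2$ with branch set $B=\{p_1,\dots,p_k\}$ and local monodromies $\sigma_1,\dots,\sigma_k\in S_n$, the presentation $\pi_1(\mathbb{RP}^2\setminus B)=\langle x_1,\dots,x_k,\gamma\mid x_1\cdots x_k\gamma^2\rangle$ forces $\sigma_1\cdots\sigma_k$ to be a square in $S_n$ and hence an even permutation; since $\operatorname{sgn}(\sigma_i)=(-1)^{n-r_i}$ with $r_i$ the number of cycles of $\sigma_i$, the total ramification $R=\sum_i(n-r_i)$ is even, so $\chi(N)=n-R\equiv n\pmod 2$. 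Choosing $N$ with $\chi(N)\not\equiv n\pmod 2$ (take $N=S^2$ if $n$ is odd, $N=\mathbb{RP}^2$ if $n$ is even) contradicts universality, so $M\cong S^2$; conversely $S^2$ is a universal $n$-base for every $n\ge 2$, starting from the hyperelliptic double branched covers onto $S^2$ and repeatedly inserting a trivial sheet (which raises the degree by one without changing the total space).

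\textbf{Proof plan for part (2).} For the ``if'' direction I would invoke Hilden--Hirsch--Montesinos: every closed orientable $3$-manifold is a simple $3$-fold branched cover of $S^3$, and composing with the unbranched covering $S^3\to S^3/\Gamma$ realizes it as a degree-$3|\Gamma|$ branched cover of any prescribed spherical $M$. For ``only if'', I would argue at the level of fundamental groups: a degree-$n$ branched cover $f:N\to M$ with branch locus $B$ restricts to an honest $n$-fold covering $N\setminus f^{-1}(B)\to M\setminus B$, and comparing the two Dehn fillings (killing meridians upstairs and downstairs) shows that $\pi_1(M)$ contains a subgroup of index at most $n$ which is a quotient of $\pi_1(N)$. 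Taking $N=S^3$ forces that subgroup, and hence $\pi_1(M)$, to be finite, and the elliptization theorem then gives that $M$ is spherical.

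\textbf{Proof plan for part (3).} This is where the real work lies; it amounts to showing, as announced in the abstract, that every connected oriented open $m$-manifold with $m\in\{2,3\}$ is a simple branched cover of $\mathbb{R}^m$. I would proceed by exhaustion: write $M=\bigcup_i M_i$ as an increasing union of compact codimension-zero submanifolds-with-boundary and $\mathbb{R}^m=\bigcup_i B_i$ as an increasing union of round balls, and inductively construct simple branched covers $f_i:M_i\to B_i$ with $f_{i+1}$ restricting (up to isotopy) to $f_i$ on $M_i$. Each inductive step uses a relative form of Alexander's branched-covering theorem (with simple $3$-fold covers in dimension three, as in Hilden--Hirsch--Montesinos) --- a compact oriented $m$-manifold with boundary is a simple branched cover of $D^m$ with controlled behavior over the boundary sphere --- to cover the new collar $M_{i+1}\setminus\operatorname{int}M_i$ by a branched cover of the shell $B_{i+1}\setminus\operatorname{int}B_i$, after stabilizing with trivial sheets so the degrees agree along $\partial B_i$. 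The direct limit is a branched covering $M\to\mathbb{R}^m$, and accounting for how many new sheets each stage requires yields degree $mk$ when $M$ has $k<\infty$ ends and degree $\aleph_0$ in general; part (3) of the theorem, and with it the universality of $\mathbb{R}^2$ and $\mathbb{R}^3$ among open orientable manifolds, follows immediately.

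\textbf{Expected main obstacle.} The technical heart --- and precisely what the earlier compactification proofs left implicit --- is the passage to the limit near the ends: one must organize the gluings so that the limiting branch locus is a genuine locally finite codimension-two complex and the limiting map is \emph{proper}, so that it is an honest (finite- or $\aleph_0$-sheeted) branched covering rather than merely a map that restricts to a covering on each compact piece. Concretely this means steering each end of $M$ out toward infinity in $\mathbb{R}^m$ with an explicit, eventually-product local model for the cover over a neighborhood of infinity, while preventing the sheet count from blowing up when $k$ is finite; the bookkeeping that pins down the sharp degree $mk$ --- and the mild differences between $m=2$, where one tracks Riemann--Hurwitz data, and $m=3$, where one tracks handle or Heegaard data --- is where I expect the argument to be most delicate.
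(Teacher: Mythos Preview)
Your overall strategy matches the paper's in all three parts, with minor variations in (1) and (2). For (1) you reduce via Riemann--Hurwitz where the paper uses a Betti-number transfer (Lemma~\ref{lemma:betti}), and your parity obstruction for $\mathbb{RP}^2$ is essentially that of Proposition~\ref{cor:surfaces-nonuni}, though you avoid first passing to a simple cover by reading the parity directly from the sign homomorphism. For (2) your Dehn-filling argument that $f_*\pi_1(S^3)$ has finite index in $\pi_1(M)$ is correct and gives the slightly sharper bound $|\pi_1(M)|\le n$; the paper instead lifts $S^m\to M$ to the universal cover and observes $H_m(\widetilde M)=0$ when $\widetilde M$ is noncompact (Lemma~\ref{lem:finite-pi-1}).

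For (3) your exhaustion-and-extension scheme is exactly the paper's, but you have the emphasis inverted on where the difficulty lies. Properness and local finiteness of the branch locus are essentially automatic once the induction is set up: each $E_j$ maps into $B_j$ and the branch locus is assembled piecewise, so the direct limit is a branched cover by construction. The genuine technical heart is the step you pass over as ``a relative form of Alexander's theorem'': given a compact oriented $3$-manifold $N$ with $\partial N=A\sqcup B$ and \emph{prescribed} simple $d$-fold covers $A\to S^2$, $B\to S^2$, one must extend to a simple branched cover $N\to S^2\times[-1,1]$ (Lemma~\ref{lem:extend-from-bdry}). This is not merely Hilden--Hirsch--Montesinos relative to the boundary; the crucial input is the Berstein--Edmonds theorem that any two simple $d$-fold covers $\Sigma\to S^2$ are equivalent for $d\ge3$, which is what allows the two halves of a compression-body decomposition of $N$ to be glued compatibly. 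Likewise, the sharp degree $mk$ is not obtained by generic stabilization but by first refining the exhaustion so that every component of $\overline{E_j\setminus E_{j-1}}$ has at most three boundary components (Proposition~\ref{prop:surfacestandardex}); the three-boundary pieces are exactly where ends split and where the extra sheets enter.
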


The classification of open universal bases among 2-- and 3--manifolds remains open.

\subsection*{Organization.}

In the remainder of Section \ref{sec:intro}, we discuss branched covering maps in more detail and give expanded statements of our main theorems (from which Theorem \ref{thm:summary} follows). 
Our results on closed manifolds are collected in Section~\ref{sec:closed}, and those on open manifolds 
in Section~\ref{sec:noncomp}. In Section~\ref{sec:open-qs}, we highlight some open problems and we construct explicit 2-- and 3--fold branched covers of the Whitehead manifold over $\mathbb{R}^3$.

\subsection{Definition and local description of a branched cover}

Our preferred definition of branched covers is in the PL category. Since our main results are in dimensions 2 and 3, where PL and smooth coincide, we generally do not distinguish between the two. 
For a brief discussion of the ways that a branched covering map can be defined in different contexts, we refer the reader to the appendix at the end of this paper. We use the following definition:

\begin{definition}\label{def:branchedcovering}
Given two (possibly noncompact, non-orientable) connected $m$--dimensional PL manifolds $M$ and $N$ with empty boundary, a nondegenerate map $f: N\to M$ is a {\it branched cover} if there is a nonempty codimension-2 subpolyhedron $B$ of $M$ such that the restriction of $f$ to a map $N\setminus f^{-1}(B)$ $\rightarrow M \setminus B$ is a $d$--fold covering map. We refer to $d$ as the degree, $B$ as the {\it branch locus}, and to $f^{-1}(B)$ as the  {\it branch set} of $f$.
If $M$ is compact, we require that $N$ is compact and hence $d$ is finite. When $M$ is noncompact, we also let $d=\aleph_0$ for countably infinite-sheeted covers. 
If $N$ is disconnected, a map $f: N\to M$ is a branched cover if its restriction to each component of $N$ meets the above criteria.
\end{definition}

Given $f, M, N, B$ and $d$ as above, Piergallini~\cite{piergallini1989manifolds} showed that for any point $b\in B$ there is a sufficiently small open ball $U$ around $b$ which can be parametrized as $U\cong \mathbb{R}^{m-2}\times\mathbb{C}$ such that on each component $U_i$ of $f^{-1}(U)$, the restriction $f|_{U_i}$ is either given by $(x,z)\mapsto (x,z^r)$ for some $r\in\mathbb{N}$ or is the cone on a $r$--sheeted branched covering of $S^{m-1}$ over $S^{m-1}$ (here, $m=\dim M=\dim B+2$). The integer $r$ is called the \emph{branching index} or \emph{local degree} of $b$ at $f^{-1}(b)\cap U_i$, and can vary with $i$. When $N$ is connected, the sum of local degrees at $b$ equals the total degree $d$ for all $b\in B$.

In the statements and proofs below, we will often focus on simple branched covering maps. A $d$--fold branched covering map $f$ is said to be {\emph{simple}}  if 
every point $b$ in the branch set has one preimage of index 2 and ($d-2$) preimages of index 1. Equivalently, the meridian of $b$ is mapped to a transposition in the symmetric group $S_d$ under the map $\pi_1(M\backslash B)\to S_d$ which determines the cover. In dimensions 2 and 3 a branched covering map is generically simple~\cite[Proposition~3.3 and Theorem~6.5]{bersteinontheconstruction}, and simple covers form an open set in the space of branched covers between compact $m$-manifolds for all $m$~\cite[Proposition~3.1]{bersteinontheconstruction}. When possible in our constructions we will explicitly produce simple covering maps. 

\subsection{Main results}

It is well-known and easy to show that $S^2$ is a universal $2$--base. In fact, $S^2$ is the only universal base among closed surfaces: see 
Lemma~\ref{lemma:betti} and Proposition~\ref{cor:surfaces-nonuni}.  We study the case of noncompact orientable surfaces in Section~\ref{sec:surfaces}, where we prove that $\mathbb{R}^2$ is a universal $2k$--base for orientable surfaces with $k\in \mathbb{N}$ ends (Theorem~\ref{thm:surfacenoncompact}), and a universal $\aleph_0$--base among all open surfaces. 

As noted earlier, by a famous result of Hilden, Hirsch, and Montesinos, $S^3$ is a universal 3--base~\cite{hilden1974every, hirsch1974offene, montesinos1974representation}, while $S^4$ is a universal 4--base \cite{piergallini1995four}. As covers over spheres may be stabilized in degree, these results also imply that $S^3$ is a universal $n$--base for all $n\ge 3$, while $S^4$ is a universal $n$--base for all $n\ge 4$.

\begin{restatable}{theorem}{noncompact}
\label{thm:noncompact} 
Let $N$ be an orientable open 3--manifold with $k$ ends. Then $N$ admits a simple $n$--fold branched covering map over $\mathbb{R}^3$, with $n=\min\{3k,\aleph_0\}$. In other words, $\mathbb{R}^3$ is a universal $3k$--base among open manifolds with $k$ ends, and is a universal $\aleph_0$--base among open manifolds with infinitely many ends.
\end{restatable}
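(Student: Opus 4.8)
The plan is to decompose $N$ along a properly embedded collection of separating surfaces into an exhausting sequence of compact pieces, branch-cover each piece over a ball (or an annular region $S^2 \times I$), and then glue the covers together end-to-end. Concretely, start with the case $k < \infty$. Using the fact that $N$ has $k$ ends, choose a proper Morse function (or handle decomposition) realizing $N$ as an increasing union $N = \bigcup_{j\geq 0} N_j$, where $N_0$ is compact with $k$ boundary spheres and each $N_{j+1} \setminus \mathrm{int}(N_j)$ is a compact cobordism $W_j$ with $\partial_- W_j$ and $\partial_+ W_j$ each a disjoint union of $k$ spheres (after a preliminary step that replaces the boundary components of the exhaustion by spheres — this uses that each end of an orientable $3$--manifold is "eventually" carried by a $2$--sphere only after possibly re-choosing the exhaustion; in general one works instead with $k$ ends each having a neighborhood that is an infinite connected sum, so the relevant building block is a punctured compact $3$--manifold). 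The target $\mathbb{R}^3$ is correspondingly written as $\mathbb{R}^3 = B^3 \cup \big(S^2\times[0,\infty)\big)$, and we will build $f$ so that $f^{-1}(S^2\times\{t\})$ is, for generic $t$, a disjoint union of $k$ spheres mapping as the standard $3$--fold simple branched cover $S^2 \to S^2$ (branched over $2$ points, or more, in $S^2 \times \{t\}$). The degree is then $3k$: each of the $k$ ends contributes a degree-$3$ "tube."

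The key steps, in order: (1) Reduce to a normal form for the ends: after modifying the exhaustion, arrange that $N_0$ has $k$ spherical boundary components and each cobordism $W_j$ is built from a compact $3$--manifold with two $k$-sphere boundary families; invoke that any compact connected orientable $3$--manifold with nonempty boundary admits a simple $3$--fold branched cover over $B^3$ (this is the relative/bounded version of Hilden–Hirsch–Montesinos, and one must control the branched cover on the boundary spheres so it is the standard one). (2) Build the cover over $N_0$: realize $N_0$ as a simple $3k$--fold branched cover of $B^3$, where $B^3$ is covered with $k$ "parallel" copies of the standard degree-$3$ branched cover arranged in disjoint sub-balls, or more precisely so that $\partial B^3 = S^2$ is covered by $k$ spheres each via the standard degree-$3$ map. (3) Inductively extend across each $W_j$: given that $f$ is already defined on $N_j$ restricting to the standard cover on $\partial N_j = k\cdot S^2$, cover the collar region $S^2 \times [j, j+1] \subset \mathbb{R}^3$ by $W_j$ so that it agrees with the previous stage on $\partial_-$ and is again standard on $\partial_+$; this again is the bounded branched-cover existence theorem, now for a cobordism, with prescribed boundary behavior. (4) Take the union: the maps glue to a proper map $f : N \to \mathbb{R}^3$ which is a simple $3k$--fold branched cover, since properness follows from the exhaustion being compatible on both sides and the branch locus is the union of the (locally finite) branch loci of the pieces. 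For the case $k = \infty$ (or to get the $\aleph_0$ statement unconditionally), use instead a locally finite exhaustion of $N$ whose pieces each have finitely many spherical boundary components, covering larger and larger pieces of $\mathbb{R}^3$ by ever-more sheets; the degree over a fixed compact set is finite but grows without bound, giving a countably-infinite-sheeted cover.

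The main obstacle I expect is Step (1) together with the boundary-controlled version of Step (3): one needs a version of the Hilden–Hirsch–Montesinos theorem for compact $3$--manifolds \emph{with boundary}, producing a simple $3$--fold (or $3k$--fold) branched cover over $B^3$ (resp.\ $S^2 \times I$) whose restriction to each boundary sphere is a \emph{prescribed} standard branched cover, and moreover matching a given cover on $\partial_-$. Getting the branch locus to meet the boundary spheres in a controlled way — so that consecutive cobordism-covers glue to a genuine branched cover rather than merely a branched cover away from the gluing spheres — is the delicate point; one likely pushes all branching into the interior of each piece near the gluing region, or arranges the boundary branch points to be "static" (product-like) in a collar. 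A secondary subtlety is handling non-spherical ends: if the ends of $N$ are not eventually tubular, one cannot literally use $S^2$ cross-sections, and instead must work with an exhaustion by compact manifolds with higher-genus or disconnected boundary, branch-covering $\mathbb{R}^3$ in a correspondingly more intricate way (e.g.\ covering $B^3$ by a piece whose boundary is the prescribed surface via a branched cover $\Sigma \to S^2$), so some care is needed to see $3k$ still suffices and that the gluings remain compatible.
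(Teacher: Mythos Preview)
Your overall architecture---exhaust $N$ and $\mathbb{R}^3$ compatibly, branch-cover each piece over a ball or $S^2\times I$, and glue---is exactly what the paper does, and you correctly flag the boundary-controlled extension as the crux. The paper's Lemma~\ref{lem:extend-from-bdry} is precisely that ingredient: given a compact oriented $3$--manifold with boundary $A\sqcup B$ and \emph{arbitrary} prescribed simple $d$--fold covers $f_a:A\to S^2$, $f_b:B\to S^2$ ($d\ge 3$), these extend to a simple branched cover over $S^2\times[-1,1]$. Its proof uses a compression-body decomposition of the cobordism together with the Berstein--Edmonds theorems that any two simple degree-$d$ branched covers $\Sigma\to S^2$ are equivalent, and that the equivalence is realized by homeomorphisms isotopic to the identity.

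There are, however, two genuine problems with your setup. First, the reduction in Step~(1) to spherical cross-sections simply fails: a generic open $3$--manifold (e.g.\ the Whitehead manifold, or indeed any contractible open $3$--manifold other than $\mathbb{R}^3$) admits no exhaustion by compacta with $2$--sphere boundaries, and its ends are not ``infinite connected sums.'' You flag this at the end as a ``secondary subtlety,'' but it is primary: the argument must be run with arbitrary-genus boundary surfaces from the outset, and the extension lemma must hold in that generality (as it does). Second, your picture of the degree as ``$k$ parallel degree-$3$ tubes'' breaks down when $k$ is infinite, since one cannot separate off all the ends at a finite stage. The paper handles finite and infinite $k$ uniformly via Proposition~\ref{prop:surfacestandardex}: the exhaustion is arranged so that each component of $\overline{E_j\setminus E_{j-1}}$ has either two or three boundary components; a two-boundary piece covers the shell $\overline{B_j\setminus B_{j-1}}$ with degree~$3$, while each three-boundary ``pants'' piece $P$ is handled by placing three small balls in $\mathrm{int}\,P$ that each map homeomorphically onto $B_{j-1}$ and then applying Lemma~\ref{lem:extend-from-bdry} to $P$ minus those balls over $\overline{B_j\setminus B_{j-1}}$. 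Each pants piece thus adds exactly three sheets over $B_1$, and since there are $k-1$ of them (resp.\ $\aleph_0$), the total degree is $3k$ (resp.\ $\aleph_0$).
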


 One approach to proving Theorem \ref{thm:noncompact} would be the following. First, apply work of Montesinos \cite{montesinos2002representing}, in which he shows that every planar 3--manifold $N$, i.e.\ $S^3$ with a nonempty subset $E$ of the Cantor set deleted, is a universal 3--base among open 3--manifolds with end space homeomorphic to $E$. 
 This means that $N$ admits a degree--3 branched covering map over such a manifold. Then, to prove that $\mathbb{R}^3$ is universal, it would suffice to show that all planar 3--manifolds are branched covers of it.

 Our approach differs fundamentally from that in \cite{montesinos2002representing}, which relies on abstract principles to extend branched covering maps of open manifolds over their compactifications. The arguments in the current paper are constructive, relying only on the existence of a compact exhaustion of an open manifold. When we need exhaustions with certain properties, we will describe how to achieve them. One advantage is that this strategy can be tractable for producing examples --- for instance, we give the explicit branch locus for 2-- and 3--fold branched coverings of the Whitehead manifold over $\mathbb{R}^3$ in Example~\ref{ex:whitehead}.

\begin{restatable}{corollary}{rthreeuniversal}
\label{rthreeuniversal}
  The space $\mathbb{R}^3$ is a universal $\aleph_0$--base.
\end{restatable}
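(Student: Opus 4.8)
The plan is to split the argument according to whether $N$ has finitely or infinitely many ends, reducing the first case to the second. First note that any open $3$--manifold admitting a branched covering map over $\mathbb{R}^3$ is automatically orientable: the branch set has codimension two, so the inclusion into $N$ of the complement of the branch set is $\pi_1$--surjective, and that complement pulls back an orientation of $\mathbb{R}^3$ through the covering map, whence $w_1(N)=0$. So it suffices to treat connected orientable open $3$--manifolds $N$. If $N$ has infinitely many ends, then the last sentence of Theorem~\ref{thm:noncompact} already provides an $\aleph_0$--fold branched covering map $N\to\mathbb{R}^3$, and there is nothing more to do.

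So suppose $N$ has exactly $k$ ends with $k$ finite. Fix a countably infinite closed discrete subset $D=\{d_1,d_2,\dots\}\subset N$ and set $N'=N\setminus D$. Then $N'$ is connected, orientable and open, and now has infinitely many ends, so Theorem~\ref{thm:noncompact} gives a simple $\aleph_0$--fold branched covering map $c\colon N'\to\mathbb{R}^3$. The proof of Theorem~\ref{thm:noncompact} is constructive, proceeding along a compact exhaustion; I would run it with the points $d_i$ singled out as distinguished points of that exhaustion so that $c$ comes out \emph{standard near each puncture end}, meaning: for every $i$ there is a ball neighbourhood $B_i$ of $d_i$ in $N$ such that the branch set of $c$ misses $B_i\setminus\{d_i\}$ and $c$ restricts to a homeomorphism $B_i\setminus\{d_i\}\xrightarrow{\ \sim\ }V_i\setminus\{q_i\}$, where $V_i\subset\mathbb{R}^3$ is a ball centred at a point $q_i$ not on the branch locus $\Delta$ of $c$.

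Granting such a $c$, extend it to $\overline c\colon N\to\mathbb{R}^3$ by setting $\overline c(d_i)=q_i$. Near each $d_i$ this is a homeomorphism of $B_i$ onto $V_i$, so $\overline c$ is continuous and nondegenerate; over $\mathbb{R}^3\setminus\Delta$ it is just the covering map $c$ with one further trivial sheet inserted over a small ball about each $q_i$, hence still an $\aleph_0$--sheeted covering map; and near $\Delta$ it is unchanged. Thus $\overline c$ is a (simple) $\aleph_0$--fold branched covering map $N\to\mathbb{R}^3$ whose branch locus is the closed codimension--two subpolyhedron $\Delta$, and so $\mathbb{R}^3$ is a universal $\aleph_0$--base. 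The one step I expect to require genuine care is the middle one --- extracting from the constructive proof of Theorem~\ref{thm:noncompact} a cover of $N'$ that is a trivial product in a small ball around each added puncture --- but this should be a mild matter, since keeping a trivial sheet over a distinguished ball is exactly the sort of local bookkeeping the exhaustion argument already supports; once it is in place, the fill-in introduces no new branching and leaves the branch locus a genuine subpolyhedron, which is what makes $\overline c$ a branched cover in the sense of Definition~\ref{def:branchedcovering}. (By contrast, the naive alternative of postcomposing a $3k$--fold cover $N\to\mathbb{R}^3$ with an $\aleph_0$--fold self-cover of $\mathbb{R}^3$ runs into trouble, because the image of the noncompact branch locus of the first map under the non-proper second map need not be a subpolyhedron.)
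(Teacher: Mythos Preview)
Your approach diverges from the paper's in an interesting way: the paper does precisely the ``naive alternative'' you dismiss, composing the $3k$--fold cover $f:N\to\mathbb{R}^3$ from Theorem~\ref{thm:noncompact} with the $\aleph_0$--fold self-cover $g:\mathbb{R}^3\to\mathbb{R}^3$ of Proposition~\ref{prop:r2coveringr2} (after arranging the branch set of $g$ to miss the branch locus of $f$). Your worry that $g(B_f)$ need not be a subpolyhedron is a legitimate technical point, but it is not fatal: one may either arrange the noncompact strands of $B_f$ to run off to infinity in the $\mathbb{R}$--factor of $g=h\times\mathrm{id}_{\mathbb{R}}$ (so $g$ is proper on them), or simply note that $g\circ f$ visibly satisfies the Fox-type Definition~\ref{def:branchedcovering2} from the appendix (surjective, open, discrete fibres, and $f$ proper composed with $g$ having compact preimage-components), which in dimension~3 is equivalent to Definition~\ref{def:branchedcovering}.

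Your own route, by contrast, has a genuine gap at exactly the point you flag. The map $c:N'\to\mathbb{R}^3$ you describe cannot be a branched cover in the sense of either definition: if $c$ carries $B_i\setminus\{d_i\}$ homeomorphically onto $V_i\setminus\{q_i\}$ with $q_i\notin\Delta$, then $c$ is not evenly covered over any neighbourhood of $q_i$ (one ``sheet'' misses $q_i$), so $q_i$ must lie in the branch locus --- but a single point is codimension~3, not~2. Equivalently, the component of $c^{-1}(\overline{V_i})$ containing this sheet is the noncompact set $B_i\setminus\{d_i\}$, violating the compact-components condition. So Theorem~\ref{thm:noncompact} cannot produce such a $c$, and ``running the proof with distinguished points'' does not help: in that construction every end of the domain is sent to the single end of $\mathbb{R}^3$, never to a bounded ball. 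What you would actually need is to build $\overline{c}:N\to\mathbb{R}^3$ directly by inserting, at each stage of the exhaustion of $N$, extra trivially-covered balls $D_j\hookrightarrow\overline{E_j\setminus E_{j-1}}$ mapped homeomorphically to $B_{j-1}$ (as in the type--$P$ case of the proof of Theorem~\ref{thm:noncompact}). That works and avoids the composition entirely --- but it is a modification of the proof of Theorem~\ref{thm:noncompact}, not an application of its statement to $N'$.
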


Note Corollary \ref{rthreeuniversal} realizes open 3--manifolds with a potentially uncountable number of ends as $\aleph_0$ covers of $\mathbb{R}^3$. 
We also prove the analogue of Theorem \ref{thm:noncompact} in dimension two. 

\begin{restatable}{theorem}{surfacenoncompact}\label{thm:surfacenoncompact}
Let $\Sigma^2$ be an open surface with $k$ ends. Then, there exists  a simple $n$--fold branched covering map $\Sigma^2\to\mathbb{R}^2$, with $n=\min\{2k,\aleph_0\}$.
\end{restatable}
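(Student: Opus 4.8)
The plan is to realize $f$ by decomposing the base $\mathbb{R}^2$ into a closed disk together with a nested sequence of collar annuli and assembling the cover piece by piece. Write $\mathbb{R}^2=D_0\cup A_0\cup A_1\cup\cdots$, where $D_0$ is a round disk and each $A_i=D_{i+1}\setminus\operatorname{int}D_i$ is an annulus, so that $D_i=D_0\cup A_0\cup\cdots\cup A_{i-1}$ exhausts $\mathbb{R}^2$. (Any branched cover of $\mathbb{R}^2$ is orientable, so I may assume $\Sigma$ is.) Every boundary circle that appears in the construction will double-cover some $\partial D_i$ via the connected double cover $S^1\to S^1$, which is unique up to isomorphism, so the pieces will always glue along their boundaries.

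For $k$ finite, with $n=2k$: first use the classification of open surfaces to pick a compatible exhaustion $\Sigma=\Sigma_0\cup W_0\cup W_1\cup\cdots$ in which $\Sigma_0$ is a connected compact subsurface with exactly $k$ boundary circles, one per end, and $W_i=\bigsqcup_{e=1}^{k}W_{i,e}$ with each $W_{i,e}$ a compact cobordism having one incoming and one outgoing circle and genus $g_{i,e}\ge 0$; all finite genus of $\Sigma$ is pushed into $\Sigma_0$, while a non-planar end carries its infinite genus through $\sum_i g_{i,e}=\infty$. Over $D_0$, take a simple $2k$-fold branched cover $\Sigma_0\to D_0$ whose monodromy $\pi_1(D_0\setminus B_0)\to S_{2k}$ sends each meridian to a transposition and the boundary loop to $\prod_{e=1}^{k}(2e{-}1\ 2e)$, has transitive image, and has exactly $p_0=3k+2g_0-2$ branch points, so that by Riemann--Hurwitz the total space is the prescribed $\Sigma_0$ and its $k$ boundary circles are double-covered, with end $e$ assigned to the sheet pair $\{2e{-}1,2e\}$. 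Over each $A_i$, take $f^{-1}(A_i)=\bigsqcup_e W_{i,e}$ with $W_{i,e}\to A_i$ a degree-$2$ branched cover on the sheet pair $\{2e{-}1,2e\}$: the trivial connected double cover of the annulus when $g_{i,e}=0$, and the composite of $g_{i,e}$ copies of the genus-stabilizing degree-$2$ cover of the annulus (two simple branch points per handle, monodromy in $S_2$) otherwise. These all restrict on their boundaries to double covers on the designated sheet pairs, so they glue to a simple $2k$-fold branched cover $f\colon\Sigma\to\mathbb{R}^2$; since $f^{-1}(D_i)=\Sigma_0\cup W_0\cup\cdots\cup W_{i-1}$ is compact, $f$ is proper, and its total space is $\Sigma$ by construction.

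The only genuine inputs for the finite case are two elementary building-block lemmas: that a prescribed compact orientable surface with prescribed double-covered boundary occurs as a simple branched cover of a disk (resp.\ of an annulus) in the stated degree. These reduce to writing down a tuple of transpositions in a symmetric group whose product is a prescribed fixed-point-free involution (or the assigned cycle type on the boundary), which generates a transitive subgroup, and whose length is the value forced by $\chi(\mathrm{cover})=n\cdot\chi(\mathrm{base})-\#\{\text{branch points}\}$; the parity works out automatically, there is always enough length to achieve transitivity, and one can produce such tuples explicitly by linking the sheet pairs with a chain of transpositions and appending cancelling and genus-creating pairs.

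The infinite case $n=\aleph_0$ is where the real work lies. Over the connected base $\mathbb{R}^2$ the degree is constant, so $f$ has infinite degree everywhere and no $f^{-1}(D_i)$ is compact; the right way to keep each finite stage honest is to also \emph{puncture} $D_i$ at the locations of branch points to be introduced later, obtaining at stage $i$ a genuine finite-degree simple branched cover of $D_i$ minus small disks around those future branch points, with degree tending to $\aleph_0$ and with $\partial\Sigma_i$ mapping onto the puncture circles. The obstacle is the bookkeeping: one must choose a discrete branch locus $B\subset\mathbb{R}^2$ escaping to infinity and choose the monodromies inductively so that (i) the merging of sheets across successive stages eventually exhausts all sheets, making the total space connected (as already happens in the model $\rho(\mu_j)=(j\ j{+}1)$), and (ii) the space of ends of the total space is exactly the prescribed end space of $\Sigma$ --- an arbitrary compact, totally disconnected, possibly uncountable space. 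I would handle (ii) by encoding that end space as the boundary of a pruned subtree of the binary tree and arranging the combinatorics of $B$ and of the fibers along $A_0,A_1,\dots$ to realize that tree, splitting sheet-tubes to create ends and capping off the pruned branches, while inserting genus-stabilizing gadgets along the branches corresponding to non-planar ends. Checking that this can be carried out with only simple branching and that it produces $\Sigma$ itself --- not merely some open surface with the same end space and genus --- is the crux; the gluing, the deliberate failure of properness, and the degree count are then routine.
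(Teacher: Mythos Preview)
Your finite-$k$ argument is correct but takes a different route from the paper. You front-load the end structure into a single compact piece $\Sigma_0$ with $k$ boundary circles, realize it as a simple $2k$-fold cover of a disk via a Hurwitz-type existence statement for transposition tuples, and then extend outward by independent degree-$2$ tubes along each end. The paper instead starts from a \emph{disk} $E_1$ and uses an exhaustion (its Proposition on standard exhaustions) in which end-splitting is deferred to later ``trinion'' pieces $P\subset\overline{E_j\setminus E_{j-1}}$ having three boundary components. Your version is tidy for finite $k$ and the building-block lemmas really are elementary, but it leans on the classification of open surfaces to manufacture $\Sigma_0$; the paper's exhaustion is obtained by elementary surgery on an arbitrary compact exhaustion and needs no classification.

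The infinite-$k$ case in your proposal is, as you yourself flag, not a proof: you correctly diagnose that the degree must be infinite everywhere and that the end space must be encoded combinatorially, but you stop at ``this is the crux.'' The paper's device, which makes the infinite case no harder than the finite one, is to let each three-boundary piece $P$ cover not the annulus $\overline{D_j\setminus D_{j-1}}$ but the \emph{entire disk} $D_j$: over $D_j\setminus D_{j-1}$ a generic point has four preimages in $P$, over $D_{j-1}$ only two, with $2g(P)+3$ simple branch points placed in $D_j\setminus D_{j-1}$. This single gadget handles end-splitting and contributes exactly two global sheets per $P$, giving total degree $2+2|\mathcal P|=\min\{2k,\aleph_0\}$ uniformly. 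Because the map is built directly on the given exhaustion of $\Sigma$, your worry about verifying that the total space is $\Sigma$ itself, rather than merely some surface with the same invariants, never arises.
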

Piergallini--Zuddas \cite[Theorem 1.8]{piergallinizuddas_open} show that the analogous statement also holds in dimension~4.

\begin{restatable}{corollary}{rtwouniversal} \label{r2--universal}
    The plane $\mathbb{R}^2$ is a universal $\aleph_0$--base.
\end{restatable}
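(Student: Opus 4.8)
The plan is to derive Corollary~\ref{r2--universal} from Theorem~\ref{thm:surfacenoncompact}, together with one additional idea that upgrades a finite-degree cover to an infinite one without changing the total space. By Definition~\ref{def:uni--base} we must exhibit, for every open surface $N$, a countably infinite-sheeted branched cover $N\to\mathbb{R}^2$. If $N$ has $k$ ends with $k$ infinite (countable or not), then $\min\{2k,\aleph_0\}=\aleph_0$, so Theorem~\ref{thm:surfacenoncompact} already provides the desired map: the exhaustion argument behind that theorem requires only a compact exhaustion whose complementary regions have finitely many components at each stage, which every open surface admits, so it is insensitive to the cardinality of the end space. Thus it remains to handle open surfaces with \emph{finitely many} ends, for which Theorem~\ref{thm:surfacenoncompact} produces a branched cover of finite degree $2k$.

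The key point for this case is that the degree of a branched cover of $\mathbb{R}^2$ may be multiplied by precomposing on the \emph{base} with a self-cover, which leaves the total space untouched. First I would build a simple $\aleph_0$-sheeted branched covering $h\colon\mathbb{R}^2\to\mathbb{R}^2$: take the connected branched cover of $\mathbb{R}^2$ with branch locus a sequence of points $q_1,q_2,\dots$ escaping to infinity and monodromy $\pi_1(\mathbb{R}^2\setminus\{q_i\})=\langle x_1,x_2,\dots\rangle\to\mathrm{Sym}(\mathbb{N})$ sending $x_i\mapsto(i,\,i{+}1)$; the image is transitive, so the cover is connected, and restricting over an increasing exhaustion of $\mathbb{R}^2$ by disks one sees that the preimage of each disk is a disk (plus trivial sheets that get absorbed at later stages), whence the total space is a simply connected noncompact surface, i.e.\ $\mathbb{R}^2$. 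Now let $N$ have finitely many ends and let $g\colon N\to\mathbb{R}^2$ be the simple $2k$-fold branched cover of Theorem~\ref{thm:surfacenoncompact}, with discrete branch locus $B_g$. After an ambient homeomorphism I would move $B_g$ off the branch set of $h$, arrange that $h|_{B_g}$ is injective, and place $B_g$ along a proper ray obtained by lifting through $h$ a proper ray in the target that avoids the $q_i$ — such a lift is automatically proper (a compact set containing its image would have compact image under $h$ containing the target ray, a contradiction), so $h(B_g)$ is discrete. Then $h\circ g\colon N\to\mathbb{R}^2$ is an $\aleph_0$-fold branched cover with branch locus $\{q_i\}\cup h(B_g)$: over the complement of this discrete set it is an unbranched $2k\cdot\aleph_0=\aleph_0$-sheeted covering, and at each branch point its local model is a composite of standard local models, hence again standard (and in fact simple, by the general-position choices). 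Since the total space of $h\circ g$ is $N$, this completes the argument.

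The step I expect to be the main obstacle is checking that $h\circ g$ really meets Definition~\ref{def:branchedcovering} when the intermediate space is noncompact and the degrees are infinite — in particular, controlling the (necessarily non-proper) map $h$ so that the image $h(B_g)$ of the branch locus remains discrete and does not accumulate on the $q_i$; this is exactly what forces the careful placement of $B_g$, and everything else is bookkeeping. A secondary point, which I would address by appeal to the constructive nature of Theorem~\ref{thm:surfacenoncompact}, is the claim that its exhaustion argument genuinely covers surfaces with uncountably many ends: here one uses that the exhausting compacta can be chosen so that their complementary components refine to any prescribed end space, which is precisely the flexibility afforded by an argument that does not rely on compactification.
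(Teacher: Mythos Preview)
Your argument follows essentially the same route as the paper: invoke Theorem~\ref{thm:surfacenoncompact} directly for surfaces with infinitely many ends, and for finitely-ended surfaces compose the resulting finite-degree cover with the $\aleph_0$-fold self-cover of $\mathbb{R}^2$ given by the monodromy $x_i\mapsto(i,\,i{+}1)$, which is exactly the paper's Proposition~\ref{prop:r2coveringr2}. Your extra care in placing the finite cover's branch locus along a lifted proper ray so that its image under the (non-proper) map $h$ remains discrete addresses a technical point the paper passes over in a single line.
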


\section{Closed universal $n$--bases} \label{sec:closed}

We establish some restrictions on the algebraic topology of closed universal $n$--bases. 
We then consider non-orientable universal bases, contrasting even and odd dimensions. We also prove Proposition~\ref{prop-compact}, which states that  a closed 3--manifold is a universal $n$--base (for some $n$) if and only if its fundamental group is finite.

\begin{lemma}\label{lemma:betti}
If either $S^m$ or $\mathbb{R}^m$ is a finite--fold branched cover of a manifold $M^m$, then 
$b_i(M)=0$ for all $0<i<m$. In particular, if  $M^m$ is a universal $n$--base for some $n$, then we must have $b_i(M)=0$ for all $0<i<m$. 
\end{lemma}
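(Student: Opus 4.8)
The plan is to exploit the transfer map in rational (co)homology. Suppose $f\colon \widetilde M \to M$ is a branched covering map of degree $d$, where $\widetilde M$ is either $S^m$ or $\mathbb{R}^m$, and let $B \subset M$ be the branch locus with branch set $\widetilde B = f^{-1}(B)$. Away from the branch locus, $f$ restricts to an honest $d$--fold covering map $f_0\colon \widetilde M \setminus \widetilde B \to M \setminus B$, so there is a transfer homomorphism $\tau\colon H_*(M\setminus B;\mathbb{Q}) \to H_*(\widetilde M \setminus \widetilde B;\mathbb{Q})$ with $(f_0)_* \circ \tau = d\cdot\mathrm{id}$. Since $d$ is a nonzero integer and we are working over $\mathbb{Q}$, this shows $(f_0)_*$ is surjective, hence $H_i(M\setminus B;\mathbb{Q})$ is a quotient of $H_i(\widetilde M\setminus\widetilde B;\mathbb{Q})$ for every $i$. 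The first step, then, is to set up this transfer argument carefully in the PL branched-cover setting (it is cleanest to work with the unbranched restriction and then compare with $M$ itself).

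The second step is to pass from $M \setminus B$ back to $M$. Because $B$ has codimension $2$ in $M$, the inclusion $M\setminus B \hookrightarrow M$ induces a surjection on $H_i(-;\mathbb{Q})$ for $i < m-1$ — indeed, removing a codimension-$2$ subpolyhedron cannot kill homology classes in degrees below $m-1$ (a general-position/transversality argument, or a Mayer--Vietoris/Thom-isomorphism computation with a regular neighborhood of $B$, shows the relative groups $H_i(M, M\setminus B;\mathbb{Q})$ vanish for $i \le m-1$). Composing, we get that $H_i(M;\mathbb{Q})$ is a quotient of $H_i(\widetilde M \setminus \widetilde B;\mathbb{Q})$ for all $i < m-1$. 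The third step is to bound the homology of $\widetilde M \setminus \widetilde B$. When $\widetilde M = \mathbb{R}^m$ or $S^m$ and $\widetilde B$ is a codimension-$2$ subpolyhedron, Alexander duality gives $\widetilde H_i(\widetilde M\setminus \widetilde B;\mathbb{Q}) \cong \widetilde H^{\,c}_{\,?}(\widetilde B;\mathbb{Q})$ in the appropriate degrees; the point is that $\widetilde B$, being at most $(m-2)$--dimensional, has reduced (compactly supported, in the $\mathbb{R}^m$ case) cohomology concentrated in degrees $\le m-2$, which forces $H_i(\widetilde M\setminus\widetilde B;\mathbb{Q}) = 0$ for $1 \le i \le m-2$. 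One must be slightly careful about the non-compact case $\widetilde M = \mathbb{R}^m$ and about $\widetilde B$ possibly being non-compact, but the branch set is locally finite and the duality still delivers vanishing in the middle range; alternatively, one can argue directly that a codimension-$\ge 2$ subpolyhedron removed from $\mathbb{R}^m$ leaves a space that is $(m-2)$--connected-in-homology over $\mathbb{Q}$ by a Lefschetz-duality or handle-by-handle argument.

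Combining the three steps, $b_i(M) = \dim_\mathbb{Q} H_i(M;\mathbb{Q}) = 0$ for all $i$ with $0 < i < m-1$. It remains to handle the top-middle degree $i = m-1$ (when $m \ge 3$; for $m=2$ there is nothing more to check). Here I would use Poincaré duality on $M$: if $M$ is closed and orientable over $\mathbb{Q}$, then $b_{m-1}(M) = b_1(M) = 0$ by what we already proved, so we are done; if $M$ is non-orientable, one works with $\mathbb{Z}/2$ coefficients throughout (the transfer argument, the codimension-$2$ excision, and Alexander duality all go through with $\mathbb{Z}/2$, at the cost of controlling only $\mathbb{Z}/2$-Betti numbers, which still bounds the rational ones via the universal coefficient theorem in the relevant range) and applies $\mathbb{Z}/2$-Poincaré duality, or one simply restricts the final Poincaré-duality step to the orientable/closed case and notes that in the open case $M\cong\mathbb{R}^m$ a separate (easier) argument applies. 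The last sentence of the lemma is then immediate: a universal $n$--base for some $n$ is in particular branch-covered by $S^m$, so the first part applies.

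The main obstacle I anticipate is the bookkeeping in Step 3 and the $i=m-1$ case when $M$ is non-compact or non-orientable: making Alexander/Lefschetz duality precise for a possibly non-compact, possibly non-PL-collared codimension-$2$ branch set inside $\mathbb{R}^m$, and then cleanly reducing the middle-degree Betti number via the appropriate duality. Everything else — the transfer map and the codimension-$2$ excision — is standard and robust.
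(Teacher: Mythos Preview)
Your Step~3 contains a fatal error: Alexander duality goes in the wrong direction for your purposes. For $\widetilde M=S^m$ and a compact codimension-$2$ subpolyhedron $\widetilde B$, Alexander duality reads $\widetilde H_i(S^m\setminus\widetilde B;\mathbb{Q})\cong \check H^{\,m-1-i}(\widetilde B;\mathbb{Q})$. The fact that $\dim\widetilde B\le m-2$ forces this to vanish only when $m-1-i>m-2$, i.e.\ for $i<1$, \emph{not} for $1\le i\le m-2$ as you claim. Concretely: a knot complement in $S^3$ has $H_1=\mathbb{Z}$ (here $i=1=m-2$), and a genus-$g$ surface complement in $S^4$ has $H_1\cong\mathbb{Z}$ and $H_2\cong\mathbb{Z}^{2g}$. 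Step~2 fails for the same reason: removing a codimension-$2$ set from $M$ typically \emph{creates} homology rather than preserving it. For instance, with $M=\mathbb{CP}^2$ and $B=\mathbb{CP}^1$ (so $m=4$, $i=2<m-1$) one has $H_2(M\setminus B;\mathbb{Q})=H_2(\mathbb{C}^2;\mathbb{Q})=0$, so $H_2(M\setminus B)\to H_2(M)\cong\mathbb{Q}$ is certainly not surjective. The Thom isomorphism $H_i(M,M\setminus B)\cong H_{i-2}(B)$ makes clear that these relative groups are nonzero throughout the range $2\le i\le m$.

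The transfer idea is morally correct, but it must be run on the \emph{branched} map $f$ itself rather than on its unbranched restriction; passing to complements throws away exactly the control you need. The paper does this directly at the chain level: given an infinite-order class $\alpha\in H_j(M;\mathbb{Z})$ with $0<j<m$, represent a multiple of $\alpha$ by an embedded $j$-submanifold $W$ (Thom), isotope $W$ transverse to the branch locus, and set $\widehat W=f^{-1}(W)$. Then $\widehat W$ is a compact $j$-cycle in $\widetilde M$ and $f_*[\widehat W]=d\,[W]$. Since $H_j(\widetilde M;\mathbb{Z})=0$, a $(j{+}1)$-chain bounds $\widehat W$; pushing it forward by $f$ bounds $dW$ in $M$, contradicting infinite order. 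This is precisely a ``branched transfer'' and requires no duality bookkeeping, no separate treatment of $i=m-1$, and no orientability hypotheses on $M$.
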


\begin{proof}
    Suppose there is a finite--fold covering $f:X\to M$ of degree $d$ where $X$ is either $S^m$ or $\mathbb{R}^m$, and that
    $b_j(M)>0$ for some $0<j<m$. Let $\alpha\in H_j(M;\mathbb{Z})$ be a class of infinite order. After possibly replacing $\alpha$ by a multiple of $\alpha$, the class $\alpha$ can be represented by a $j$--dimensional submanifold $W$ of $M$, by \cite[\S2 Theorem II.29]{novikov2007topological} (a translation of~\cite{thom1954quelques,  thom1, thom2}; see also~\cite{sullivan}). Isotope $W$ to be transverse to the branch locus of $f$, and let $\widehat{W}$ be the preimage of $W$ under $f$. Since $d$ is finite, $\widehat{W}$ is a compact submanifold of $X$ and $f|_{\widehat{W}}:\widehat{W}\to W$ is a $d$--fold branched cover. 
    Since $0<j<m$, we have $H_j(X;\mathbb{Z})=0$. So, there is a $(j+1)$--chain $\rho:\sqcup\Delta^{j+1}\to X$ whose boundary is $\widehat{W}$. Then $f\circ \rho:\sqcup\Delta^{j+1}\to M$ has boundary $dW$, contradicting the fact that $c \alpha\neq 0$ for all $c\neq 0$. Hence, $b_i(M)=0$ for $0<i<m$. 
\end{proof}

\begin{corollary}\label{cor:nonorientodd}
There are no closed non-orientable universal $n$-bases for any $n \in \mathbb{N}$.
\end{corollary}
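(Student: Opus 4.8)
The plan is to combine the Betti number obstruction of Lemma~\ref{lemma:betti} with the vanishing of the Euler characteristic in odd dimensions. Suppose toward a contradiction that $M^m$ is a closed nonorientable $m$--manifold with $m$ odd which is a universal $n$--base for some $n\in\mathbb{N}$. Being a universal base, $M$ is in particular connected, so $b_0(M)=1$, and by Lemma~\ref{lemma:betti} we get $b_i(M)=0$ for all $0<i<m$. Since $M$ is closed and nonorientable, $H_m(M;\mathbb{Z})=0$, so $b_m(M)=0$ as well. Therefore $\chi(M)=\sum_{i=0}^{m}(-1)^i b_i(M)=1$.

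On the other hand, every closed $m$--manifold with $m$ odd has $\chi(M)=0$: computing the Euler characteristic with $\mathbb{Z}/2$ coefficients, Poincaré duality (which holds for nonorientable manifolds over $\mathbb{Z}/2$) gives $\dim_{\mathbb{Z}/2} H_i(M;\mathbb{Z}/2)=\dim_{\mathbb{Z}/2} H_{m-i}(M;\mathbb{Z}/2)$, and for $m$ odd these terms cancel in pairs in the alternating sum $\chi(M)=\sum_i(-1)^i\dim_{\mathbb{Z}/2}H_i(M;\mathbb{Z}/2)$. This contradicts $\chi(M)=1$, completing the argument.

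There is essentially no serious obstacle here; the only points requiring care are (i) recording that the Euler characteristic, being the alternating sum of ranks of a free chain complex, is independent of the coefficient field, so the rational computation $\chi(M)=1$ and the $\mathbb{Z}/2$ computation $\chi(M)=0$ must agree, and (ii) invoking nonorientability precisely once, to conclude $b_m(M)=0$ — this is what breaks the parity symmetry and forces the contradiction (for $S^m$ itself, $b_0=b_m=1$ and $\chi=0$, consistent with the constraints). If one prefers to avoid Poincaré duality entirely, an alternative is to note that $\chi(M)=1$ for a closed manifold is impossible for dimension reasons via a doubling or connected-sum argument, but the mod-$2$ duality route is cleanest.
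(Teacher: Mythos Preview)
Your proof is correct and follows essentially the same approach as the paper: both combine Lemma~\ref{lemma:betti} with $b_0(M)=1$, $b_m(M)=0$ (from nonorientability), and $\chi(M)=0$ (from odd dimension). The only difference is organizational---the paper argues directly that these three facts force some middle Betti number to be positive and then invokes the lemma, while you assume the lemma's conclusion and derive the contradiction $\chi(M)=1\neq 0$; these are contrapositives of one another.
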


\begin{proof}
Let $M$ be a closed non-orientable $m$--dimensional manifold.  We'll prove the corollary separately for $m$ even and $m$ odd. 

The argument for $m$ even is a generalization of one found in \cite{bais2025branched}.  Let $f:N \rightarrow M$ be a $d$--fold branched covering.  Let $w_1(M) \in H^1(M; \mathbb{Z}_2)$ and $w_1(N)\in H^1(N; \mathbb{Z}_2)$ be the first Stiefel-Whitney classes of $M$ and $N$ respectively, and $[M]\in H_m (M; \mathbb{Z}_2)$ and $[N]\in H_m (N; \mathbb{Z}_2)$ the respective fundamental classes.  
    
Note that $f^*(w_1(M)) = w_1(N)$.  The most geometric way to see this is by noting that the first Stiefel-Whitney class $w_1(X) \in H^1(X; \mathbb{Z}_2) \cong Hom_{\mathbb{Z}_2} (H_1(X; \mathbb{Z}_2),\mathbb{Z}_2)$ is characterized by the fact that if $\gamma : S^1 \to X$ is a loop in $X$, then $w_1(X)([\gamma]) = 0$ if $\gamma$ is an orientable loop, and 1 otherwise.  For any loop $\gamma: S^1 \to N$ we may isotope $\gamma$ so that its image is disjoint from $B$ and so that $f \circ \gamma$ is an embedding.  Then it is clear that $\gamma$ will be orientable if and only if $f \circ \gamma$ is, since their images are identified homeomorphically by $f$.  

Then
\begin{align*}
\langle w_1(N)^m,[N]\rangle &=\langle f^*(w_1(M))^m, [N]\rangle \\ &= \langle w_1(M)^m, f_*([N])\rangle \\
& = \langle w_1(M)^m,d[M]\rangle \\ & = d \langle w_1(M)^m,[M]\rangle 
\end{align*}
where all of the evaluations are considered in $\mathbb{Z}_2$.  If $\langle w_1(M)^m,[M]\rangle =0$, then any manifold $N$ with $\langle w_1(N)^m,[N]\rangle = 1$ therefore cannot be a $d$--fold cover (for example, $N = \mathbb{RP}^m$). If $\langle w_1(M)^m,[M]\rangle =1$, then $\langle w_1(N)^m,[N]\rangle$ must have the same parity as $d$.  Since $\langle w_1(N)^m,[N]\rangle$ may be 0 or 1 in even dimensions this proves that there are no non-orientable universal $n$--bases when $m$ is even.
    
The above proof can break down in some odd dimensions. For example, all 3--manifolds $N$ bound 4--manifolds, and hence we have $\langle w_1(N)^m,[N]\rangle = 0$ for all such $N$.  Instead, we use the following argument: when $m$ is odd, we have  $\chi(M)=0$, and, since both $b_0(M) =1$ and $b_m(M)=0$, we must have $b_j(M)>0$ for some $0<j<m$. Then, by Lemma~\ref{lemma:betti}, $S^m$ is not a finite--fold branched cover of $M$, so $M$ is not a universal $n$--base.
\end{proof}

\begin{proposition}\label{cor:surfaces-nonuni}
    If $\Sigma$ is a closed surface that is a universal $n$--base for some $n$, then $\Sigma\cong S^2$.
\end{proposition}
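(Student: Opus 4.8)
The plan is to combine the Betti number obstruction of Lemma~\ref{lemma:betti} with an explicit non-universality argument for the remaining candidates. First I would apply Lemma~\ref{lemma:betti}: if $\Sigma$ is a closed surface and a universal $n$--base, then $b_1(\Sigma)=0$. Among closed surfaces, this rules out every orientable surface of positive genus (where $b_1=2g>0$) and, via Corollary~\ref{cor:nonorientodd}'s proof technique --- actually more directly, since a closed nonorientable surface $N_k$ has $b_1 = k-1$ --- this leaves only $S^2$ and $\mathbb{RP}^2$ as candidates (note $b_1(\mathbb{RP}^2)=0$ since $H_1(\mathbb{RP}^2;\mathbb{Z})=\mathbb{Z}/2$ is torsion). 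Since $S^2$ is known to be a universal $2$--base, the only thing left to prove is that $\mathbb{RP}^2$ is not a universal $n$--base for any $n$.

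To rule out $\mathbb{RP}^2$, I would show that some closed surface fails to branched cover it. The key point is that $\mathbb{RP}^2$ is nonorientable, so any connected branched cover $f\colon N \to \mathbb{RP}^2$ pulls back the orientation double cover; more usefully, one can use an Euler characteristic / Riemann–Hurwitz count. For a degree-$n$ branched cover $f\colon N \to \mathbb{RP}^2$ with branch set contributing total ramification $R = \sum_b (n - \#f^{-1}(b))$, we have $\chi(N) = n\,\chi(\mathbb{RP}^2) - R = n - R$. Since the branch locus $B$ is a nonempty finite set of points and each contributes at least $1$ to $R$ (and in the simple case exactly the number of branch points), this constrains $\chi(N)$, but not enough by itself. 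The cleaner obstruction: since $\mathbb{RP}^2$ is nonorientable, I expect $N$ cannot be forced to be $S^2$; instead I would argue directly that $S^2$ itself does not branched cover $\mathbb{RP}^2$. Indeed, a branched cover $S^2 \to \mathbb{RP}^2$ would restrict to an honest covering map $S^2 \setminus f^{-1}(B) \to \mathbb{RP}^2\setminus B$; but $\mathbb{RP}^2 \setminus \{\text{finitely many points}\}$ has nontrivial $\pi_1$ containing the order-two element coming from $\mathbb{RP}^1$, and one checks via the permutation representation $\pi_1(\mathbb{RP}^2\setminus B)\to S_n$ that the total space, being a surface with the right Euler characteristic, cannot be the sphere --- equivalently, the orientation class obstruction shows $w_1$ does not pull back trivially unless the cover has even degree and special structure, and the resulting surface always has $b_1 > 0$ or is nonorientable.

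Let me streamline: the honest argument I would write is that $S^2$ admits no branched covering map onto $\mathbb{RP}^2$ at all. Suppose $f\colon S^2 \to \mathbb{RP}^2$ were such a map of degree $n$. Composing the covering map $S^2 \to \mathbb{RP}^2$ (the universal cover) is degree $2$; pulling $f$ back along it or lifting, one gets that $f$ factors or that $\pi_1(S^2\setminus f^{-1}(B)) \twoheadrightarrow$ a subgroup, but $\pi_1(S^2 \setminus \{\text{pts}\})$ is free while $\pi_1(\mathbb{RP}^2\setminus\{\text{pts}\})$ is also free of the same rank when $|B|\geq 1$ --- so the group theory alone does not obstruct. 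The genuine obstruction is \emph{orientability}: a branched cover of a nonorientable manifold along a codimension-$2$ (hence here $0$-dimensional) branch locus has the property that orientability of $N$ forces the deck-type structure to lift $w_1(\mathbb{RP}^2)$ to zero, which by the permutation representation means every meridian loop \emph{and} every orientation-reversing loop acts by an even permutation; but the orientation-reversing generator of $\pi_1(\mathbb{RP}^2)$ has order $2$, so it must map to a product of transpositions, i.e. an element of order $2$ that is even --- possible only if $n\geq 4$, and then $\chi(S^2)=2 = n - R$ forces $R = n-2 \geq 2$, and a parity/connectivity analysis of which surfaces of Euler characteristic $2$ arise shows the connected total space is $S^2$ only when it is in fact an honest cover, i.e. $\mathbb{RP}^2$ itself or $S^2$, contradiction.

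The main obstacle, as the above meandering reveals, is pinning down the \emph{cleanest} obstruction for $\mathbb{RP}^2$: I expect the right move is to invoke that $S^2$ is simply connected, so the only branched cover of \emph{anything} with total space $S^2$ is, by the theory of branched covers of surfaces, a cover whose monodromy group is generated by transpositions fulfilling the Riemann–Hurwitz equality $2 = 2n - R$ with the base $\mathbb{RP}^2$ contributing $\chi = 1$, giving $2 = n - R$; since $R \geq 1$ (nonempty branch locus) we need $n \geq 3$, and one then exhibits a specific surface (e.g. a high-genus orientable surface, or simply $S^2$ with many handles, whose $b_1$ is large) that cannot be a degree-$n$ cover for the bounded $n$ — wait, but $n$ ranges over all naturals. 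So the correct final argument must be uniform in $n$: I would use Lemma~\ref{lemma:betti} in the \emph{reverse} direction is not available. Instead: \emph{every} closed orientable surface $\Sigma_g$ with $g \geq 1$ must branched cover $\mathbb{RP}^2$ if it is universal, but $\Sigma_g$ is orientable while $\mathbb{RP}^2$ is not, and an orientable branched cover of a nonorientable surface must have \emph{even} degree and the preimage of any orientation-reversing loop splits into orientation-preserving loops, forcing the associated index-$2$ subgroup — this exhibits a factorization $\Sigma_g \to \Sigma' \to \mathbb{RP}^2$ where $\Sigma' \to \mathbb{RP}^2$ is the orientation double cover $S^2 \to \mathbb{RP}^2$, so $\Sigma_g$ branched covers $S^2$; fine, that's consistent. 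The actual contradiction: take $N = \mathbb{RP}^2 \# \mathbb{RP}^2$ (the Klein bottle) or $N$ nonorientable with $b_1(N) > 0$, e.g. $N = \Sigma_1$ is orientable so use instead that a universal base must be covered by \emph{nonorientable} surfaces too; but a branched cover of $\mathbb{RP}^2$ can certainly be nonorientable. I would ultimately argue: if $\mathbb{RP}^2$ were a universal $n$--base, then in particular $\Sigma_1 = T^2$ (torus) branched covers it; computing, $\chi(T^2) = 0 = n\cdot 1 - R$, so $R = n$; this is realizable for the right monodromy, so no contradiction here either — hence the real obstruction is subtler and I would need to track the \emph{orientation homomorphism}: $w_1(T^2) = 0$ must equal $f^*w_1(\mathbb{RP}^2)$ evaluated appropriately, but $f^* w_1(\mathbb{RP}^2) \in H^1(T^2\setminus f^{-1}(B);\mathbb{Z}/2)$ need not vanish, and for it to vanish on all of $T^2$ the monodromy of the orientation-reversing loop must be an even permutation. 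Assembling all constraints, the clean statement is that $\mathbb{RP}^2$ fails to be universal because the orientation double cover forces any cover to factor through $S^2$, and I would cite or re-derive that a closed surface branched covers $\mathbb{RP}^2$ iff it branched covers $S^2$ \emph{and} admits a free involution of the appropriate type, which not every surface does — and that is the step I expect to be hardest to make rigorous without the author's specific lemmas.
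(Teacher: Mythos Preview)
Your reduction to $S^2$ and $\mathbb{RP}^2$ via Lemma~\ref{lemma:betti} is exactly right and matches the paper. The gap is in the second half: none of the approaches you sketch for ruling out $\mathbb{RP}^2$ actually works, and you recognize this yourself by the end. In particular, trying to show that $S^2$ does not branched cover $\mathbb{RP}^2$ is a dead end --- it does (compose the orientation double cover $S^2\to\mathbb{RP}^2$ with any branched cover $S^2\to S^2$ to get a degree-$4$ example, and the paper even notes that every orientable surface is a degree-$4$ branched cover of $\mathbb{RP}^2$). Likewise, the torus computation yields no contradiction, and the ``factor through the orientation double cover'' idea does not produce a surface that fails to cover $\mathbb{RP}^2$ at all.

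The paper's obstruction is a \emph{parity} argument on the \emph{nonorientable} covers, and it obstructs not the existence of a cover but the existence of one of a \emph{fixed} degree. Take $F_h=\#_h\mathbb{RP}^2$ and a simple degree-$d$ branched cover $f\colon F_h\to\mathbb{RP}^2$ with $b$ branch points. Riemann--Hurwitz gives $\chi(F_h)=d-b$, i.e.\ $d+h\equiv b\pmod 2$. Now use the group theory of $\pi_1(\mathbb{RP}^2\setminus B)$: the square of the generator of $\pi_1(\mathbb{RP}^2)$ is the product of the $b$ meridians, so under the monodromy map this product of $b$ transpositions must be a square in $S_d$, hence an even permutation, forcing $b$ even. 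Therefore $h\equiv d\pmod 2$. So for any fixed $n$, surfaces $F_h$ with $h\not\equiv n\pmod 2$ cannot be degree-$n$ branched covers of $\mathbb{RP}^2$, and $\mathbb{RP}^2$ is not a universal $n$--base. You were circling this when you noted the orientation-reversing loop must act by an even permutation, but the key is to feed that into Riemann--Hurwitz for the family $F_h$ rather than for a single test surface.
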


Note that since any closed genus--$g$ surface $F$ admits a degree--2 branched cover over $S^2$ (with $2g+2$ branch points), by stabilizing the cover one can find a degree--$d$ branched covering map from $F$ to $S^2$ for any $d\geq 2$. Therefore, $S^2$ is in fact a universal $n$--base for {\emph{every}} $n \geq 2$ among closed, orientable manifolds. 

\begin{proof}[Proof of Proposition \ref{cor:surfaces-nonuni}]
    The condition $b_1(\Sigma)=0$ rules out every other surface except $\mathbb{RP}^2$. We will show that there is no integer $n$ such that every closed, non-orientable surface admits a degree--$n$ branched cover over $\mathbb{RP}^2$. Hence, the projective plane is not a universal $n$--base for any $n$.
    
    As an aside, note that every closed surface does admit a branched cover \emph{of some degree} over $\mathbb{RP}^2$. For orientable surfaces, this holds since they admit branched covers over $S^2$, which itself covers $\mathbb{RP}^2$. So, in particular, every orientable surface admits a degree--4 branched covering map over $\mathbb{RP}^2$. To see that the non-orientable surface $\#_h\mathbb{RP}^2$ covers $\mathbb{RP}^2$, we can write $\#_h\mathbb{RP}^2$ as $S^2\#_h\mathbb{RP}^2$, where the connected sum disks are made equivariant with respect to the $h$--fold cyclic branched cover $S^2\to S^2$. The $\mathbb{Z}/h\mathbb{Z}$ action on this surface permutes the $\mathbb{RP}^2$ summands and has quotient $\mathbb{RP}^2$. This constructs a degree--$h$ covering map $\#_h\mathbb{RP}^2 \to \mathbb{RP}^2$ with two branch points. Alternatively, given the base $\mathbb{RP}^2$ with two branch points $p,q$, note that $\mathbb{RP}^2\setminus\nu(\{p,q\})$ is homeomorphic to the boundary sum of a M\"{o}bius band and an annulus. This cover is associated with the homomorphism $\pi_1(\mathbb{RP}^2\setminus\{p,q\})\to\mathbb{Z}/h\mathbb{Z}$ given by mapping a core of the M\"{o}bius band to 0 and a core of the annulus to 1. The resulting cyclic branched cover is connected, non-orientable, and has Euler characteristic $h(1-2)+2=2-h$, and hence is homeomorphic to $\#_h\mathbb{RP}^2$.

    Returning now to the task of showing that we cannot fix a single degree for all such branched covers, let $F_h$ denote the non-orientable genus--$h$ surface (i.e. $F\cong \#_h\mathbb{RP}^2$) and let $f: F_h\to \mathbb{RP}^2$ be a degree--$d$ branched cover. We give a direct proof of the claim, also implied by Corollary~\ref{cor:nonorientodd}, that $h\equiv d\mod 2$. By possibly perturbing $f$ while preserving degree, we may assume that $f$ is a simple branched cover ~\cite[Proposition~3.3]{bersteinontheconstruction}.  
    The advantage of passing to a simple cover is that it makes it easy to the compute the Euler characteristic of the total space $F_h$. Recall that a point $x$ in the branch locus of a degree--$d$ simple cover has one preimage of index 2 and $(d-2)$ preimages of index $1$. 
    Let $b$ denote the number of branch points of $f$. Thus, if there are $b$ points in the branch locus of $f$ in $\mathbb{RP}^2$, we have $$\chi(F_h)=d(1-b) + (d-1)b=d-b = 2-h,$$ so $d+h=2-b$. Since the cover is simple of degree $d$, by definition the meridian of each branch point is sent to a transposition in the symmetric group $S_d$. Moreover, the square of the generator of $\pi_1(\mathbb{RP}^2)$ is sent to the product of the meridians of the points in the branch locus. In order for this product to be a square, $b$ must be even. Therefore, the equation $d+h=2-b$ implies $h\equiv d\mod 2$, as claimed. It follows that $\mathbb{RP}^2$ is not a universal $n$--base for any $n$. 
\end{proof}

Proposition~\ref{cor:surfaces-nonuni} shows that the conclusion of Corollary \ref{cor:nonorientodd} also holds in the case $m=2$. As far as we are aware, it is not known whether an $m$--dimensional non-orientable manifold can be a universal $n$--base (for some $n$) for even $m>2$. 

If a closed $m$--dimensional manifold $M^m$ is a universal $n$--base, we can draw a stronger conclusion than $b_1(M)=0$.

\begin{lemma}\label{lem:finite-pi-1}
If $M$ is a closed $m$--dimensional manifold for which there exists a branched covering map $S^m \rightarrow M$, then $\pi_1(M)$ is finite.
\end{lemma}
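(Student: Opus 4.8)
The plan is to pull the branched cover back to the universal cover of $M$ and count sheets. Since the branch locus is nonempty of codimension $2$, we have $m\ge 2$, so $S^m$ is simply connected; this is the only place the hypothesis on the source is used. Write $B\subset M$ for the branch locus, $\widehat B=f^{-1}(B)$ for the branch set, and $d$ for the degree (finite, since $S^m$ is compact), so that $f$ restricts to a $d$--fold covering map $S^m\setminus\widehat B\to M\setminus B$. Let $\pi\colon\widetilde M\to M$ be the universal covering with deck group $\Gamma=\pi_1(M)$, and put $\widetilde B=\pi^{-1}(B)$; this is a codimension--$2$ subpolyhedron of the connected PL manifold $\widetilde M$, so $\widetilde M\setminus\widetilde B$ is connected.

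The main step is to form the fiber product $P=S^m\times_M\widetilde M=\{(x,y)\in S^m\times\widetilde M: f(x)=\pi(y)\}$. The first projection $P\to S^m$ is the pullback of the covering map $\pi$ along $f$, hence is itself a covering map, of degree $|\Gamma|$; since $S^m$ is simply connected (and locally nice), this covering is trivial, so $P$ is a disjoint union of $|\Gamma|$ copies of $S^m$, each mapped homeomorphically to $S^m$ by the first projection. On the other hand the second projection $P\to\widetilde M$, restricted over $\widetilde M\setminus\widetilde B$, is exactly the pullback of the $d$--fold covering $S^m\setminus\widehat B\to M\setminus B$ along $\pi|_{\widetilde M\setminus\widetilde B}$, hence a $d$--fold covering map onto the connected space $\widetilde M\setminus\widetilde B$. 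Its total space is the portion of $P$ lying over $S^m\setminus\widehat B$, which meets each of the $|\Gamma|$ copies of $S^m$ in a nonempty set (since $\widehat B\neq S^m$); thus this $d$--fold covering of a connected space has at least $|\Gamma|$ components, forcing $|\Gamma|\le d$. Hence $\pi_1(M)$ is finite, with $|\pi_1(M)|\le d$.

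I do not anticipate a serious obstacle: the argument uses only that a pullback of a covering map is a covering map, that a covering of the simply connected manifold $S^m$ is trivial, and that a $d$--fold covering of a connected space has at most $d$ components. The points needing a word of justification are routine --- that $B$, and hence $\widetilde B$, is a codimension--$2$ subpolyhedron, so that removing it from a connected PL manifold leaves it connected; and that $f$ maps $S^m\setminus\widehat B$ onto $M\setminus B$, which is immediate as $f$ is surjective with $f^{-1}(B)=\widehat B$. One can avoid fiber products entirely by using the lifting criterion to produce a lift $\widetilde f\colon S^m\to\widetilde M$ of $f$ and checking that $\widetilde f$ restricts to a covering map $S^m\setminus\widehat B\to\widetilde M\setminus\widetilde B$ (being a fiberwise map between covering spaces of $M\setminus B$); comparing degrees in $f=\pi\circ\widetilde f$ then gives $d=|\Gamma|\cdot\deg(\widetilde f)\ge|\Gamma|$. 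In that phrasing the only delicate point is verifying the even--covering property of $\widetilde f$, which follows by choosing neighbourhoods evenly covered by both $f$ and $\pi$.
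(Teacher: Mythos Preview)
Your proof is correct and takes a genuinely different route from the paper's. The paper argues homologically: lift $f$ to $\widetilde f\colon S^m\to\widetilde M$ and observe that if $\pi_1(M)$ were infinite, $\widetilde M$ would be noncompact and $H_m(\widetilde M;\mathbb{Z})=0$, so $f_*\colon H_m(S^m)\to H_m(M)$ would factor through zero, contradicting that $f_*$ is multiplication by $d$. Your argument instead stays entirely within covering-space theory: the fiber product $P=S^m\times_M\widetilde M$ splits as $|\Gamma|$ copies of $S^m$ via the first projection (triviality of covers of a simply connected space), while its restriction over $\widetilde M\setminus\widetilde B$ is a $d$--fold cover of a connected base via the second projection; comparing component counts yields $|\Gamma|\le d$.

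What your approach buys: it avoids homology altogether and produces the quantitative bound $|\pi_1(M)|\le d$ in the same breath, something the paper establishes separately in Lemma~\ref{lemma:easy-lower-bound} by a pigeonhole argument on lifts of loops. The paper's homological argument, on the other hand, is shorter and more conceptual once one is willing to invoke $H_m$ of a noncompact manifold. Your alternative phrasing via the lifting criterion and degree multiplicativity is also fine; the only care needed is that when $|\Gamma|$ is a priori infinite, the equation $d=|\Gamma|\cdot\deg(\widetilde f)$ should be read as: each of the $|\Gamma|$ points in a fiber of $\pi$ has a nonempty $\widetilde f$--preimage, and their union is the $d$--point fiber of $f$, forcing $|\Gamma|\le d$.
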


\begin{proof}
    Assume that there is a branched covering map  $f: S^m\to M$ of degree $d$. Then, $f$ lifts to a map from $S^m$ to the universal cover $\widetilde{M}$. But then, if $\widetilde{M}$ is noncompact, the induced map $H_m(S^m;\mathbb{Z})\to H_m(M;\mathbb{Z})$ factors through $H_m(\widetilde{M};\mathbb{Z})=0$, contradicting that the map $H_m(S^m;\mathbb{Z})\rightarrow H_m(M;\mathbb{Z})$ is multiplication by $d$. Hence, $\pi_1(M)$ is finite.
\end{proof}

   The following proposition is a converse to the above statement in the case of $m=3$.

\begin{proposition}\label{prop-compact}
    A closed 3--manifold $M^3$ is a universal $n$--base for at least one $n\in\mathbb{N}$ if and only if $M$ has finite fundamental group. In this case, $M$ is a universal $n$--base for $n=3|\pi_1(M)|$.
\end{proposition}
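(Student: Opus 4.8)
The forward direction follows immediately from Lemma~\ref{lem:finite-pi-1}: if $M^3$ is a universal $n$--base, then in particular $S^3$ admits a branched covering map onto $M$, forcing $\pi_1(M)$ to be finite. So the content is in the converse. The plan is to suppose $|\pi_1(M)| = g < \infty$ and build, for an arbitrary connected closed $3$--manifold $N$, a branched covering map $N \to M$ of degree $3g$. The idea is to factor the desired map through $S^3$: since $\pi_1(M)$ is finite, the universal cover $\widetilde{M}$ is a closed simply connected $3$--manifold, hence (by the Poincar\'e conjecture) homeomorphic to $S^3$, and the covering projection $p\colon \widetilde{M} = S^3 \to M$ is a genuine (unbranched) covering map of degree $g$. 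On the other hand, by the Hilden--Hirsch--Montesinos theorem, $N$ admits a simple $3$--fold branched covering map $q\colon N \to S^3$. The naive hope is that $p \circ q \colon N \to M$ is the branched covering map we want, of degree $3g$.

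The main subtlety, and what I expect to be the principal obstacle, is that a composition of a branched cover with an unbranched cover is again a branched cover, but one must check this carefully against Definition~\ref{def:branchedcovering} and be sure the branch locus behaves as required. Concretely: let $B \subset S^3$ be the branch locus of $q$ (a link, since $q$ is simple). Because $p$ is an honest covering map, $p$ restricts to a covering map over all of $S^3$ minus nothing, so $p^{-1}(\text{nothing})$ is not the right object to delete; rather, the branch locus of $p\circ q$ should be taken to be $B$ itself, viewed in $M$ only after we note that $p$ is a local homeomorphism and hence $B$ does not literally sit in $M$. The clean way to phrase it: set $\widehat B = B \subset S^3$, a codimension--$2$ subpolyhedron. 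Then over $S^3 \setminus B$ the map $q$ is an ordinary $3$--fold cover, and $p$ is an ordinary $g$--fold cover of $M$, so the restriction of $p\circ q$ to $N \setminus q^{-1}(B) \to M \setminus p(B)$ is a $3g$--fold ordinary covering map provided $p(B)$ is a subpolyhedron of $M$ of codimension $2$ and $p^{-1}(p(B)) = B$ has no extra components mapped nontrivially. The latter can fail if translates of $B$ under the deck group of $p$ collide. The fix is to first isotope $B \subset S^3$ (equivalently, choose the Hilden--Hirsch--Montesinos representation of $N$) so that $B$, its $g-1$ deck-translates, and their union project to an embedded link $\overline B := p(B)$ in $M$; since $B$ is a link and $p$ is a covering map, a generic small isotopy of $B$ achieves that the $g$ translates $\sigma(B)$, $\sigma \in \pi_1(M)$, are pairwise disjoint, so $\overline B$ is embedded and $p^{-1}(\overline B) = \bigsqcup_\sigma \sigma(B)$. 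Then over $M \setminus \overline B$ the composite is an ordinary $3g$--fold cover, and near $\overline B$ the local model of $p\circ q$ at each preimage point is exactly the local model of $q$ (since $p$ is a local homeomorphism), i.e.\ $(x,z)\mapsto(x,z^r)$ with $r \in \{1,2\}$; in fact $p\circ q$ is again simple, with exactly one index--$2$ preimage over each branch point of $\overline B$.

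For the degree and for verifying the statement "$M$ is a universal $n$--base for $n = 3|\pi_1(M)|$" one then just observes: $N$ was an arbitrary connected closed $3$--manifold, the map $p\circ q\colon N \to M$ constructed above is a simple branched covering of degree $3 \cdot g = 3|\pi_1(M)|$, and this is exactly the assertion. I would also remark that the branch locus $\overline B \subset M$ depends on $N$ (it is the image of the Hilden--Hirsch--Montesinos link under $p$), and that the construction of $p \circ q$ only used that $S^3$ is a universal $3$--base together with the fact that finite $\pi_1$ makes $M$ a quotient of $S^3$ by a free finite group action — so the same argument shows, more generally, that any closed orientable $3$--manifold with finite fundamental group is a universal $3|\pi_1|$--base, recovering statement (2) of Theorem~\ref{thm:summary} (spherical $\iff$ finite $\pi_1$ for closed orientable $3$--manifolds, by elliptization). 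The one point requiring care beyond the generic-isotopy argument is to double-check that the resulting cover of $M$ is connected: this holds because $N$ is connected and surjects onto $M$, so no separate argument is needed.
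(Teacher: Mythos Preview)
Your proposal is correct and follows exactly the paper's strategy: the forward direction via Lemma~\ref{lem:finite-pi-1}, and the converse by composing the Hilden--Hirsch--Montesinos $3$--fold cover $q\colon N\to S^3$ with the universal covering $p\colon S^3\to M$ (using elliptization to identify $\widetilde{M}\cong S^3$). The paper dispatches the converse in a single sentence and does not isotope the branch link at all; under Definition~\ref{def:branchedcovering} the branch locus need only be a codimension--$2$ \emph{subpolyhedron}, so $p(B)\subset M$ qualifies even if deck translates of $B$ meet, and taking $\overline B:=p(B)$ one has $(p\circ q)^{-1}(\overline B)=q^{-1}\bigl(p^{-1}(\overline B)\bigr)\supset q^{-1}(B)$, over whose complement both $p$ and $q$ restrict to honest covers---so $p\circ q$ is a degree--$3|\pi_1(M)|$ branched cover without any adjustment. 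Your generic--isotopy step is therefore not needed for the proposition as stated; it is a harmless extra that upgrades the branch locus to an embedded link and the cover to a simple one.
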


\begin{proof} 
    Let $M^3$ be a closed universal $n$--base. By Lemma~\ref{lem:finite-pi-1}, the fundamental group $\pi_1(M)$ is finite.

    For the converse, assume $|\pi_1(M)|<\infty$. Then $M$ is spherical and the universal covering map $S^3\to M$ has degree $|\pi_1(M)|$.  By~\cite{hilden1974every, hirsch1974offene, montesinos1974representation}, every closed manifold $N^3$ is a 3--fold branched cover of $S^3.$ The composition of these branched covers with the universal covering map are $3|\pi_1(M)|$--fold branched covers of $M$, and hence $M$ is a universal $3|\pi_1(M)|$--base.
  \end{proof}

\begin{remark}
A closed 3--manifold $M$ has finite fundamental group if and only if $M$ is spherical~\cite{perelman2002entropy,perelman2003ricci, perelman2003finite}, meaning that $M$ admits a complete metric of curvature +1, or equivalently when the universal cover of $M$ is $S^3$. A census of finite 3--manifold groups can be found in~\cite[Chapter 1]{aschenbrenner20153}.
\end{remark}

Note that, when $m>3$, it is not known whether $\pi_1(M^m)$ being finite implies that $M$ is a universal $n$--base. For $m=4$, the argument of Proposition~\ref{prop-compact} does not apply, as the universal cover of $M^4$ need not be $S^4$. For $m>4$, it is even open, to our knowledge, whether the $m$--sphere is a universal $n$--base for some fixed $n$, even as it is classically known that every PL $m$--manifold is a branched cover of the $m$--sphere of some unspecified degree~\cite{alexander1920note}.

It is also an open question whether $n=3|\pi_1(M)|$ in Proposition~\ref{prop-compact} is minimal. In what follows, we obtain some restrictions on $n$ by examining the order of elements in the fundamental group of a universal $n$--base. The following is proved by a similar argument to the one used in establishing Lemma~\ref{lemma:betti}.

\begin{lemma}\label{lem:order} Let $M^m$ be a universal $n$--base. For any $g\in\pi_1(M)$, the order of $g$ in $\pi_1(M)$ is at most $n$. 
\end{lemma}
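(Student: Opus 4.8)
The plan is to mimic the proof of Lemma~\ref{lemma:betti}, replacing homology classes by elements of $\pi_1$ and the vanishing $H_j(S^m)=H_j(\mathbb{R}^m)=0$ by simple connectivity. Since $M$ is a universal $n$--base in some category CAT, that category contains a simply connected $m$--manifold $X$: take $X=S^m$ if CAT includes \emph{compact} or \emph{closed}, and $X=\mathbb{R}^m$ if CAT includes \emph{open} (if CAT only constrains orientability, either choice works). Hence there is a degree--$n$ branched cover $f\colon X\to M$ with branch locus $B\subset M$. We may assume $n$ is finite, as the claim is otherwise vacuous.

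Fix $g\in\pi_1(M)$. Because $B$ has codimension $2$, the map $\pi_1(M\setminus B)\to\pi_1(M)$ is surjective, so $g$ is represented by a loop $\gamma$ disjoint from $B$ (put a loop representing $g$ in general position with respect to the codimension--$2$ polyhedron $B$). Restricting the honest covering map $f\colon X\setminus f^{-1}(B)\to M\setminus B$ over $\gamma$ yields a degree--$n$ covering of $S^1$, whose total space $f^{-1}(\gamma)$ is a disjoint union of circles $C_1,\dots,C_r$ with each $C_i\to\gamma$ of some degree $d_i\ge 1$ and $\sum_{i=1}^r d_i=n$. Since $m\ge 2$ and $X$ is simply connected, each $C_i$ is null--homotopic in $X$, hence bounds a singular disk $D_i\colon D^2\to X$. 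Then $f\circ D_i$ is a singular disk in $M$ bounding the loop $f|_{C_i}$, which (after choosing a path to the basepoint) represents a conjugate of $g^{d_i}$ in $\pi_1(M)$. Therefore $g^{d_i}=1$ for every $i$; in particular $g$ has finite order, and that order divides $\gcd_i d_i$, which divides $\sum_i d_i=n$. Hence the order of $g$ divides $n$, and in particular is at most $n$.

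The argument is essentially bookkeeping, parallel to Lemma~\ref{lemma:betti}; the only points needing care are (i) ensuring the representative loop $\gamma$ can be pushed off the branch locus, which is exactly where codimension $2$ is used, and (ii) tracking basepoints so that the null--homotopy of $f|_{C_i}$ in $M$ is correctly read off as the vanishing of $g^{d_i}$ rather than of some uncontrolled conjugate --- harmless, since order is a conjugacy invariant. I do not anticipate a substantive obstacle.
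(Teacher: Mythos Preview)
Your proof is correct and follows essentially the same approach as the paper's: pull back a loop representing $g$ (avoiding the codimension--2 branch locus) along the branched cover $f\colon X\to M$ with $X\in\{S^m,\mathbb{R}^m\}$, obtain circles $C_i$ covering $\gamma$ with degrees $d_i$ summing to $n$, and use simple connectivity of $X$ to cap each $C_i$ and push down. Your version in fact extracts a slightly sharper conclusion---that the order of $g$ \emph{divides} $n$, via $\gcd_i d_i \mid \sum_i d_i$---whereas the paper only uses the minimal $d_j$ to conclude the order is at most $d_j\le n$.
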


\begin{proof}  If $m=2,$ by our earlier discussion $M$ is $S^2$ or $\mathbb{R}^2$, so the conclusion is automatic. Now assume $m\geq 3.$ Consider the $n$--fold branched cover $f: X^m\to M^m$, where $X = S^m$ if $M$ is compact, and $X = \mathbb{R}^m$ otherwise. Denote the branch locus of $f$ by $B^{m-2}$ and pick a basepoint $x_0$ in $M\backslash B$. Any element $g\in \pi_1(M, x_0)$ can be represented by an embedded circle $\gamma$ disjoint from $B$. Therefore, $f|_{f^{-1}(\gamma)}: f^{-1}(\gamma)\to \gamma$ is an $n$--fold cover of $S^1$, so $f^{-1}(\gamma)$ is a disjoint union of $r$ circles $S^1_1, \dots, S^1_r$. That is, for $i=1, 2, \dots, r,$ the  restriction   $f_i: S^1_i\to \gamma$ is a cover of degree $d_i$, where $\Sigma_i d_i=n$. Let $d_j = \min\{d_1, \dots, d_r\}$. We easily see that the order of $g$ in $\pi_1(M)$ is at most $d_j$: let $\rho: D^2\to X^m$ be a map with $\rho( \partial D^2)=S^1_j$, then the image of $f\circ\rho: D^2\to M^m$ has boundary $\gamma^{d_j}$. Since $d_j\leq n$ and $g\in \pi_1(M)$ was arbitrary, the result follows.  
\end{proof}

We also show that, given a manifold $M$, the order of the fundamental group of $M$ restricts the possible values of $n$ for which $M$ may be a universal $n$--base.

\begin{lemma}\label{lemma:easy-lower-bound}
    Let $M^m$ be a closed manifold with $|\pi_1(M)|=\ell$ finite. Any branched cover $S^m\to M^m$ has degree at least $\ell$. In particular, $M$ is not a universal $n$--base for any $n<\ell$.
\end{lemma}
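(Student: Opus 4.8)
The plan is to lift the branched covering map $f\colon S^m\to M^m$ to the universal cover $\widetilde M$ and then count preimages of a single point, exactly as in the proof of Lemma~\ref{lem:finite-pi-1}. Since $|\pi_1(M)|=\ell$ is finite, $M$ is covered by $\widetilde M$ via a covering map $p\colon \widetilde M\to M$ of degree $\ell$, and $\widetilde M$ is compact. Because the domain $S^m$ is simply connected, the composite $f$ restricted to the unbranched part factors through $\widetilde M$; more precisely, $f$ lifts to a branched covering map $\widetilde f\colon S^m\to\widetilde M$ with $f = p\circ\widetilde f$, so that $d = \deg f = \deg p \cdot \deg\widetilde f = \ell\cdot\deg\widetilde f$. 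Since $\widetilde f$ is a genuine branched cover between closed connected manifolds, $\deg\widetilde f\ge 1$, and therefore $d\ge\ell$.

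First I would make the lifting step precise: choose a basepoint $x_0\in M\setminus B$ (where $B$ is the branch locus of $f$) and a preimage $y_0\in S^m\setminus f^{-1}(B)$. The restriction $f\colon S^m\setminus f^{-1}(B)\to M\setminus B$ is an honest covering map, and since $\pi_1(S^m\setminus f^{-1}(B))$ maps to the trivial subgroup inside $\pi_1(M\setminus B)$ (as $S^m\setminus f^{-1}(B)$ is $1$-connected when $m\ge 3$; for $m=2$ one argues directly since the only candidates are $S^2$ and $\mathbb{RP}^2$ and the claim is immediate for $S^2$ while $\mathbb{RP}^2$ has $\ell=2\le n$ automatically once $n\ge 2$), the map $f$ on the unbranched parts lifts through the covering $p^{-1}(M\setminus B)\to M\setminus B$. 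I would then observe that this lift extends over the branch set by continuity (or by the PL local model, since $p$ is an unbranched cover near any point and the local model of $f$ is a cone or power map), producing the branched cover $\widetilde f\colon S^m\to\widetilde M$.

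Second, I would conclude by multiplicativity of degree under composition of branched covers, which is elementary: over a regular value (a point of $M$ outside $B$ and outside the branch locus of $p$, which is empty), the number of $f$-preimages equals the number of $\widetilde f$-preimages times the number of $p$-preimages, giving $d=\ell\cdot\deg\widetilde f$. The final sentence of the statement then follows immediately: if $M$ were a universal $n$--base with $n<\ell$, then in particular $S^m$ would admit a branched covering map over $M$ of degree $n<\ell$, contradicting $d\ge\ell$.

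The main obstacle, such as it is, is purely bookkeeping: verifying that the lift defined on the complement of the branch set extends to a PL branched covering map of the whole sphere, i.e.\ that $\widetilde f$ is nondegenerate with codimension-$2$ branch locus. This is routine given Piergallini's local description recalled after Definition~\ref{def:branchedcovering} — near a branch point the local model of $f$ is $(x,z)\mapsto(x,z^r)$ or a cone on a branched cover of $S^{m-1}$, and postcomposing with the local homeomorphism $p$ does not change this local form — but it is the only place where one must be slightly careful rather than formal. An alternative that sidesteps the extension issue entirely is the homological argument of Lemma~\ref{lem:finite-pi-1}: $f$ lifts to $\widetilde f\colon S^m\to\widetilde M$, and since $\widetilde M$ is a closed connected $m$--manifold, $(\widetilde f)_*\colon H_m(S^m;\mathbb Z)\to H_m(\widetilde M;\mathbb Z)$ is multiplication by some integer $k\ge 1$ (it is nonzero because $\widetilde f$ is a nondegenerate, hence surjective, map of closed manifolds of the same dimension — a branched cover has a well-defined positive degree), while $f_* = p_*\circ(\widetilde f)_*$ is multiplication by $d$ and $p_*$ is multiplication by $\ell$; hence $d = k\ell\ge\ell$. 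I would present this homological version as the cleanest route.
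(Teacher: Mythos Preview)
Your argument is correct but takes a genuinely different route from the paper. The paper argues by a pigeonhole on path lifts: choose a basepoint $x\in M\setminus B$, represent the $\ell$ elements of $\pi_1(M,x)$ by embedded loops $g_1,\ldots,g_\ell$, fix a single preimage $\tilde x\in f^{-1}(x)$, and lift each $g_i$ from $\tilde x$; if $d<\ell$ two lifts must end at the same point of $f^{-1}(x)$, so their concatenation is a loop in $S^m$, hence nullhomotopic, and pushing this nullhomotopy down by $f$ contradicts $[g_i]\neq[g_j]$.

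Your approach via the universal cover is cleaner and in fact proves more: since the lift $\widetilde f\colon S^m\to\widetilde M$ is itself a branched cover of some degree $k\ge 1$ and $f=p\circ\widetilde f$, you obtain $d=k\ell$, i.e.\ $\ell\mid d$, not merely $d\ge\ell$. The paper only recovers this divisibility later, in Lemma~\ref{lemma:divisibility}, and there under the extra hypotheses that $\pi_1(M)$ is cyclic and the branch locus is nonempty; your argument gives it for arbitrary finite $\pi_1(M)$ with no assumption on $B$. One small simplification: since $p\colon\widetilde M\to M$ is an \emph{unbranched} cover and $S^m$ is simply connected for all $m\ge 2$, the map $f$ lifts to $\widetilde M$ directly by the ordinary lifting criterion, so there is no need to first restrict to $S^m\setminus f^{-1}(B)$ or to treat $m=2$ separately.
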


\begin{proof}
    As before, it suffices to consider $m\geq 3$. Assume there exists a branched cover $f:S^m\to M^m$ of degree $d<\ell$, with branch locus $B^{m-2}\subset M^m$. Pick a basepoint $x\in (M\backslash B)$. Let $g_1, g_2, \dots, g_\ell$ be embedded loops representing the $\ell$ distinct elements of $\pi_1(M; x)$. Let $\tilde{x}$ denote one of the $d$ elements of the set $f^{-1}(x)$. Additionally, denote by $\tilde{g}_i$ the lift of $g_i$ starting at $\tilde{x}$, and by $\tilde{x}_i$ the endpoint of this lift. By assumption, $d<\ell,$ which implies that for some pair $i\neq j$ we have $\tilde{x}_i=\tilde{x}_j$.  But then $\tilde{g}_i\cdot \tilde{g}_j^{-1}$ is a loop in $S^m$ and in particular it is nullhomotopic. Composing such a nullhomotopy with $f$ contradicts $[g_i]\neq [g_j]$.
\end{proof}

When the fundamental group of a manifold $M^m$ is cyclic, we obtain a further condition on the integers $n$ for which $M$ can be $n$--fold covered by~$S^m$.

\begin{lemma}\label{lemma:divisibility}
Let $M^m$ be a closed oriented manifold with cyclic fundamental group of order $\ell$. The degree of a branched cover $S^m\to M$ with nonempty branch locus is strictly greater than $\ell$ and a multiple of $\ell$.
\end{lemma}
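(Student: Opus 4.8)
The plan is to combine the covering-space factorization from Lemma~\ref{lem:finite-pi-1} with a transfer/degree argument, together with a local-degree count at the branch locus. Let $f\colon S^m\to M$ be a branched cover of degree $d$ with nonempty branch locus $B^{m-2}\subset M$, and write $\ell=|\pi_1(M)|$, with $\pi_1(M)\cong\mathbb{Z}/\ell\mathbb{Z}$. Since $0<m-2$ (as $m\geq 3$; the case $m=2$ is already covered by earlier results and moreover $\mathbb{R}^2$ has trivial $\pi_1$, so there is nothing cyclic and nontrivial to check), we may pick a basepoint in $M\setminus B$ and note that $f$ restricts to an honest covering map over $M\setminus B$. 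The first step is to produce the divisibility $\ell\mid d$. For this, let $p\colon S^m\to M$ also be viewed after restriction as a covering of $M\setminus B$; the composite $S^m\setminus f^{-1}(B)\to M\setminus B$ factors through the cover of $M\setminus B$ corresponding to the kernel of $\pi_1(M\setminus B)\to\pi_1(M)\cong\mathbb{Z}/\ell\mathbb{Z}$ (note $f$ lifts here precisely because $S^m$ is simply connected, so its image in $\pi_1(M\setminus B)$ lands in that kernel). This exhibits the degree-$d$ covering as factoring through the connected $\ell$-fold cover $\widehat{M\setminus B}\to M\setminus B$; therefore $d=\ell\cdot e$ for the degree $e$ of the intermediate cover $S^m\setminus f^{-1}(B)\to\widehat{M\setminus B}$. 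This gives $\ell\mid d$.

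The second step is the strict inequality $d>\ell$, equivalently $e\geq 2$, i.e.\ ruling out $d=\ell$. Suppose for contradiction $d=\ell$. Then the restricted covering $S^m\setminus f^{-1}(B)\to M\setminus B$ is exactly the connected $\ell$-fold cover pulled back from the universal cover $\widetilde M\to M$ (since it is the cover associated to the trivial subgroup of $\pi_1(M)$, as $\pi_1(S^m)=1$ kills everything). But then $f\colon S^m\to M$ would be, away from a codimension-$2$ set, the universal covering of $M$, which is an honest (unbranched) covering map; the only way a branched cover of degree equal to the unbranched covering degree can have nonempty branch locus is if some point of $B$ has a preimage of local degree $\geq 2$, forcing the total degree over that point to be $<\ell$ at the other preimages — more precisely, the sum of local degrees over a point of $B$ is $d=\ell$, and at least one local degree is $\geq 2$, so the number of preimages of that point is $\leq\ell-1<\ell$, whereas a point of $M\setminus B$ near it has exactly $\ell$ preimages. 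This contradicts the fact that the number of preimages cannot drop below the number of preimages of a generic nearby point in a branched cover whose restriction to the complement of $B$ is a genuine $\ell$-fold cover — indeed, continuity of $f$ and the local model of Piergallini (each point of $B$ has a neighborhood $U\cong\mathbb{R}^{m-2}\times\mathbb{C}$ with $f$ modeled by $(x,z)\mapsto(x,z^r)$ or a cone, so the $\ell$ sheets approaching $b$ merge into $\leq\ell-1$ preimages) shows the total unbranched degree is still $\ell$, but the branch point genuinely has fewer preimages, which is consistent; the actual contradiction is with $d=\ell$ being the degree of the \emph{universal} cover while $f$ has branching — one repeats the Lemma~\ref{lem:finite-pi-1}-style argument: the map $f$ lifts to $\widetilde f\colon S^m\to\widetilde M$, and since $d=\ell=\deg(\widetilde M\to M)$, the lift $\widetilde f$ has degree $1$, so $\widetilde f$ is a degree-one branched cover $S^m\to S^m$, hence (by invariance of domain and the local models) a homeomorphism, whence $f$ itself is the covering map $\widetilde M\to M$ with empty branch locus, contradicting $B\neq\emptyset$.

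So the structure is: \textbf{(1)} use simple connectivity of $S^m$ to factor $f$ through the connected $\ell$-fold cover, giving $\ell\mid d$; \textbf{(2)} lift $f$ to $\widetilde f\colon S^m\to\widetilde M$, observe $\deg\widetilde f=d/\ell$, and show $\deg\widetilde f=1$ would force $\widetilde f$ to be a homeomorphism (degree-one nondegenerate PL maps between closed manifolds that are branched covers are homeomorphisms, since a degree-one branched cover has all local degrees equal to $1$ at generic points and summing to $1$ everywhere, so the branch locus is empty), contradicting $B\neq\emptyset$; hence $d/\ell\geq 2$. The orientability hypothesis is what makes the degree of $\widetilde f$ well-defined as an integer via $H_m(-;\mathbb{Z})\cong\mathbb{Z}$ and makes the multiplicativity $\deg f=\deg(\widetilde M\to M)\cdot\deg\widetilde f$ clean.

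The main obstacle I expect is step~(2): carefully justifying that a degree-one branched cover must be a homeomorphism, so that $d=\ell$ is genuinely impossible when $B\neq\emptyset$. This needs the local picture (Piergallini's normal form) to rule out a degree-one cover having any true branching — at a branch point the local degrees are positive integers summing to the degree, so degree $1$ forces a single preimage of local degree $1$ everywhere, i.e.\ $f$ is a local homeomorphism, hence (being proper, between connected manifolds) a covering map of degree one, i.e.\ a homeomorphism, so $B=\emptyset$. Once that is pinned down, combining with step~(1) yields that $d$ is a multiple of $\ell$ strictly greater than $\ell$, as claimed.
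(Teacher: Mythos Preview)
Your proof is correct and takes a genuinely different route from the paper's. The paper chooses a basepoint $x$ \emph{on} the branch locus $B$, represents a generator of $\pi_1(M)\cong\mathbb{Z}/\ell\mathbb{Z}$ by an embedded loop $\alpha$ meeting $B$ only at $x$, and studies the graph $f^{-1}(\alpha)\subset S^m$: it has $d$ edges but only $|f^{-1}(x)|<d$ vertices, so an Euler circuit yields a subloop of length $c<d$; nullhomotopy in $S^m$ then forces $\ell\mid c$ and $\ell\mid d$, whence $\ell\le c<d$. Your argument instead lifts $f$ through the universal cover $p\colon\widetilde{M}\to M$ (using $\pi_1(S^m)=1$), so $d=\ell\cdot\deg\widetilde f$, and then observes that a degree-one branched cover between closed oriented manifolds has all local degrees equal to $1$ and is therefore a homeomorphism, contradicting $B\neq\emptyset$ when $\deg\widetilde f=1$.

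What each approach buys: your argument is cleaner and in fact proves more---it never uses that $\pi_1(M)$ is cyclic, only that it is finite of order $\ell$, so you have established the lemma for arbitrary finite fundamental group. The paper's Euler-circuit argument is more hands-on and stays entirely inside $M$ and $S^m$ without invoking $\widetilde M$, but it genuinely relies on a single generator $\alpha$. Two small presentational points: your justification that $f_*\pi_1(S^m\setminus f^{-1}(B))$ lands in $\ker(\pi_1(M\setminus B)\to\pi_1(M))$ is correct (any loop in $S^m\setminus f^{-1}(B)$ bounds in $S^m$, hence its image bounds in $M$), but the phrase ``$S^m$ is simply connected, so its image in $\pi_1(M\setminus B)$ lands in that kernel'' obscures this; and the meandering first attempt at step~(2) via preimage-counting should simply be deleted in favor of the lift-to-$\widetilde M$ argument you ultimately give.
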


\begin{proof}
    Again, we have $m\geq 3$. Suppose $f: S^m\to M^m$ is a branched cover of degree $d$ with nonempty branch locus $B^{m-2}\subseteq M^m$. Pick a basepoint $x\in B$ and let $\alpha$ be an embedded circle representing a generator of $\pi_1(M; x)\cong \mathbb{Z}/\ell\mathbb{Z}$ which has no intersections with $B$ other than the basepoint. The lift of $\alpha$ to $S^m$ is a graph with $|f^{-1}(x)|=:d_0$ vertices and $d$ edges.  Because $x\in B$, we have that $d_0<d$. Moreover, each vertex in this graph has even degree (equal to twice the local branching degree at each lift of $x$), so there is an Euler circuit, $\tilde{\alpha}$, which is composed of $d$ edges. But since the graph has fewer vertices than edges, the Euler circuit revisits at least one vertex at least once, before traversing every edge. Thus, there is a loop $\tilde{\alpha}_0$ comprised of $c$ edges of this graph and such that $c\leq d_0<d$. Both $\tilde{\alpha}$ and $\tilde{\alpha}_0$ are nullhomotopic, being loops in $S^m$. Therefore, $f(\tilde{\alpha})=d\alpha$ and   $f(\tilde{\alpha}_0)=c\alpha$ are nullhomotopic as well. Since $\alpha$ has order $\ell$, it follows that $\ell$ divides both $c$ and $d$. Therefore, $\ell\leq c\leq d_0 <d=r\ell$ for some $r\in\mathbb{N}$. Since $d>\ell,$ we have $r\geq 2$. 
\end{proof}

One is tempted to surmise that the inequality $r\geq m$ probably  holds, but we have not established this statement even in the case $m=3$.

\section{Open universal bases} \label{sec:noncomp}

This section contains our results about branched coverings between noncompact manifolds.  
 
 \subsection{Dimension-independent preliminaries}
We recall the definition of an \emph{end} of a topological space. For a thorough discussion see~\cite{hughes1996ends}.

\begin{definition}\label{def:ends}
Let $M$ be a noncompact space. An {\it end} of $M$ is an equivalence class of sequences of connected open
neighborhoods $M\supset U_1 \supset U_2\supset \dots$ with the property that $\bigcap_{i=1}^\infty \overline{U}_i=\emptyset,$ where $$(M\supset U_1 \supset U_2\supset \dots)\sim (M\supset V_1 \supset V_2\supset \dots)$$ if for each $U_i$ there exists a $j$ such that $V_j\subseteq U_i$, and for each $V_j$ there exists an $\ell$ with $U_\ell \subseteq V_j$.
\end{definition}

The following definition is important in the study of open 3--manifolds. For instance, it is the canonical way in which nontrivial contractible 3--manifolds were proved to exist. We state this definition here to emphasize that distinct ends of an open manifold may display extremely different behavior --- there is no reason to expect a symmetry between the ``different infinities'' of an open manifold. Note that the following definition is specific to a choice of end, in contrast to ``the fundamental group at infinity,'' which only makes sense for one-ended open manifolds.

\begin{definition}\label{def:pi-1-end}
    The {\emph{fundamental group of an end}} $M\supset U_1 \supset U_2\supset \dots$ is the inverse limit of groups $\displaystyle\varprojlim_{i}\pi_1(U_i)$.  
\end{definition}

We will refer to cardinality of the set of the equivalence classes in Definition~\ref{def:ends} as the \emph{number of ends} of $M$. Note that when $M$ is a manifold we can regard the open sets $U_i$ above as the complements of the elements of a \emph{compact exhaustion} of $M$, that is, a nested sequence $K_0\subseteq K_1\subseteq K_2\dots$ of compact sets such that  
$K_i\subseteq K_{i+1}^\circ$ and $\bigcup_{i=1}^\infty K_i=M$.

We consider branched coverings of the form $f: M^m\to N^m$, where $M$ and $N$ are noncompact $m$--manifolds with empty boundary.
As we will see shortly, by relating the number of ends of $M$ and $N$ to the degree of $f$, we can conclude that there are no universal noncompact $n$--bases with $n\in\mathbb{N}$.  However, by restricting the number of ends of $M$ we can still obtain some universality results.  

\begin{proposition}\label{prop:ends-ineq}
    Let $M$ and $N$ be noncompact manifolds with $k$ and $\ell$ ends, respectively, where both $k$ and $\ell$ are finite. If there exists a $d$--fold branched cover $N\to M$, then $\ell\leq dk$.
\end{proposition}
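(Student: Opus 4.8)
The plan is to analyze the restriction of the branched cover to neighborhoods of the ends of $M$. Fix a $d$--fold branched cover $f\colon N\to M$ with branch locus $B\subset M$ of codimension $2$. First I would choose a compact exhaustion $K_0\subseteq K_1\subseteq\cdots$ of $M$ so that each $K_i$ is large enough that $M\setminus K_i$ has exactly $k$ connected components, each one an open neighborhood of a distinct end of $M$; this is possible precisely because $M$ has finitely many ends. Since $B$ is a closed subpolyhedron and $f$ is proper (a branched covering of finite degree is proper), $f^{-1}(K_i)$ is compact, so for suitable large $i$ the complement $N\setminus f^{-1}(K_i)$ has exactly $\ell$ components, each a neighborhood of an end of $N$. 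Enlarging $i$ further, I may also assume $K_i$ contains a neighborhood of each of the finitely many singular points of $B$ of cone type, so that over $M\setminus K_i$ the map $f$ looks locally like the standard model $(x,z)\mapsto(x,z^r)$ along $B$.

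Next I would pass to the honest covering space obtained by deleting the branch data. Let $W = M\setminus (K_i\cup B)$, which has $k$ components $W_1,\dots,W_k$ (one per end of $M$, possibly with $B$--punctures, but still connected since $B$ has codimension $2$). Then $f^{-1}(W)\to W$ is a genuine (unbranched) $d$--fold covering map, so over each $W_j$ the preimage $f^{-1}(W_j)$ decomposes into some number $c_j$ of connected components, with $1\le c_j$ and $\sum_{j=1}^k c_j \le dk$ — in fact each $W_j$ contributes at most $d$ components, since the fibre has $d$ points. The key geometric point is now that every end of $N$ gives rise to at least one component among the $f^{-1}(W_j)$: an end of $N$ is represented by a nested sequence $V_1\supset V_2\supset\cdots$ with empty total intersection, and since $f^{-1}(K_i)$ is compact, eventually $V_t$ is disjoint from $f^{-1}(K_i)$, hence lies in $f^{-1}(W_j)$ (after also noting that re-inserting the codimension-$2$ set $f^{-1}(B)$ does not change the count of ends or of components) for some $j$, and $V_t$ lies in a single component of that preimage. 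Therefore $\ell$, the number of ends of $N$, is at most the total number of components of $f^{-1}(M\setminus K_i)$, which is at most $dk$.

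The step I expect to require the most care is the bookkeeping at the branch locus: I need to be sure that deleting $B$ (to get an honest cover) and then re-inserting it does not create or destroy ends or components. This is where I would invoke, for the local structure, Piergallini's normal form recalled earlier in the paper — near $B$ the cover is modeled on $(x,z)\mapsto (x,z^r)$ or a cone on a branched cover of $S^{m-1}$ — to argue that $f^{-1}(W_j)$ and the corresponding piece of $N$ with the branch set put back in have the same number of connected components (codimension-$2$ deletion does not disconnect a connected PL manifold, and does not change the end count), so the inequality $\ell \le \sum_j c_j \le dk$ is unaffected. A clean way to package this is to note that $N\setminus f^{-1}(K_i)$ is a manifold with finitely many components each of which is an open neighborhood of one or more ends, that the number of such components is at most $dk$ by the covering-space count above, and that each end of $N$ is eventually contained in exactly one such component; hence $\ell\le dk$, as claimed.
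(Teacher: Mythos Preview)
Your argument is correct and follows essentially the same strategy as the paper's proof: pull back a compact exhaustion of $M$ through $f$, and bound the number of components of $N\setminus f^{-1}(K_i)$ by $d$ times the number of components of $M\setminus K_i$.

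The paper's version is shorter because it skips your detour through the unbranched cover. Once you observe that $f$ restricts to a $d$--fold \emph{branched} cover $N\setminus f^{-1}(K_i)\to M\setminus K_i$, the bound ``preimage of a connected set has at most $d$ components'' is immediate (each component surjects onto the base with positive local degree, and the degrees sum to $d$); there is no need to delete $B$, invoke Piergallini's local models, and then argue that re-inserting a codimension--$2$ set does not change the component count. Relatedly, your claim that $K_i$ can be enlarged to contain all cone-type singular points of $B$ is both unjustified in the noncompact setting (there need not be finitely many) and unnecessary for the argument.
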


\begin{proof}
We begin with a compact exhaustion of $M$ by compact sets $K_0\subseteq K_1\subseteq K_2\subseteq\cdots \subseteq M$, where the $K_j$ are chosen so that each connected component of $M\setminus K_j$ is not contained in a compact subset of $M$. (If a component $U$ of $M \setminus K_j$ has compact closure, then for all $i\ge j$ replace $K_i$ with $K_i \cup \overline{U}$. Note that after a finite number of steps all $K_i$ will remain unchanged by this procedure.) If $M$ has $k$ ends, this implies that $M\setminus K_j$ has exactly $k$ connected components for sufficiently large~$j$.

Now suppose $f:N\to  M$ is a $d$--fold branched cover, and let $E_i=f^{-1}(K_i)$. Then $E_0\subseteq E_1\subseteq \cdots$ is a compact exhaustion of $N$ as above. Since $f_|: N\setminus E_i \to M\setminus K_i$ is a $d$--fold branched cover, the preimage under $f$ of every connected component of $M\setminus K_i$ has at most $d$ components. For $i$ large enough, the components of $M\setminus K_i$ and $N\setminus E_i$ correspond to the ends of $M$ and $N$, respectively. Therefore, the number of ends of $N$ is at most $dk$. 
\end{proof}

\begin{corollary}
    Let $N$ be an $m$--manifold with $k$ ends. Then any branched cover $N^m \rightarrow \mathbb{R}^m$ has degree at least $k$.
\end{corollary}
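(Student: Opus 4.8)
The final statement is an immediate consequence of Proposition~\ref{prop:ends-ineq} applied to the base $\mathbb{R}^m$, which has exactly one end, so the proof plan is essentially a one-line specialization with one small point to verify.

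\medskip

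\textbf{Plan.} I would begin by recording that $\mathbb{R}^m$ is a connected noncompact $m$--manifold with exactly one end: any compact exhaustion $K_0\subseteq K_1\subseteq\cdots$ of $\mathbb{R}^m$ may be taken to consist of closed balls, and the complement of a large ball is connected (it is homotopy equivalent to $S^{m-1}$, which is connected since $m\geq 2$). Hence $\ell=1$ in the notation of Proposition~\ref{prop:ends-ineq}. Next I would observe that the hypothesis of the corollary — that $N$ has $k$ ends — together with the existence of a branched cover $f\colon N\to\mathbb{R}^m$ places us exactly in the setting of Proposition~\ref{prop:ends-ineq} with $M=\mathbb{R}^m$, $k$ replaced by $\ell=1$, and the roles arranged so that the covered manifold has one end and the covering manifold has $k$ ends. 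Applying the inequality $\ell\le dk$ of that proposition in this configuration — where now the "$N$" of the proposition is our $N$ with $k$ ends and the "$M$" is $\mathbb{R}^m$ with one end — yields $k\le d\cdot 1=d$, i.e.\ $d\ge k$, which is precisely the claimed bound.

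\medskip

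The only point requiring a sentence of care is the finiteness assumption in Proposition~\ref{prop:ends-ineq}: the proposition as stated assumes both manifolds have finitely many ends. Since $\mathbb{R}^m$ has one end this is automatic on the base side, but $N$ could a priori have infinitely many ends, in which case the statement "$N$ has $k$ ends" should be read with $k=\aleph_0$ and the conclusion "degree at least $k$" forces $d=\aleph_0$; alternatively one restricts to finite $k$, matching the hypothesis of the proposition verbatim. I would handle this by either (i) noting that when $k$ is infinite the inequality $\ell\le dk$ argument of Proposition~\ref{prop:ends-ineq} still shows a finite-degree cover cannot produce infinitely many ends (the preimage of the single end-component of $\mathbb{R}^m\setminus K_i$ has at most $d$ components), so $d$ must be infinite; or (ii) simply stating the corollary for finite $k$ and invoking the proposition as a black box. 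I expect no genuine obstacle here — the "hard part," such as it is, is merely bookkeeping about which manifold plays which role and confirming the one-endedness of $\mathbb{R}^m$; the substantive content lives entirely in Proposition~\ref{prop:ends-ineq}, which we are permitted to assume.
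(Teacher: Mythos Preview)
Your proposal is correct and matches the paper's approach exactly: the paper states this corollary with no proof, treating it as an immediate specialization of Proposition~\ref{prop:ends-ineq} to the one-ended base $\mathbb{R}^m$. Your additional remarks on the infinite-$k$ case and on verifying that $\mathbb{R}^m$ has one end are fine elaborations but go beyond what the paper records.
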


\begin{corollary}
There does not exist an open universal $n$--base for any $n<\infty$. 
\end{corollary}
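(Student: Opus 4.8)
The plan is to derive a contradiction from the mere existence of an open universal $n$--base $M^m$ with $n<\infty$ by exhibiting a single open $m$--manifold $N$ with too many ends. By Definition~\ref{def:uni--base}, a universal $n$--base $M$ is a manifold onto which \emph{every} open $m$--manifold (of the appropriate CAT, but in particular every open manifold when $M$ is open) admits an $n$--fold branched covering map. So I would first observe that $M$ itself, being a noncompact manifold, has some number of ends $k$; there are two cases to handle, $k<\infty$ and $k=\infty$, but in fact it suffices to work with the finite case because any open $m$--manifold with more than one end will already do the job once we control $k$.

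The core step is Proposition~\ref{prop:ends-ineq}: if $M$ has $k$ ends with $k$ finite and $N$ is an open $m$--manifold with $\ell$ ends, $\ell$ finite, admitting a $d$--fold branched cover $N\to M$, then $\ell\le dk$. Thus if $M$ is a universal $n$--base it must $n$--fold cover every open $m$--manifold, and in particular every open $m$--manifold $N$ with finitely many ends must satisfy (number of ends of $N$) $\le nk$. But open $m$--manifolds with arbitrarily many (finite) ends certainly exist --- for each $j\in\mathbb{N}$ one can take, for instance, $\mathbb{R}^m$ with $j$ disjoint open balls removed, or the connected sum of $j$ copies of $S^{m-1}\times\mathbb{R}$ along $\mathbb{R}^m$, yielding a connected open $m$--manifold with exactly $j$ ends. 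Choosing $j>nk$ contradicts the bound $j\le nk$. Hence no such $M$ with $k<\infty$ is a universal $n$--base. If instead $M$ has infinitely many ends, I would note that Proposition~\ref{prop:ends-ineq}'s proof still applies locally: a $d$--fold branched cover can only multiply the number of ends over any fixed compact exhaustion level by at most $d$, so a one-ended open manifold such as $\mathbb{R}^m$ cannot $d$--fold cover $M$ for any finite $d$, since the preimage of a compact exhaustion of $M$ would have infinitely many components and $\mathbb{R}^m$ has only one end --- so again $M$ fails to be a universal $n$--base.

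I expect the only mildly delicate point is the bookkeeping in the infinite-ends case: one wants that if $M$ has infinitely many ends and $f\colon N\to M$ is a $d$--fold branched cover with $d<\infty$, then $N$ has infinitely many ends (indeed at least $\lceil (\text{ends of }M)/d\rceil$ many at each finite stage). This follows by rerunning the argument of Proposition~\ref{prop:ends-ineq}: take a compact exhaustion $K_0\subseteq K_1\subseteq\cdots$ of $M$ with the property that every component of $M\setminus K_j$ has noncompact closure, so that the number of components of $M\setminus K_j$ tends to infinity; pull back to $E_j=f^{-1}(K_j)$, a compact exhaustion of $N$; since $f$ restricts to a $d$--fold branched cover $N\setminus E_j\to M\setminus K_j$, the preimage of each component of $M\setminus K_j$ has between $1$ and $d$ components, so $N\setminus E_j$ has at least (number of components of $M\setminus K_j$)$/d$ components, which is unbounded in $j$. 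Hence $N$ has infinitely many ends. Applying this with $N=\mathbb{R}^m$ (one end) gives the contradiction, since $\mathbb{R}^m$ is an open $m$--manifold that every universal $n$--base must be covered by. Putting the two cases together, no open $m$--manifold can be a universal $n$--base for finite $n$, which is the claim.

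One stylistic remark for the write-up: since Proposition~\ref{prop:ends-ineq} is already stated and proved for the finite-ends case, the cleanest presentation is probably to isolate the needed refinement --- ``if $M$ has infinitely many ends and $N\to M$ is a finite branched cover then $N$ has infinitely many ends'' --- as the key observation, then combine it with Proposition~\ref{prop:ends-ineq} and the existence of open $m$--manifolds with arbitrarily many ends to conclude. The existence of such manifolds is elementary and need only be asserted.
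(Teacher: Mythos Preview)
Your argument is correct and is essentially the intended one: the paper states this corollary without proof, as an immediate consequence of Proposition~\ref{prop:ends-ineq}. Your explicit treatment of the case where $M$ has infinitely many ends goes beyond what the paper makes explicit and is a worthwhile addition, since Proposition~\ref{prop:ends-ineq} as stated assumes both $k$ and $\ell$ are finite.

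Two small remarks. First, your bound in the infinite-ends case is weaker than necessary: since distinct components of $M\setminus K_j$ have disjoint, nonempty preimages under the surjection $f$, the set $N\setminus E_j$ has at least as many components as $M\setminus K_j$ (not merely $1/d$ as many), which already forces $N$ to have infinitely many ends. Second, $\mathbb{R}^m$ with $j$ disjoint \emph{open} balls removed is a manifold with boundary and so is not an open manifold in the sense of the paper; remove $j$ points (or $j$ disjoint closed balls) instead to obtain a connected open $m$--manifold with $j+1$ ends.
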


On the other hand, if we make the natural restriction on the number of ends, we do find some open universal $n$--bases with $n$ finite. Dimensions~2 and~3 are discussed in the rest of this section.  We moreover show in Corollaries~\ref{rthreeuniversal} and~\ref{r2--universal} that $\mathbb{R}^2$ and $\mathbb{R}^3$ are universal $\aleph_0$--bases. (From \cite{piergallinizuddas_open}, it follows that $\mathbb{R}^4$ is also a universal $\aleph_0$--base.)

\subsection{Noncompact surfaces} \label{sec:surfaces}

In this section, we will constructively prove Theorem \ref{thm:surfacenoncompact}.

\surfacenoncompact*

We first discuss an interesting consequence, Corollary~\ref{r2--universal}, which also relies on the following preliminary proposition.

\begin{proposition}\label{prop:r2coveringr2}
    For any $m\ge 2$, There exists a simple $\aleph_0$--fold branched covering map from $\mathbb{R}^m$ to $\mathbb{R}^m$, with branch locus a countably infinite collection of $\mathbb{R}^{m-2}$s.
\end{proposition}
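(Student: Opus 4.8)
The plan is to build the cover explicitly, first in dimension $2$ and then promote it to dimension $m$ by crossing with $\mathbb{R}^{m-2}$. In dimension $2$, I would start from the model simple branched cover $z\mapsto z^2$ of $\mathbb{C}$ over $\mathbb{C}$, which is a $2$-fold cover branched over the single point $0$. Passing to infinite degree requires a ``ladder'' construction: decompose the target $\mathbb{R}^2$ into a bi-infinite sequence of regions meeting along vertical lines (say, the strips $S_j=[j,j+1]\times\mathbb{R}$ for $j\in\mathbb{Z}$), choose a branch point $b_j$ in the interior of $S_j$ for each $j$, and arrange that the monodromy representation $\pi_1(\mathbb{R}^2\setminus\{b_j : j\in\mathbb{Z}\})\to S_{\mathbb{Z}}$ (the group of permutations of a countable set $\{\dots,v_{-1},v_0,v_1,\dots\}$) sends the meridian of $b_j$ to the transposition $(v_j\ v_{j+1})$. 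Concretely one picks a standard generating set of meridians $\mu_j$ (loops based at a point on the far left, going around $b_j$ once) and defines $\rho(\mu_j)=(v_j\ v_{j+1})$; since $\pi_1$ of a plane minus a discrete set is free on these meridians, $\rho$ is well defined with no relations to check. The resulting branched cover $N\to\mathbb{R}^2$ is simple by construction, and connected because the transpositions $(v_j\ v_{j+1})$ generate a transitive subgroup of $S_{\mathbb{Z}}$ (any two sheets are joined by a finite chain of transpositions).

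The remaining point in dimension $2$ is to identify the total space $N$ with $\mathbb{R}^2$. Here I would build $N$ directly as an increasing union: let $X_n$ be the preimage of the disk $D_n$ (or rectangle) containing exactly the branch points $b_{-n},\dots,b_{n}$. Over $D_n$ the monodromy into $S_{\{v_{-n},\dots,v_{n+1}\}}$ generated by $n$ of the standard transpositions $(v_j\,v_{j+1})$, $-n\le j\le n$, describes a connected simple branched cover of the disk with $2n+1$ branch points and $2n+2$ sheets; a Riemann–Hurwitz count gives $\chi(X_n)=(2n+2)\cdot 1-(2n+1)=1$, and one checks $X_n$ is a compact orientable surface with nonempty boundary, hence a disk. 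One must also check that $X_n$ is embedded in $X_{n+1}$ as a disk whose complement $X_{n+1}\setminus X_n$ is a collar (an annulus): the cover restricted to $D_{n+1}\setminus D_n^\circ$, an annulus containing the two branch points $b_{\pm(n+1)}$, is a connected simple branched cover, and a chase of the monodromy shows its total space is an annulus meeting $\partial X_n$ along one boundary circle. Then $N=\bigcup_n X_n$ is an increasing union of disks $X_n\subset X_{n+1}^\circ$ with annular complements, which forces $N\cong\mathbb{R}^2$ (an exhaustion of a surface by nested disks with collar complements is $\mathbb{R}^2$; alternatively $N$ is a connected, simply connected, noncompact, boundaryless surface, hence $\mathbb{R}^2$ by the classification of surfaces).

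For general $m\ge 2$, I would simply take the product: let $g\colon \mathbb{R}^2\to\mathbb{R}^2$ be the $\aleph_0$-fold simple branched cover just constructed, with branch locus $\{b_j\}_{j\in\mathbb{Z}}$, and set $f = g\times \operatorname{id}_{\mathbb{R}^{m-2}}\colon \mathbb{R}^2\times\mathbb{R}^{m-2}\to\mathbb{R}^2\times\mathbb{R}^{m-2}$, i.e. $f\colon\mathbb{R}^m\to\mathbb{R}^m$. Away from $\{b_j\}\times\mathbb{R}^{m-2}$ this is the product of a covering map with a homeomorphism, hence a covering map of the same degree $\aleph_0$; near a point of $\{b_j\}\times\mathbb{R}^{m-2}$ it has exactly the local form $(x,z)\mapsto (x,z^2)$ on one sheet and $(x,z)\mapsto(x,z)$ on the others, matching Definition~\ref{def:branchedcovering} and the local model of Piergallini, so $f$ is a simple branched covering map. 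Its branch locus is $\bigsqcup_{j\in\mathbb{Z}}\{b_j\}\times\mathbb{R}^{m-2}$, a countably infinite collection of $\mathbb{R}^{m-2}$'s, and the total space is visibly $\mathbb{R}^m$.

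The main obstacle is the second step: verifying that the total space in dimension $2$ is really $\mathbb{R}^2$. The monodromy bookkeeping needed to show that each $X_n$ is a disk and that each successive shell $X_{n+1}\setminus X_n^\circ$ is an annulus collaring $\partial X_n$ is the only genuinely non-formal part — everything else (freeness of $\pi_1$ of the plane minus a discrete set, transitivity of the monodromy, the product construction for general $m$) is routine. An alternative that sidesteps the explicit surface identification is to invoke Theorem~\ref{thm:surfacenoncompact} with $\Sigma^2=\mathbb{R}^2$ (which has one end), obtaining a simple $\aleph_0$-fold branched cover $\mathbb{R}^2\to\mathbb{R}^2$ directly, and then only the product step remains; I would mention this as the quick route if Theorem~\ref{thm:surfacenoncompact} is allowed to precede this proposition in the logical order.
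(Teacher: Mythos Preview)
Your main construction is essentially the paper's: define the monodromy $\pi_1(\mathbb{R}^2\setminus\{\text{branch points}\})\to S_\infty$ by sending the meridian of the $j$th branch point to the transposition $(j\ j{+}1)$, identify the total space with $\mathbb{R}^2$ via an exhaustion by disks (the paper places this argument in the discussion immediately following the proof, with $E_i\to D_i$ an $(i{+}1)$-fold simple cover), and then take the product with $\mathrm{id}_{\mathbb{R}^{m-2}}$. The only cosmetic difference is that the paper indexes branch points by $\mathbb{N}$ rather than $\mathbb{Z}$.

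One correction: your proposed shortcut via Theorem~\ref{thm:surfacenoncompact} does not work. Applied to $\Sigma=\mathbb{R}^2$ (which has $k=1$ end), that theorem yields a simple branched cover of degree $\min\{2k,\aleph_0\}=2$, not $\aleph_0$. In the paper's logic, Proposition~\ref{prop:r2coveringr2} is exactly the ingredient one composes with the finite-degree covers of Theorem~\ref{thm:surfacenoncompact} to manufacture $\aleph_0$-fold covers in Corollary~\ref{r2--universal}, so it cannot be derived from that theorem.
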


\rtwouniversal*

\begin{proof}[Proof of Corollary~\ref{r2--universal}  
from Theorem \ref{thm:surfacenoncompact} and Proposition \ref{prop:r2coveringr2}]
    Let $\Sigma$ be an open surface. If $\Sigma$ has infinitely many ends, then by Theorem \ref{thm:surfacenoncompact}, $\Sigma$ admits an $\aleph_0$--fold branched cover over $\mathbb{R}^2$. If $\Sigma$ has finitely many ends, then by the same theorem there exists a finite--fold branched cover $f:\Sigma\to\mathbb{R}^2$. Let $g:\mathbb{R}^2\to\mathbb{R}^2$ be the $\aleph_0$--fold branched cover constructed in Proposition~\ref{prop:r2coveringr2}, with branch set disjoint from the branch locus of $f$. Then $g\circ f:\Sigma\to\mathbb{R}^2$ is a (potentially nonsimple) $\aleph_0$--fold branched cover. If desired, perturb $g\circ f$ to obtain a simple branched cover.
\end{proof}

\begin{proof}[Proof of Proposition \ref{prop:r2coveringr2}]
We will construct the required $\aleph_0$--fold cover $\mathbb{R}^2\to \mathbb{R}^2$, which will have simple branch points at the integer values along the positive $x$-axis.  Begin by fixing a basepoint $(1,0)\in \mathbb{R}^2$. For all $j \in \mathbb{N}$, let $\gamma_j$ denote the meridian about $(0,j)$ represented by the obvious loop whose image in the plane is the circle of radius $\frac{1}{4}$ centered at $(0,j)$, together with the straight line from (1,0) to the point $(\frac{1}{4}, j)$. Then $\{[\gamma_j]\mid j\in\mathbb{N}\}$ generates $\pi_1(\mathbb{R}^2\setminus\{(0,i)\mid i\in\mathbb{N}\})$. Let $S_{\mathbb{N}}$ denote the symmetric group on $\mathbb{N}$, i.e.\ the group that permutes the labels in the set $\{1,2,3,\ldots\}$. Now let $\rho$ be the homomorphism from $\pi_1(\mathbb{R}^2\setminus\{(0,i)\mid i\in\mathbb{N}\})$ to $S_{\mathbb{N}}$ determined by mapping $[\gamma_j]$ to the transposition exchanging $j$ and $j+1$. This determines a simple $\aleph_0$--fold branched cover $f$ over $\mathbb{R}^2$, with total space $\mathbb{R}^2$ as required.

Finally, for any $m\ge 2$, let $F:\mathbb{R}^m\to\mathbb{R}^m$ be obtained by taking the product of $f$ with the identity map on $\mathbb{R}^{m-2}$. The resulting $F$ is an $\aleph_0$--fold simple branched cover.
  \end{proof}
  
  One can, perhaps, better visualize the covering constructed in Proposition~\ref{prop:r2coveringr2} by thinking of it in terms of compact exaustions of the base and total space.  To do this, decompose the base and total space $\mathbb{R}^2$s  as nested unions of closed disks $D^2_i$ and $E^2_i$ respectively, for $i\in \mathbb{N}$.  More precisely, let let $D_i$ and $E_i$ be the closed disks centered at the origin, of radius $i+\tfrac{1}{2}$, where we think of $D_i$ as living in the base space $\mathbb{R}^2$ while $E_i$ lives in the total space $\mathbb{R}^2$.  For each $i$, there is an $(i+1)$--fold covering map $f_i: E_i\to D_i$ which is determined by labeling the point $(0,j)$  in the base space with the transposition $(j,j+1) \in S_{i+1}$, for $1 \leq j \leq i$.  These covering maps respect the natural inclusions $D_i \subset D_{i+1}$ and $E_i \subset E_{i+1}$, in the sense that $f_i: E_i \to D_i$ is the restriction to $E_i$ of  $f_{i+1}: E_{i+1} \to D_{i+1}$.  It follows then that the $f_i$ define an $\aleph_0$--fold cover $\mathbb{R}^2\to \mathbb{R}^2$.  We illustrate this construction in Figures~\ref{fig:rcoveringr} and \ref{fig:rcoveringrside}. Above each point in the branch locus lies countably infinitely many points in the branch set, one in each $E_i$, illustrated as one row of points in Figure \ref{fig:rcoveringr} (left). In this row, $f$ has local degree two at exactly one point in the branch set, and degree one at all other branch points.

\begin{figure} [h!]
\centering
\labellist
\pinlabel{\scriptsize{2}} at 85 109
\pinlabel{\scriptsize{3}} at 85 80
\pinlabel{\scriptsize{4}} at 85 52
\pinlabel{\scriptsize{5}} at 85 21.5
\pinlabel{\scriptsize{6}} at 85 -9
\pinlabel{$f$} at 510 380
\endlabellist
\includegraphics[width=100mm]{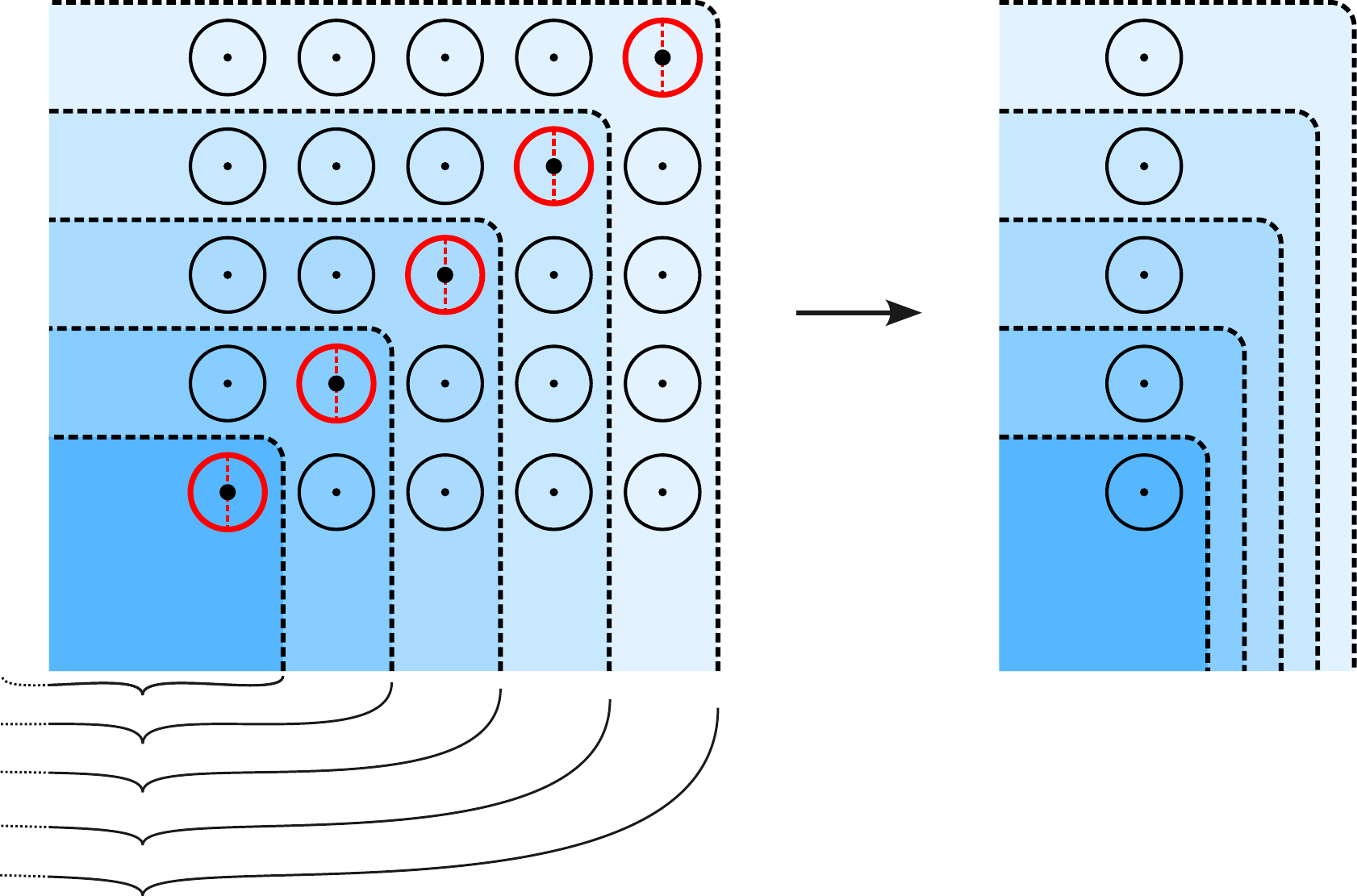}
    \caption{The $\aleph_0$--fold branched covering constructed in Proposition~\ref{prop:r2coveringr2}. This is a branched covering map $f$ from $\mathbb{R}^2$ to $\mathbb{R}^2$. The branch set (left) and branch locus (right) each contain countably infinitely many isolated points. One row of such points on the left cover a single point on the right. Above each point in the branch locus lies one point at which $f$ has degree two, indicated in bold. At all other points in the branch set, $f$ has degree one. Thus, $f$ is a simple branched cover. The overlapping highlighted portions are the elements $E_i, D_i$ of the two exhaustions. We indicate the degree of the restriction $f_i: E_i\to D_i$. The total map is an $\aleph_0$--fold cover. Note that while each point in the image of $f$ has infinitely many preimages, only finitely many can be contained in a bounded subset, as $f$ is a proper map.}\label{fig:rcoveringr}
\end{figure}

\begin{figure}
    \centering
    \labellist
    \pinlabel{$E_3$} at 285 215
    \pinlabel{$E_2$} at 170 178
    \pinlabel{$E_1$} at 90 138
    \pinlabel{$D_3$} at 285 25
    \pinlabel{$D_2$} at 170 25
    \pinlabel{$D_1$} at 90 25
     \pinlabel{$f_1$} at 48 62
       \pinlabel{$f_2$} at 138 62
       \pinlabel{$f_3$} at 250 62
    \endlabellist

\includegraphics[width=0.5\textwidth]{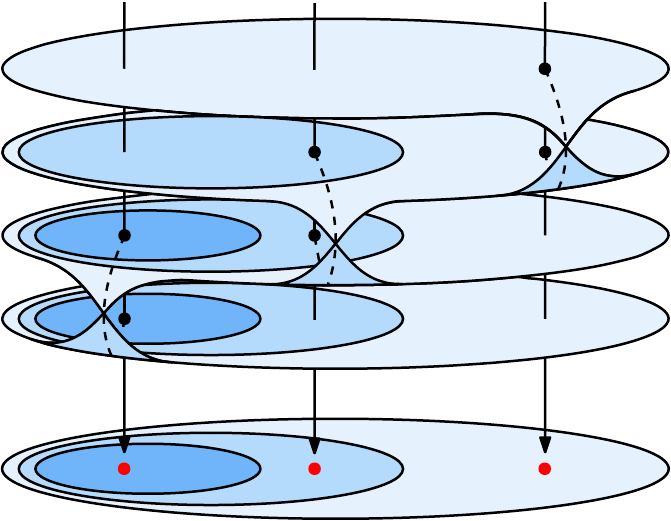}

\caption{A side view of the $\aleph_0$--fold branched covering constructed in Proposition~\ref{prop:r2coveringr2}.  The base $\mathbb{R}^2$ is composed of the union of the disks $D_i$ at the bottom of the figure, while the total space $\mathbb{R}^2$ is represented by the stack of disks above it. The branch locus is represented by the red dots in the base space, and the various $D_i$ and $E_i$ disks are differentiated by their relative shades of blue. The preimage of a point in the branch locus is a countably infinite set of points lying directly above it. Vertical pairs of black dots in the total space are identified. When traveling along a loop around a branch point, crossing a dashed lines corresponds to switching sheets in the total space.}
\label{fig:rcoveringrside}
\end{figure}

We now turn to the proof of Theorem \ref{thm:surfacenoncompact}. Our strategy is to decompose an open surface $\Sigma$ into basic pieces that admit compatible simple branched covers over $\mathbb{R}^2$. We make the following statements dimension-independent, so that we may use them in a 3--dimensional setting later.

\begin{proposition}\label{prop:surfacestandardex}
    Let $\Sigma$ be an open $m$--manifold. 
    Then $\Sigma$ admits a compact exhaustion $E_1\subset E_2\subset \cdots$ such that $E_1$ is an $m$-ball, and for all $j>1$ every connected component of $\overline{E_j\setminus E_{j-1}}$ is one of the following:
    
   \noindent {\it (a)} An $m$--manifold with two boundary components: one in $\partial E_j$ and one $\partial E_{j-1}$. 
    
   \noindent  {\it (b)} An $m$--manifold with three boundary components: two in $\partial E_j$ and one in~$\partial E_{j-1}$.
\end{proposition}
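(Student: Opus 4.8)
The plan is to start from an arbitrary compact exhaustion and repeatedly modify it so that the successive "shells" $\overline{E_j \setminus E_{j-1}}$ become the two allowed building blocks. First I would take any compact exhaustion $K_0 \subset K_1 \subset \cdots$ of $\Sigma$ with $K_i \subset K_{i+1}^\circ$ and $\bigcup K_i = \Sigma$. As in the proof of Proposition~\ref{prop:ends-ineq}, I would first arrange that no component of $\Sigma \setminus K_j$ has compact closure (absorbing any such component into all later $K_i$). Next, I would thicken and smooth: replacing each $K_i$ by a regular neighborhood, I may assume every $K_i$ is a compact $m$--manifold with boundary, each $\overline{K_j \setminus K_{j-1}}$ is a compact cobordism between $\partial K_{j-1}$ and $\partial K_j$, and (by connecting up components along paths in the open manifold) that each $K_i$ is connected. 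I would also pass to a subsequence so that $K_1$ can be taken to contain a ball; in fact by choosing $K_1$ to be a small ball and re-indexing, $E_1 := K_1$ is an $m$--ball.

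The heart of the argument is to refine each cobordism shell $\overline{K_j \setminus K_{j-1}}$ into pieces of type (a) — a cobordism with exactly one boundary component on each side — and type (b) — a "pair of pants" cobordism with one boundary component on the inside and two on the outside. The key topological input is that any compact connected $m$--manifold $W$ with boundary, once we fix which boundary components are "incoming" and which are "outgoing," can be cut along properly embedded separating hypersurfaces into such elementary pieces; more concretely, I would take a Morse function on the shell $\overline{K_j\setminus K_{j-1}}$ with no critical points (which exists after pushing all handles into thin collars) so that its level sets interpolate between $\partial K_{j-1}$ and $\partial K_j$, but control the number of components of the level sets. When the number of boundary components increases from the inner to the outer side, I split the increase into steps of $+1$ by inserting intermediate exhaustion terms whose boundary has one more component than the previous; each inserted shell is then of type (b). When the count stays the same, the shell can be taken of type (a). When the count decreases, I instead re-route: since $\Sigma \setminus K_{j-1}$ has no compact components, two outgoing components that merge can be handled by first enlarging $K_{j-1}$ to absorb the "waist," reducing to the nondecreasing case. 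Thus after subdividing and re-indexing I obtain an exhaustion $E_1 \subset E_2 \subset \cdots$ in which each $\overline{E_j \setminus E_{j-1}}$ has exactly one component (after further subdividing so that each shell is connected), and that component is of type (a) or (b).

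The main obstacle I anticipate is the bookkeeping needed to guarantee that, after all these insertions, the process still terminates in the sense that the new sequence is genuinely an exhaustion — i.e. that between consecutive original terms $K_{j-1}, K_j$ only finitely many new terms are inserted, and that $\bigcup E_i = \Sigma$ still holds. This is where the condition "no component of $\Sigma\setminus K_j$ has compact closure" does real work: it bounds the number of boundary components one must merge or split, so each original shell is subdivided into finitely many elementary pieces. A secondary subtlety is ensuring connectivity of each elementary shell; if a shell is disconnected, I would first enlarge the inner term along arcs in $\Sigma$ to connect the components, which again terminates because $\Sigma$ has at most countably many ends and the inner boundary has finitely many components at each finite stage. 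Once these finiteness points are nailed down, the verification that each piece literally has the stated boundary structure (two boundary components for (a), three for (b), split correctly between $\partial E_j$ and $\partial E_{j-1}$) is immediate from the construction.
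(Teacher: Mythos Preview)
Your overall strategy --- start from an arbitrary exhaustion, make $E_1$ a ball, then modify the shells into elementary pieces --- matches the paper's. But the implementation has a real gap. You write that you ``take a Morse function on the shell $\overline{K_j\setminus K_{j-1}}$ with no critical points (which exists after pushing all handles into thin collars).'' This is false: a cobordism admits a Morse function without critical points if and only if it is a product, and generic shells in an exhaustion of an open $m$--manifold are not products. Whatever ``pushing handles into thin collars'' means, it cannot produce such a function, and once this step fails you have no mechanism left for actually cutting the shell into type~(a) and type~(b) pieces. A second problem is your treatment of the ``count decreases'' case: you propose to ``enlarge $K_{j-1}$ to absorb the waist,'' but this modifies a term that has already been processed, so the induction is no longer well-founded and you have not argued why the procedure stabilizes.

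The paper avoids both issues with two concrete, forward-only moves. First, whenever two boundary components $C_1,C_2$ of $E_j$ lie in the same component of $\Sigma\setminus\mathring{E_j}$, it chooses an arc $\gamma$ from $C_1$ to $C_2$ in that exterior and adds nested tubular neighborhoods of $\gamma$ to $E_j, E_{j+1},\ldots, E_{j+r-1}$ (where $E_{j+r}$ already contains $\gamma$); this reduces $|\partial E_j|$ by one and never touches $E_{j-1}$. After repeating, every component of $\overline{E_j\setminus E_{j-1}}$ meets $\partial E_{j-1}$ in exactly one component. Second, if such a component has three or more boundary components in $\partial E_j$, the paper peels off a ``pair of pants'' neighborhood $P$ of two of them together with a connecting arc, and inserts $\overline{E_j\setminus P}$ as a new exhaustion level. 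No Morse theory is needed, and each step only inserts or enlarges terms with index $\ge j$, so finiteness at each stage is immediate.
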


\begin{remark} \label{rem:counting-ends}
In Proposition \ref{prop:surfacestandardex}, since a component of $\overline{E_j\setminus E_{j-1}}$ has exactly one boundary in $\partial E_{j-1}$, an end of $\Sigma$ exactly corresponds to a sequence $C_1,C_2,\ldots$, where $C_j$ is a boundary component of $E_j$ that lies in the same component of $\overline{E_{j}\setminus E_{j-1}}$ as $C_{j-1}$. When the component of $\overline{E_{j}\setminus E_{j-1}}$ containing 
$C_{j-1}$ has just two boundary components, then $C_j$ is determined by $C_{j-1}$. In contrast, when the component of $\overline{E_{j}\setminus E_{j-1}}$ containing $C_{j-1}$ has three boundary components, then there are two choices for $C_j$. Thus, when there are a finite number $k$ of components $E_j$ with three boundaries, the number of ends of $\Sigma$ is precisely $k+1$. When there are infinitely many components of $\overline{E_{j}\setminus E_{j-1}}$ (across all $j$) with three boundary components, then $\Sigma$ has infinitely many ends.
\end{remark}

\begin{proof}[Proof of Proposition \ref{prop:surfacestandardex}]
    Begin with an arbitrary compact exhaustion $E_1\subseteq E_2\subseteq\cdots \subseteq \Sigma$. Let $D$ be a closed ball in the interior of $E_1$. Shift the indices of the $E_j$ each up by one and redefine
    $E_1:=D$.

Now for each $j=2,3,\ldots$, we perform the following sequence of operations. 
Note 
that after we finish performing these operations for some value of $j$, the compact set $E_j$ remains unchanged at all subsequent steps.
\begin{figure}
    \centering
    \labellist
    \pinlabel{$\gamma$} at 178 373
    \pinlabel{$E_j$} at 180 85
    \pinlabel{$E_{j+1}$} at 70 230
        \pinlabel{$C_2$} at 205 151
    \pinlabel{$C_1$} at 55 151
    \pinlabel{$E_{j+2}$} at 85 340
    \pinlabel{$E_j$} at 600 85
    \pinlabel{$E_{j+1}$} at 490 230
    \pinlabel{$E_{j+2}$} at 505 340
    \endlabellist
\includegraphics[width=100mm]{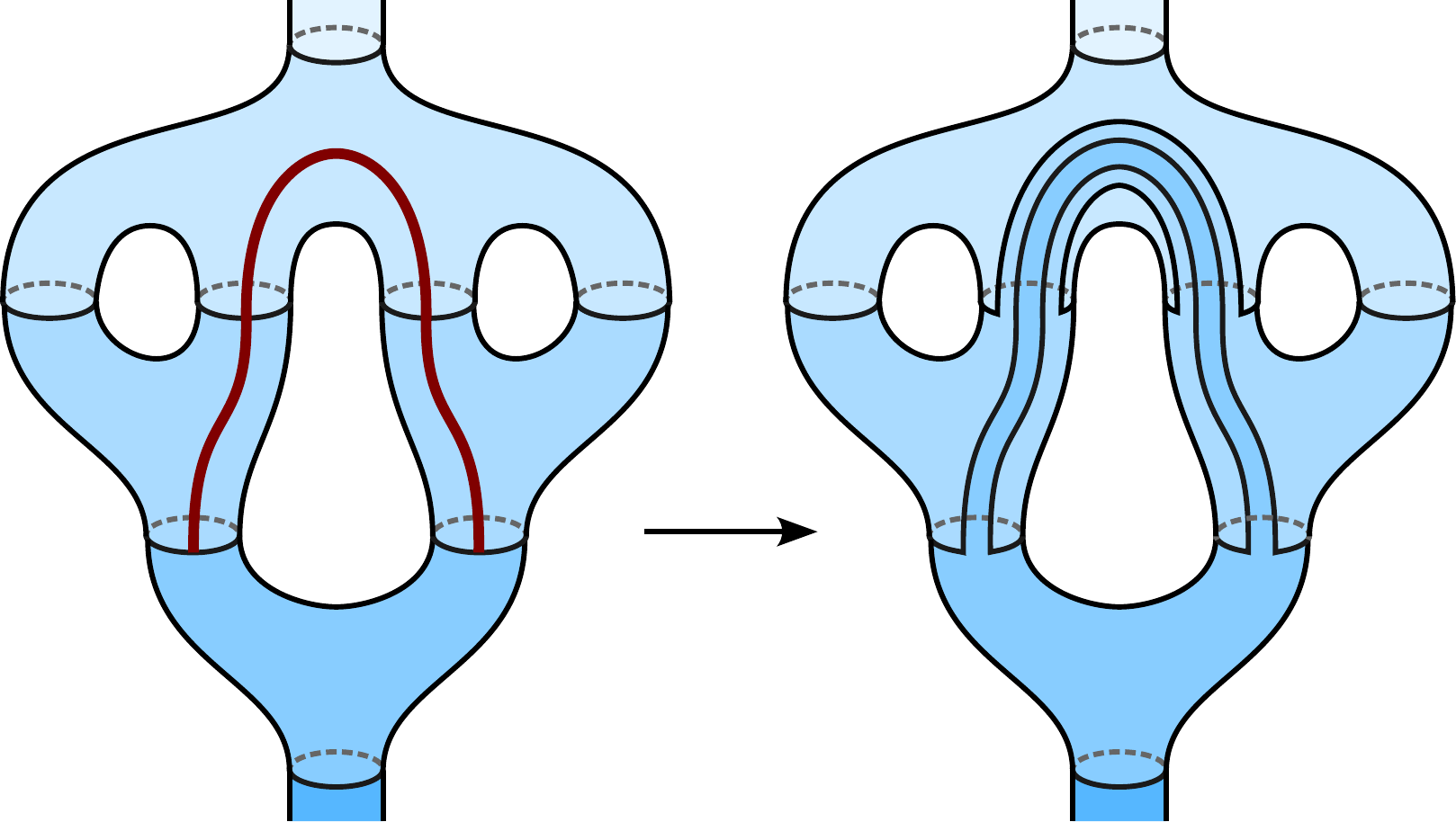}\caption{Left: a portion of an exhaustion $E_1\subset E_2\subset\cdots$ of an open manifold $\Sigma$. Here, $E_j$ has two boundary components $C_1, C_2$ that are in the same component of $\Sigma\setminus\mathring{E_j}$. We indicate an arc $\gamma$ in $\Sigma\setminus\mathring{E_j}$ connecting $C_1$ and $C_2$. In this example, $\gamma$ is contained in $E_{j+2}$. On the right, we form a new exhaustion in which replace $E_j$ and $E_{j+1}$ with $E_j\cup A_0$ and $E_{j+1}\cup A_1$ respectively, where $A_0,A_1$ are closed tubular neighborhoods of $\gamma$ with $A_0\subset\mathring{A}_1$ and $A_1\subset\mathring{E}_{j+2}$. This operation decreases the number of boundary components of $E_j$ by one.}
\label{fig:oneboundaryperend}
\end{figure}
\begin{enumerate}
    \item (Schematically illustrated in Figure \ref{fig:oneboundaryperend}.) Suppose $C_1,C_2$ are distinct boundary components of $E_j$ that lie in the same component of $\Sigma\setminus\mathring{E_j}$. Let $\gamma$ be an arc whose interior is contained in $\Sigma\setminus\mathring{E_j}$, from a point in $C_1$ to a point in $C_2$. Let $r$ be the smallest natural number so that $\gamma$ is contained in the interior of $E_{j+r}$. 
    Let $A_0\subsetneq A_1\subsetneq \cdots \subsetneq A_{r-1}$ be closed tubular neighborhoods of $\gamma$, with $A_{r-1}$ in the interior of $E_{n+r}$. For $i=0,1,\ldots, r-1$, redefine $E_{j+i}:=E_{j+i}\cup A_i$. This preserves the property that $E_i\subset E^\circ_{i+1}$ for all $i$.  Moreover, the operation just performed decreases the number of boundary components of $E_j$ by one. Repeat until no two boundary components of $E_j$ lie in the same component of $\Sigma\setminus\mathring{E_j}$.  In particular, this ensures that the number of boundary components of $\Sigma\setminus\mathring{E_j}$ can not decrease as $j$ increases.
    \item (Schematically llustrated in Figure \ref{fig:threeboundary}.) Suppose $C_1,C_2,C_3$ are boundary components of $E_j$ that lie in the same component of $\overline{E_j\setminus E_{j-1}}$. Let $P$ be a submanifold of the connected manifold $\overline{E_j\setminus E_{j-1}}$ obtained by taking a regular neighborhood of $C_2\sqcup C_3 \cup\gamma$, where $\gamma$ is an arc in $E_j$ connecting $C_2$ and $C_3$. (In dimension two, $P$ is a pair of pants.) Let $A:=\overline{E_j\setminus P}$. Shift the indices of $E_{j},E_{j+1},\ldots$ each up by one and redefine $E_j:=A$. This has the effect of decreasing by one the number of boundary components of $E_j$, and after repeated application we obtain a compact set $E_j$ which has three boundary components, distributed in a way which satisfies the conclusions of the proposition. 
\end{enumerate}
Repeating these two procedures sequentially on $E_j$, for $j = 2, 3, \ldots$ produces an exhaustion as described in the statement above.
\end{proof}

\begin{figure}
    \centering
    \labellist
    \pinlabel{$E_j$} at 130 90
    \pinlabel{$E_{j-1}$} at 130 -15
    \pinlabel{$E_{j+1}$} at 130 205
    \pinlabel{$E_j$} at 533 90
    \pinlabel{$E_{j-1}$} at 533 -15
    \pinlabel{$E_{j+1}$} at 533 140
    \pinlabel{$E_{j+2}$} at 533 205
    \endlabellist
    \vspace{.05in}
\includegraphics[width=100mm]{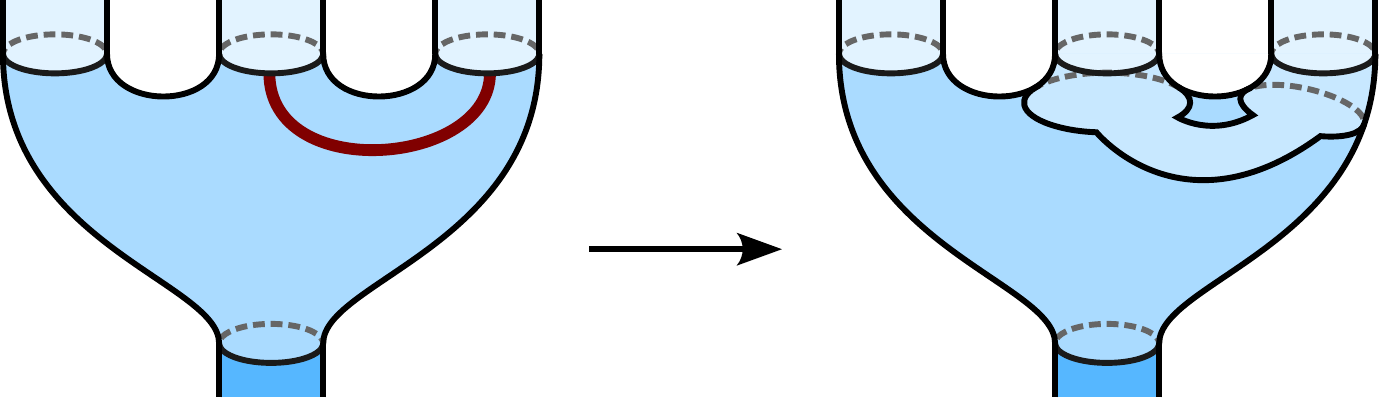}
\vspace{.05in}
\caption{Left: a portion of an exhaustion $E_1\subset E_2\subset\cdots$ of an open manifold $\Sigma$. Note that $E_j$ has three boundary components in the same component of $\overline{E_j\setminus E_{j-1}}$. We indicate an arc in $E_j\setminus E_{j-1}$ connecting two of these boundary components. Let $P$ denote a neighborhood of the arc and the two boundary components it meets, and let $A:=\overline{E_j\setminus P}$. On the right, we form a new exhaustion in which we shift the indices of $E_j, E_{j+1},\ldots$ up by one and set $E_j:=A$.}
\label{fig:threeboundary}
\end{figure}

\begin{figure}[h!]
    \centering
    \labellist
    \pinlabel{\Large$f$} at 375 230
    \endlabellist
    \includegraphics[width=100mm]{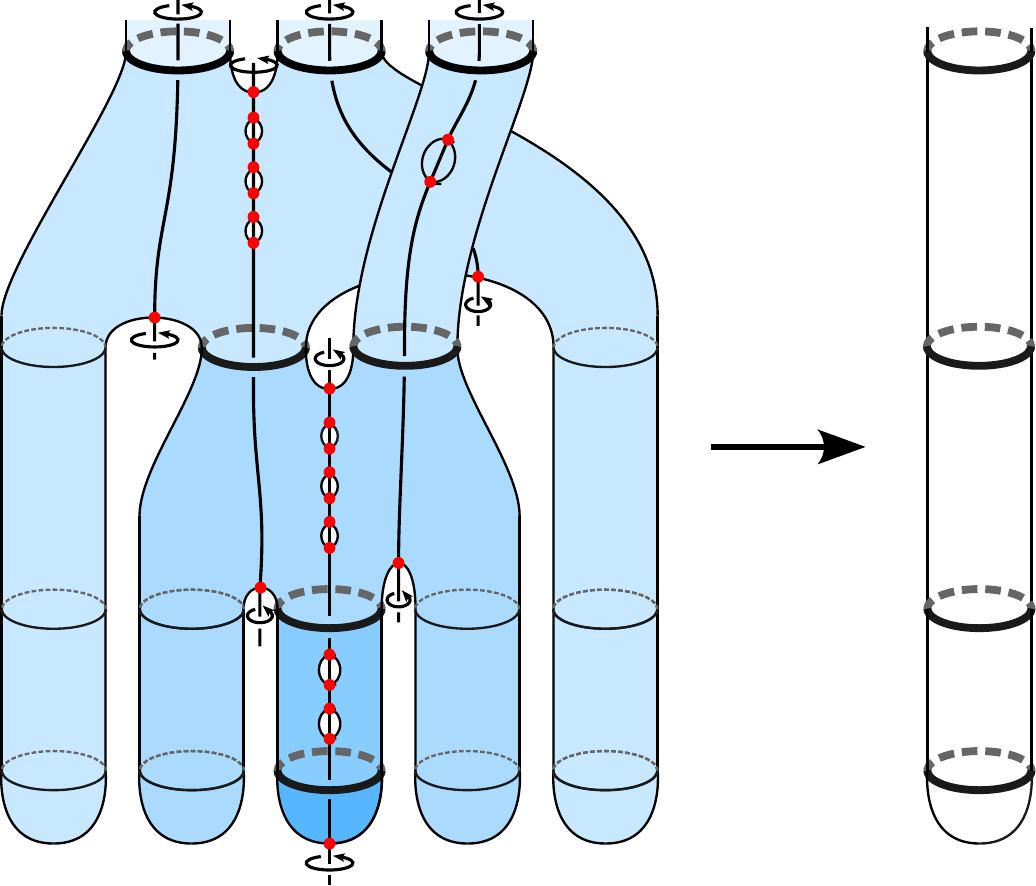}\caption{A simple branched cover $f$ of an open surface $\Sigma$ (left) over $\mathbb{R}^2$ (right). In this figure, a point $p\in\Sigma$ and $f(p)\in\mathbb{R}^2$ are represented by horizontal translates for all $p\in \Sigma$. In this schematic, $E_1$ is a dark disk (left surface, bottom row, center minimum), as in every exhaustion in our proof; $E_2$ is a genus--2 surface with one boundary component  (left surface, bottom two rows, center); $E_3$ has genus 5 and two boundary components; and so on. The elements of the exhaustion are indicated by the different shades.  The bold circles in $\Sigma$ are the boundary components of elements $E_1\subset E_2\subset\cdots$ of the exhaustion. Each of these circles double covers a circle in $\mathbb{R}^2$ (translating horizontally in the diagram). The nonbolded circles drawn in $\Sigma$ map homeomorphically onto the corresponding circles in $\mathbb{R}^2$. We indicate the degree--2 branch points in $\Sigma$ using red bolded points.}\label{fig:standardsurfaceform}
\end{figure}

\begin{proof}[Proof of Theorem \ref{thm:surfacenoncompact}]
    Let $E_1\subseteq E_2\subseteq \cdots$ be an exhaustion of $\Sigma$ satisfying the conditions of Proposition \ref{prop:surfacestandardex}. Let $D_1\subseteq D_2\subseteq\cdots$ be an exhaustion of $\mathbb{R}^2$ by nested disks. We construct a simple branched covering $f:\Sigma\to \mathbb{R}^2$ of degree equal to $\min\{2k,\aleph_0\}$, which we describe below.  The reader is encouraged to consult Figure~\ref{fig:standardsurfaceform}.

We define the map $f$ by describing its restriction on the components of $E_j \setminus E_{j-1}$.   On $E_1\cong D^2$, the restriction of the map $f:\Sigma \to \mathbb{R}^2$ is a 2--fold branched covering   $E_1 \to D_1$,  with one branch point.
    
    Now let $j \geq 2$, and let $A$ be a component of $\overline{E_j\setminus E_{j-1}}$ with two boundary components.  Then $f$ restricted to $A$ is a 2--fold branched cover over the annulus $\overline{D_j\setminus D_{j-1}}$, with $2g(A)$ branch points.

When $P$ is a component of $\overline{E_j\setminus E_{j-1}}$ with three boundary components then $f(P)=D_j$ (but {\emph{not}} as a branched covering map, as $f$ maps one of the components of $\partial P$ to the interior of $D_j$). Away from the branch points of $f$, each point in $D_j\setminus D_{j-1}$ has four preimages under $f$ in $P$, while points in $D_{j-1}$ have two preimages under $f$ in $P$. The map $f$ has $2g(P)+3$ index--2 branch points in $P$, each of which is mapped to a distinct point in $D_j\setminus D_{j-1}$.  Since all index--2 branch points of $f$ in $E_j\setminus E_{j-1}$ cover distinct points in $D_j\setminus D_{j-1}$, $f$ is a simple branched cover.

    Let $\mathcal{P}$ be the set of all connected components of $\overline{E_j\setminus E_{j-1}}$ that have three boundary components across all $j \geq 2$. Then for a regular point $p\in D_1$, the preimage $f^{-1}(p)$ contains two points in $E_1$ and two points in each $P\in\mathcal{P}$. By Remark~\ref{rem:counting-ends}, if $|\Ends(\Sigma)|=k$ is finite, then $|\mathcal{P}|=k-1$. On the other hand, if $|\Ends(\Sigma)|$ is infinite then $|\mathcal{P}|=\aleph_0$. Thus, the degree of $f$ is precisely $\min\{2k,\aleph_0\}$, as claimed.
\end{proof}

\begin{remark}
    The fact that a 2--manifold $\Sigma$ with uncountably many ends is a countable cover of $\mathbb{R}^2$ branched over a discrete set of points may raise some eyebrows which we hereby set out to lower.
    
    In the proof of Theorem \ref{thm:surfacenoncompact}, we arranged for ends of the surface $\Sigma$ to each correspond to a sequence of choices of boundary in components of $\overline{E_i\setminus E_{i-1}}$ of the form we called $P\in\mathcal{P}$.    Each such $P\subset \overline{E_i\setminus E_{i-1}}$ has three boundary components, two of which are in $\partial E_i$. Using arbitrary choice, label these two boundaries ``$L$'' and ``$R$'' for each $P\in\mathcal{P}$. Now an end of $\Sigma$ uniquely determines a (potentially infinite) word in the letters $\{L,R\}$, according to which curves intersect a ray that goes out to that end (without doubling back in the exhaustion). The set $\mathcal{P}$ also determines the degree, $2+2|\mathcal{P}|$, of $f$. Note the contrast: an end corresponds to a (finite or infinite) word in $\{L, R\}$, while the degree of the cover is a linear function of the order of a finite or countably infinite set and is thus finite or countable.

    If the end space is infinite, there is at least one end whose associated word in $\{L,R\}$ is also infinite; and if the end space of $\Sigma$ is uncountable, there are uncountably many such ends. 
    From this perspective one might consider the fact that it is possible for a countable-sheeted cover over $\mathbb{R}^2$ to have uncountably many ends to be as surprising as the fact that the set of infinite sequences in $\{L,R\}$ is uncountable. 
\end{remark}

\subsection{Noncompact 3--manifolds}
In this section, we prove our main result, reproduced below.  

\noncompact*

The theorem will be proved as follows. First we will choose a convenient exhaustion $E_1\subseteq E_2 \subseteq \dots$ of $N$ (i.e.\ apply Proposition \ref{prop:surfacestandardex}) and take $E_1$ to cover $B^3$ via a simple 3--fold branched covering map. We will then inductively extend this cover over $E_i$ with $i$ increasing, as in the proof of Theore~\ref{thm:surfacenoncompact}. In order to extend a given cover, we will rely on the following extension lemma. 

\begin{lemma}\label{lem:extend-from-bdry} 
Let $N^3$ be a connected, compact, oriented 3--manifold with boundary $A\sqcup B$, where $A$ and $B$ are nonempty surfaces which may be disconnected. Fix a positive integer $d\ge 3$ and simple $d$--fold covers $f_a:A\to S^2 \cong S^2 \times \{-1\}$, and $f_b:B\to S^2 \cong S^2 \times \{1\}$. Then $f_a,f_b$ extend to a simple branched cover $F:N\to S^2\times [-1,1]$.
\end{lemma}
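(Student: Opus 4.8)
The plan is to reduce the extension problem to a statement about completing partial branched covers over a $3$--ball, which we can control by hand. First I would pass from the product cobordism $S^2\times[-1,1]$ to a handle decomposition relative to its boundary. Concretely, $N$ admits a handle decomposition built on a collar $A\times[-1,-1+\epsilon]$ using handles of index $1,2,3$; equivalently, there is a Morse function $g:N\to[-1,1]$ with $g^{-1}(-1)=A$, $g^{-1}(1)=B$, and finitely many critical points. The plan is to push all critical points to a single middle level, so that $N$ is obtained from $A\times[-1,0]$ by attaching a single $0$--handle-free cobordism $W$ concentrated near $g^{-1}(0)$, with $W$ a punctured handlebody-like piece: that is, $N\cong (A\times[-1,0])\cup_\Sigma W$ where $W$ has boundary $\Sigma\sqcup B$, and $\Sigma$ is obtained from $A$ by surgeries. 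On the product collar $A\times[-1,0]$ we just take $f_a\times\mathrm{id}$, a simple $d$--fold (branchless) cover of $S^2\times[-1,0]$. So it suffices to extend a given simple $d$--fold cover of $\Sigma$ (namely $(f_a\times\mathrm{id})|_{\Sigma}$) and the given cover $f_b$ of $B$ across $W$ over $S^2\times[0,1]$.

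The key technical input is a monodromy/stabilization argument. A simple $d$--fold branched cover of $S^2\times[0,1]$ extending prescribed covers on the two ends is equivalent to a path in the representation variety $\mathrm{Hom}(\pi_1(S^2\setminus\{p_1,\dots,p_{2t}\}),S_d)$ — more precisely, to a sequence of elementary ``birth/death'' and ``braid'' moves on branch points connecting the monodromy of $f_a$ to that of $f_b$. The two facts I would invoke are: (i) since $d\ge 3$, any two simple branched covers of $S^2$ by the same connected surface with the same number of branch points are related by braid moves (Hurwitz moves / the classical connectivity of Hurwitz spaces for transpositions, Berstein–Edmonds \cite{bersteinontheconstruction}); and (ii) one can freely create or cancel a pair of branch points with inverse transpositions (a ``finger move''), which changes the covering surface by an annulus connect-sum, i.e.\ does not change it. Using (i) and (ii), one first stabilizes both $f_a$ and $f_b$ so that they have the same branch data over $S^2$, then realizes the connecting sequence of Hurwitz moves as a branched cover of $S^2\times[0,1]$; the total space of that cover is exactly the trace of the corresponding surgeries on $A$, which one checks is the cobordism $W$. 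The orientation hypothesis and $d\ge 3$ are exactly what make the relevant Hurwitz graph connected and the moves realizable by embedded cobordisms.

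The remaining step is bookkeeping: $N$ as abstractly given need not literally equal the cobordism traced out by a chosen Hurwitz sequence, so I would run the argument in the other direction — start from the handle decomposition of $N$, and for each handle attachment choose the corresponding elementary move on branch points (a $1$--handle on $N$ corresponds to a finger move creating two branch points, a $2$--handle to a finger move cancelling two, suitably interleaved with braid moves to align the monodromies), checking at each stage that the partial cover remains simple — which is automatic since each move is local and we may always slide the newly created branch points to be disjoint in the base. Connectivity of $N$ guarantees the intermediate total spaces stay connected, so all intermediate covers are honest simple branched covers. Assembling these over the levels of $g$ and gluing to $f_a\times\mathrm{id}$ on the bottom collar yields the desired $F:N\to S^2\times[-1,1]$.

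\textbf{Main obstacle.} The hard part is ensuring that the handle moves on $N$ can be matched with Hurwitz moves on the branch data \emph{with consistent sheet labels} — i.e.\ that when a $2$--handle cancels a $1$--handle (or, more generally, that the surgeries dictated by $N$'s handle structure) the transpositions created and destroyed can be chosen to be exactly inverse and on the correct pair of sheets after the intervening braiding. This is where $d\ge 3$ is essential (with $d=2$ there is no room to route a transposition past another), and where one must invoke the connectivity of the space of simple branched covers rather than just constructing one by hand. I expect this labeling-compatibility step to require the most care; the rest is a standard Morse-theoretic assembly.
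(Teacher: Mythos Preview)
Your ingredients match the paper's—a relative handle decomposition of $N$ and the Berstein–Edmonds classification of simple covers—and your point~(i) is indeed the key lemma. The gap is that your sketch tries to deploy it handle by handle, whereas it can only be invoked globally. A $2$--handle of $N$ is attached along a \emph{prescribed} curve $\gamma\subset\Sigma$, and for a death of two branch points downstairs to realize it, $\gamma$ must be the core of a cancellable annulus for the current cover $\Sigma\to S^2$. Whether a given $\gamma$ is isotopic to such a core is a global question about the cover; there is no local sequence of braid moves that forces it, and extra births only add further $1$--handles upstairs that must later be undone. What actually makes the argument go through is knowing that the cover of $\Sigma$ built up from the $A$ side is equivalent to one in which \emph{all} the $2$--handle attaching curves are simultaneously cancellable—and the only way to exhibit such a cover is to build it from the $B$ side and then compare.

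This is exactly how the paper proceeds. Decompose $N=H_a\cup_\Sigma H_b$ into two compression bodies, each a collar with $1$--handles attached; your $2$--handles become $1$--handles viewed from $B$. Extending a simple cover over a $1$--handle is a one-line local move (\cite[Lemma~6.1]{bersteinontheconstruction}: add a single branch arc via the obvious involution on the handle), yielding two simple $d$--fold covers $\widetilde f_a,\widetilde f_b\colon\Sigma\to S^2$ of the same middle surface. Then \cite[Theorems~3.4 and~4.1]{bersteinontheconstruction}—the second is where $d\ge3$ enters—show these differ by homeomorphisms isotopic to the identity on both $\Sigma$ and $S^2$, so the halves glue over $S^2\times[-1,1]$. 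The sheet-consistency problem you correctly flag is thus resolved by a single appeal to the classification at the middle surface, not negotiated across the handles.
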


Note that in Lemma \ref{lem:extend-from-bdry}, the simple branched covers $f_a, f_b$ may restrict to unbranched covers on some components of $A$ or $B$.  A version of Lemma~3.10 is proven for $N$ with two connected boundary components in \cite[Theorem 6.2]{berstein1979construction}.  For completeness, we include a proof that allows for arbitrarily many boundary components in the manifold $N$.

\begin{remark}
The reader may compare \cite[Lemma 3]{montesinos2002representing}, which is used in proving that compactified open 3--manifolds are branched covers of $S^3$. In \cite[Lemma 3]{montesinos2002representing}, it is established that a compact oriented 3--manifold $M^3$ with $\ell$ boundary components admits a branched cover over $S^3\setminus\sqcup_\ell \mathring{B}^3$ whose restriction to each boundary component of $M$ is a 3--fold branched cover over one of the 2--sphere boundaries of the target. 
The lemma does not guarantee that a {\emph{fixed}} cover $\partial M\to S^2$ can be extended to all of $M$, which is what we need (in addition to setting $\ell=2$ regardless of what $M$ is; and loosening the requirements on the degree of the given cover at each component of $\partial M$) in order to construct covers of open manifolds over $\mathbb{R}^3$ by piecing together maps on compact subsets.  
\end{remark}

\begin{proof}[Proof of Lemma \ref{lem:extend-from-bdry}]
We view $N$ as a cobordism from $A$ to $B$. As such, $N$ can be decomposed as a union of two compression bodies
\[H_a:= (A\times [-1,0])\cup \text{1-handles along }(A\times \{ 0\})\]
and 
\[
H_b:=(B\times [0,1])\cup \text{1-handles along }(B\times \{0\}).
\]
In other words, we have a decomposition of the cobordism $N$ as a union of 1-handles (those in $H_a$) and 2-handles (which, turned upside down, are 1-handles in $H_b$).

We can now extend $f_a,f_b$ separately over each compression body $H_a, H_b$ by adding one arc to the branch set within each attached 1-handle (see Lemma 6.1 in~\cite{bersteinontheconstruction} and also Figure~\ref{fig:1HandleAttach}; this is where we use that $N$ is orientable, as the 1-handle in question must be orientable for the lemma to apply). We briefly recall why extending $f_a$ over a 1-handle $h \cong D^2 \times [0,1]$ is possible. (The argument also applies to $f_b$.)  
Without loss of generality, we may assume that that the feet of the 1-handle are mapped to the same 2-disk, $D$, in $S^2$, which is disjoint from the branch locus; and furthermore that
\[
f_a((x,y) \times \{0\}) = f((x,-y)\times \{1\}),
\]
where we identified the feet of $h$, namely $D^2\times \{0\}$ and $D^2\times \{1\}$, with the disks in $A\times \{0\}$ to which they are attached.

Using the involution $(x,y) \times \{t\} \mapsto (x,-y) \times \{1-t\}$, we can extend $f_a$ locally over $h \cong D^2 \times [0,1]$. In order to complete the map thus obtained to a simple cover of the desired degree, we attach a 3-ball, $D^2\times[0, 1]$, along $D^2\times\{0\}$ to each additional component of pre-image $f_a^{-1}(D)$. We then extend the map over each such ball by a homeomorphism onto $f_a(h)$.
See Figures~\ref{fig:1HandlePre} and \ref{fig:1HandleAttach} for an illustration in degree 3.  We shall denote the extensions of $f_a,f_b$ over $H_a, H_b$ also by $f_a, f_b$, respectively.

\begin{figure}
\begin{center}
\includegraphics[width=85mm]{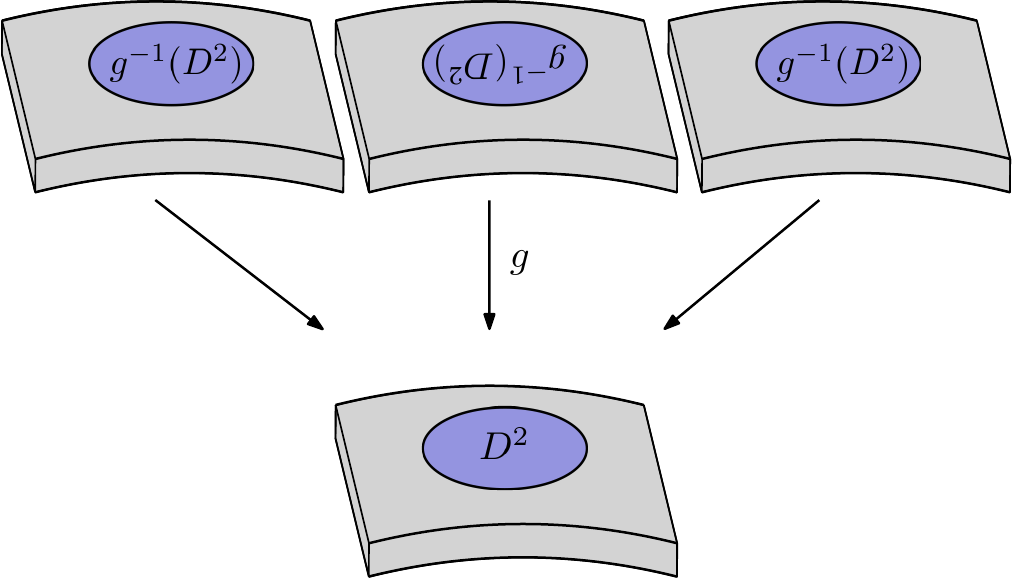}
\end{center}
\caption{Local model of simple branched covering away from the branching set. Here we draw a degree three cover.}
\label{fig:1HandlePre}

\begin{center}
\includegraphics[width=85mm]{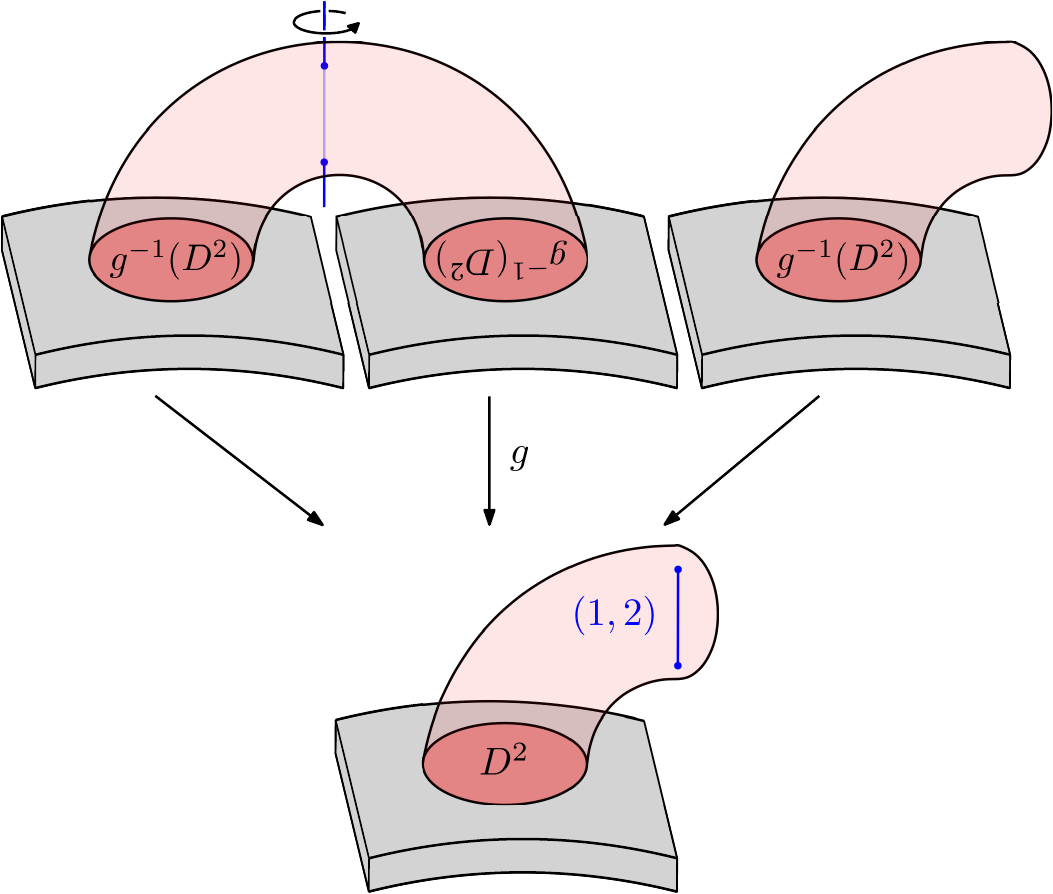}
\end{center}
\caption{Symmetrizing a 1-handle $H$ with respect to a simple branched covering $g$, so that $g$ can be extended along $H$ using the local involution $(x,y)\times\{t\} \mapsto (x,-y)\times\{1-t\}$.}
\label{fig:1HandleAttach}
\end{figure}

Now let $\Sigma = H_a \cap H_b$ be the orientable surface along which $H_a$ and $H_b$ are glued.  The maps $f_a$ and $f_b$ restrict to simple $d$--fold branched covering maps $\widetilde{f}_a,\widetilde{f}_b: \Sigma \rightarrow S^2$, which are not expected to agree. However, since the covers $\widetilde{f}_a$ and $\widetilde{f}_b$ have the same domain and degree, 
they are equivalent by \cite[Theorem 3.4]{bersteinontheconstruction}. That is, by composing with automorphisms of $\Sigma$ and $S^2$, we may assume that the two covers agree (see also~\cite{luroth} and~\cite{clebsch1873theorie}) and we will thus denote them both by $\widetilde{f}$.  Then, by \cite[Theorem 4.1]{bersteinontheconstruction} (here we use $d\ge 3$), there are homeomorphisms $\varphi : \Sigma \rightarrow \Sigma$ and $\psi: S^2 \rightarrow S^2$, both of which are isotopic to the identity function on their respective domains, which satisfy $\widetilde{f} \circ \varphi = \psi \circ \widetilde{f}$.  

As a result, if we instead glue $H_a$ and $H_b$ along $\Sigma$ by the map $\varphi$, and $S^2 \times [-1,0]$ to $S^2 \times [0,1]$ along $S^2 \times \{0\}$ by $\psi$, the branched coverings $\widetilde{f}_a$ and $\widetilde{f}_b$ glue to give a well-defined branched covering 
\[
F : H_a \cup_\varphi H_b \rightarrow  (S^2 \times [-1,0]) \cup_{\psi} (S^2 \times [0,1]).
\]
As both $\varphi$ and $\psi$ are isotopic to the respective identity maps, we have that $H_a \cup_\varphi H_b \cong N$ and $$(S^2 \times [-1,0]) \cup_{\psi}  (S^2 \times [0,1]) \cong S^2\times[-1,1],$$ so we have obtained the desired branched covering.
\end{proof}

\begin{proof}[Proof of Theorem~\ref{thm:noncompact}] 
Let $N$ be an open 3--manifold. 
    Let $E_1\subseteq E_2\subseteq \cdots$ be an exhaustion of $N$ satisfying the conditions of Proposition \ref{prop:surfacestandardex}. (This usage is the reason that Proposition \ref{prop:surfacestandardex} is written for general dimension.) Let $B_1\subseteq B_2\subseteq\cdots$ be an exhaustion of $\mathbb{R}^3$ by nested balls. We form a simple branched covering $f:N\to \mathbb{R}^3$ as in Figure \ref{fig:3mfdstandard}, with $E_i$ covering $B_i$ for each $i$, as follows.

    \begin{figure}
    \labellist
    \pinlabel{1-handles} at -20 80
    \pinlabel{2-handles} at -35 175
    \pinlabel{1-handles} at 110 250
    \pinlabel{2-handles} at 110 337
    \pinlabel{Glue as in Lemma \ref{lem:extend-from-bdry}} at 110 295
    \pinlabel{Glue as in} at 30 145
    \pinlabel{Lemma \ref{lem:extend-from-bdry}} at 30 130
    \pinlabel{$D_1$} at 105 130
    \pinlabel{$D_2$} at 154 130
    \pinlabel{$D_3$} at 203 130
    \pinlabel{$B_1$} at 330 11
    \pinlabel{$B_2$} at 330 130
    \pinlabel{$B_3$} at 330 295
    \pinlabel{\Large$f$} at 265 230
    \endlabellist
    \begin{center}
        \includegraphics[width=80mm]{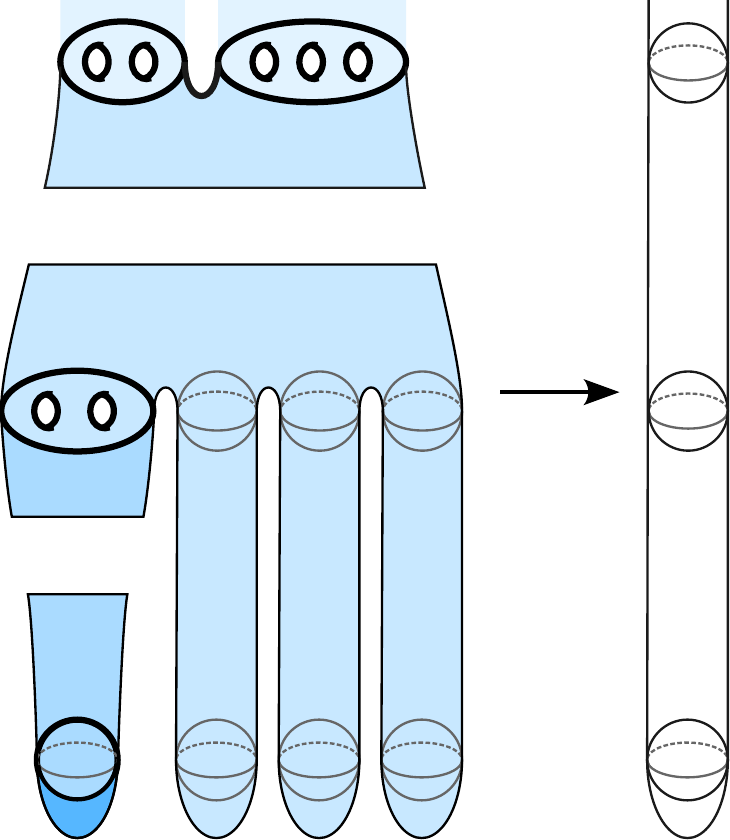}
        \caption{A simple branched cover $f$ of an open 3--manifold $N$ (left) over $\mathbb{R}^3$ (right). In this schematic, $E_1$ is a dark 3-ball; $E_2$ has one (genus--2) boundary component; $E_3$ has two boundary components (one genus--2 and one genus--3). The elements of the exhaustion are indicated by the different shades. In this figure, the indicated surfaces in $N$ on the left map to the 2--spheres on the right at the same height. The bold surfaces in $N$ are the boundary components of elements $E_1\subset E_2\subset\cdots$ of an exhaustion as in Proposition \ref{prop:surfacestandardex}. On each of these surfaces, $f$ restricts to a 3--fold simple branched cover over a 2--sphere in $\mathbb{R}^3$ (indicated by translating horizontally in the diagram). The nonbolded 2--spheres drawn in $N$ map homeomorphically onto the corresponding 2--sphere in $\mathbb{R}^3$.}\label{fig:3mfdstandard}
    \end{center}
    \end{figure}

       On $E_1\cong B^3$, take $f$ to be a simple 3--fold branched covering map. (The existence of such a map is standard; for example take the branch locus to consist of a trivial 2-stranded tangle, with the monodromy of the meridian about one strand permuting sheets 1,2 and the monodromy of the meridian of the other strand permuting sheets 2,3.)
       
       Now proceed for increasing $j$. When $A$ is a component of $\overline{E_j\setminus E_{j-1}}$ with two boundary components, $f$ restricts to $A$ as a 3--fold simple branched cover over the annulus $\overline{B_j\setminus B_{j-1}}$. We obtain this map $f$ by applying Lemma \ref{lem:extend-from-bdry} after choosing the restriction of $f$ to the new boundary component to be some 3--fold simple branched cover over $S^2$.

       Now suppose $P$ is a component of $\overline{E_j\setminus E_{j-1}}$ with three boundary components $\Sigma_1, \Sigma_2, \Sigma_3,$ where $\Sigma_1\subseteq E_{j-1}$ and $\Sigma_2, \Sigma_3\subseteq (E_j\setminus E_{j-1})$. Let $D_1, D_2, D_3$ be disjoint closed balls in the interior of $P$, and define $f|_{D_i}$ to be a homeomorphisms to $B_{j-1}$ for each $i=1,2,3$. On each of the two components of $\partial P\cap \partial E_j$, take $f$ to be some 3--fold simple branched cover over $S^2$. Now apply Lemma~\ref{lem:extend-from-bdry} with $A=\Sigma_1\cup\partial D_1\cup\partial D_2\cup\partial D_3$ and $B=\Sigma_2\cup \Sigma_3$ to $P\setminus\cup_{i=1}^3\mathring{D_i}$ to extend $f$ over $P$. (Note that the cover of $A$ over $S^2$ is unbranched on the components $\partial D_1, \partial D_2, \partial D_3$. This is allowed in the use of Lemma~\ref{lem:extend-from-bdry}. In fact, since $\Sigma_1\to S^2$ is a simple cover of degree 3, the cover $A\to S^2$ is still simple, of degree 6, with branch set contained in $\Sigma_1$.)

    Lemma \ref{lem:extend-from-bdry} ensures that the constructed branched covering $f:N\to\mathbb{R}^3$ is simple. 
    Let $\mathcal{P}$ be the set of all components of $\overline{E_j\setminus E_{j-1}}$ that have three boundary components  across all $j$. Then for a regular point $p\in B_1$, the preimage $f^{-1}(p)$ contains three points in $E_1$ and three points in each $P\in\mathcal{P}$. By the remark after the statement of Proposition~\ref{prop:surfacestandardex}, if $|\Ends(\Sigma)|=k$ is finite then $|\mathcal{P}|=k-1$. If $|\Ends(\Sigma)|$ is infinite then $|\mathcal{P}|=\aleph_0$. Thus, the degree of $f$ is precisely $\min\{3k,\aleph_0\}$.
\end{proof}

\begin{figure} 
\begin{center}
\labellist
\pinlabel{\small{(12)}} at 0 92
\pinlabel{\small{(23)}} at 20 92
\pinlabel{\small{(12)}} at 52 92
\pinlabel{\small{(23)}} at 74 92
\pinlabel{\small{(13)}} at 82 59
\pinlabel{\small{(12)}} at 182 92
\pinlabel{\small{(13)}} at 182 -8
\pinlabel{\small{(23)}} at 204 92
\pinlabel{\small{(12)}} at 254 92
\pinlabel{\small{(13)}} at 254 -8
\pinlabel{\small{(23)}} at 276 92
\pinlabel{\small{(12)}} at 315 92
\pinlabel{\small{(13)}} at 315 -8
\pinlabel{\small{(23)}} at 337 92
\endlabellist
\vspace{.1in}
\includegraphics[width=110mm]{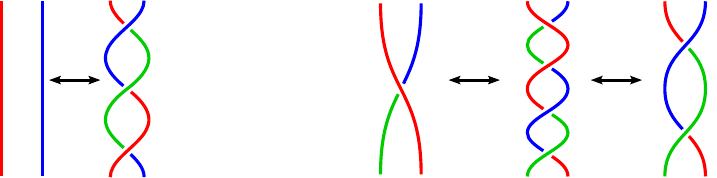}
\end{center}
\caption{Left: Montesinos move on the branch locus of a 3--fold cover. Right: Montesinos move and RII move. Applying a Montesinos move changes the branching locus but results in a diffeomorphic covering manifold~\cite{montesinos1985lectures}.}
 \label{fig:montesinos}
\end{figure}

\begin{figure}
\begin{center}
\labellist
\pinlabel{\small{(12)}} at -13 138
\pinlabel{\small{(23)}} at -13 118
\pinlabel{\small{(12)}} at -13 80
\pinlabel{\small{(23)}} at -13 60
\pinlabel{\small{(12)}} at -13 20
\pinlabel{\small{(23)}} at -13 0

\pinlabel{\small{(12)}} at 262 138
\pinlabel{\small{(23)}} at 262 118
\pinlabel{\small{(12)}} at 262 80
\pinlabel{\small{(23)}} at 262 60
\pinlabel{\small{(12)}} at 262 20
\pinlabel{\small{(23)}} at 262 0
\endlabellist
\includegraphics[width=120mm]{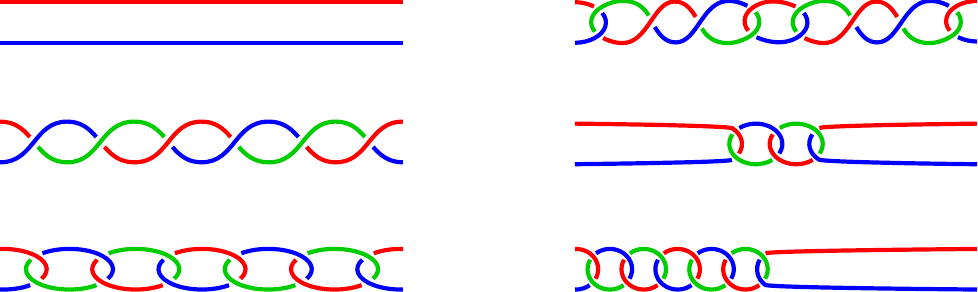}
\end{center}
\caption{An assortment of possible branch sets for a 3--fold cover $\mathbb{R}^3\to \mathbb{R}^3$, related by Montesinos moves. Any number of connected components can be realized.}
 \label{fig:R3-br-sets}
\end{figure}

\rthreeuniversal*

\begin{proof}[Proof of Corollary~\ref{rthreeuniversal}  
from Theorem \ref{thm:noncompact} and Proposition \ref{prop:r2coveringr2}]
    Let $N$ be an open 3--manifold. If $N$ has infinitely many ends, then by Theorem \ref{thm:noncompact}, $N$ admits an $\aleph_0$--fold branched cover over $\mathbb{R}^3$. If $N$ has finitely many ends, then by the same theorem there exists a finite--fold branched cover $f:N\to\mathbb{R}^3$. Let $g:\mathbb{R}^3\to\mathbb{R}^3$ be the $\aleph_0$--fold branched cover constructed in Proposition~\ref{prop:r2coveringr2}, with branch set disjoint from the branch locus of $f$. Then $g\circ f: N\to\mathbb{R}^3$ is a (nonsimple) $\aleph_0$--fold branched cover. If desired, perturb $g\circ f$ to obtain a simple branched cover.
\end{proof}

\begin{remark}\label{rem:br-set}
    The branch locus of the map constructed in the proof of Theorem \ref{thm:noncompact} is a disjoint union of embedded circles and lines. We note that noncompact components can be traded for compact ones by applying Montesinos moves on the branching set, as in the next example.
\end{remark}

\begin{example}\label{ex:R3-over-R3}
    Figure~\ref{fig:R3-br-sets} shows a family of 3--fold branched covers from  $\mathbb{R}^3$ to itself. The branch locus can be chosen to consist of two lines,  an infinite chain of unknots linked to their neighbors, or a union of finitely many linked unknots and two lines. These branching sets are related by Montesinos moves, shown in Figure \ref{fig:montesinos}.  A variation of the same construction allows us to realize $\mathbb{R}^3$ as an $n$--fold cover of itself, branched along a collection of embedded lines and (finitely or infinitely many) unknots. 
\end{example}

\section{Further questions} \label{sec:open-qs}
In this section, we suggest some questions that arose in the course of writing this paper. The authors would be interested to learn of solutions to these open questions.

\begin{question}
    Is $\mathbb{R}^m$ a universal base for all $m$?
\end{question}

\begin{question}
    Which open $m$--manifolds, including orientable and non-orientable ones, are universal bases?
\end{question}

\begin{question}
Does there exist a closed, oriented $m$--dimensional manifold $M$ with $\pi_1(M)$ finite such that $M$ a universal $k$--base for some $k<m|\pi_1(M)|$? 
\end{question}

 We show in Corollary~\ref{cor:nonorientodd} that compact non-orientable 3--manifolds are not covered by $S^3$ hence are not universal $n$--bases, in the most general sense possible, for any $n$. However, we can still ask: 

\begin{question}
Which compact non-orientable 3--manifolds are branch covered by all non-orientable 3--manifolds? Similarly, which compact non-orientable 4--manifolds are covered by all non-orientable 4--manifolds?
\end{question}

It was recently shown~\cite{bais2025branched} that  $\mathbb{RP}^4$ is covered by all non-orientable compact manifolds, even though it is not a universal $n$-base for any fixed $n$; and that $S^1\widetilde{\times}S^3$, the twisted $S^3$ bundle over $S^1$, is not covered by all non-orientable compact 4--manifolds.

As noted earlier, by a famous theorem of Alexander~\cite{alexander1920note}, every compact oriented PL $m$--manifold is a branched cover over $S^m$, but without restricting the number of sheets. It is unknown whether there exists some $n_m$ such that $S^m$ is a universal $n_{m}$--base when $m\ge 5$. 
The compact version of the following question has been kicking around for a couple of decades (see for example \cite[Problem~C]{piergallini1995four}).

\begin{question}\label{question:spheres}\leavevmode
\begin{enumerate}
    \item Is $S^m$ a universal $n_m$--base for all $m$? If so, can we take $n_m=m$?
  \item\label{question:spherespartb}  Is $\mathbb{R}^m$ a universal $\min\{n_m\cdot k,\aleph_0\}$--base among $m$--manifolds with $k$ ends for $m>4$? If so, what are the values of $n_m$?
\end{enumerate}
\end{question}
\noindent The case $n=4$ is answered affirmatively in~\cite{piergallinizuddas_open}. Note that they show an open 4--manifold $M$ is a $\min\{3|\Ends(M)|,\aleph_0\}$--sheeted cover of $\mathbb{R}^4$, rather than a $\min\{4|\Ends(M)|,\aleph_0\}$--sheeted cover, as one might guess from $S^4$ being a universal 4--base but not a universal 3--base. For this reason, it is hard to predict what values of $n_m$ should be expected in a potential answer to Question~\ref{question:spheres}\eqref{question:spherespartb}.

\begin{remark} The sphere $S^m$ is certainly not a universal $n$--base for $n<m$ since the length of the reduced cohomology ring of the $m$-torus $T^m$ is $m$; therefore, a branched cover $f: T^m\to S^m$ has degree at least $m$~\cite{berstein1978degree}. \end{remark}
\begin{remark}
        In low dimensions, we can arrange for branch loci to be embedded submanifolds \cite{hilden1976three,iori2002}. 
{Iori and Piergallini \cite{iori2002} showed that every 4--manifold is a 5--fold branched cover of $S^4$ with branch locus an embedded surface.}
However, this fails in higher dimensions, even in the topological category. For example, Berstein--Edmonds \cite{berstein1978degree} showed that many spin manifolds cannot be finite--fold branched covers of a sphere with embedded branch loci. For instance, they show explicitly that for $m\geq 1$ the quaternionic projective space $\mathbb{H}P^{2m}$ is not realizable as a finite--fold branched cover over the sphere $S^{8m}$ with branch locus a locally flat submanifold. It is natural to wonder which manifolds are universal $n$--bases if additional restrictions are made on the branching loci.
    \end{remark}

\begin{figure}
\includegraphics[width=140mm]{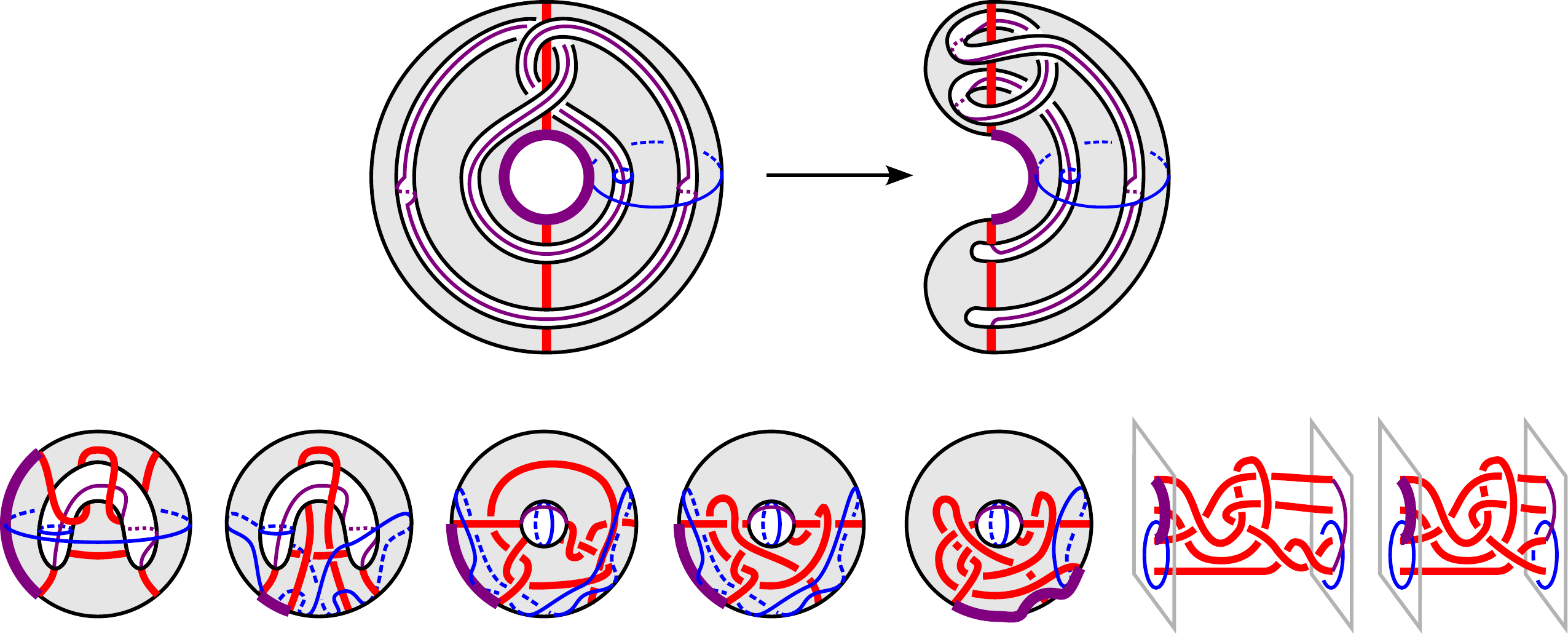}
\caption{ Top: A degree--2 branched covering of a Whitehead link complement over $S^2\times I$, where the latter is represented as a 3-ball (gray) with the neighborhood of an arc (white) deleted. The vertical red line is the branch set. The Whitehead manifold can be built as a union of a solid torus, $S^1_0\times D^2_0,$ and a countable family of Whitehead link complements $X_1,X_2,\ldots$. The bold (purple) circle indicates the curve in $\partial X_1$ that bounds a disk in $S^1_0\times D^2_0$. We also highlight a longitude on the ``inner" boundary component. The other highlighted boundary curves (blue) illustrate the gluing of $X_j, X_{j+1}$ -- the ``inner" curve in $X_j$ is glued to the ``outer" curve in $X_{j+1}$. \\
Bottom: We successively redraw the top right diagram in order to visualize the branch locus as a tangle in $S^2\times I$, where the latter is eventually represented in a standard way, with the product structure readily apparent. Note that the highlighted (blue) curves in $S^2\times \partial(I)$ are horizontal translates of each other. This is essential in concatenating these braid segments, since in the cover this means that successive circles will be identified to each other, guaranteeing that the construction of the Whitehead manifold is compatible with the branched covering maps from each $X_i$ to its corresponding copy of $S^2\times I$.}\label{fig:whitehead}
\end{figure}

\begin{example}\label{ex:whitehead}
In 1935, Whitehead~\cite{whitehead1935certain} constructed $W,$ later called {\it the Whitehead manifold,} the first example of a contractible open 3--manifold which is not homeomorphic to $\mathbb{R}^3$. The manifold $W$ is built as an ascending union of solid tori, $W=\cup_i (S^1_i\times D^2_i), i=0, 1, \dots$, where each $S^1_i\times D^2_i$ is the exterior of an unknot $U_i\subset S^3$ and $U_i$ sits inside a neighborhood of $U_{i-1}$ as the Whitehead pattern (Figure~\ref{fig:whitehead} top left). One sees from this description that $W$ may also be written as the union of the first solid torus, $S^1_0\times D^2_0,$ and a countable family of Whitehead link complements $X_j, j=1, 2, \dots$, attached to each other successively along tori $S^1\times S^1$. Precisely, for $j>1$, the ``outer'' (a distinction without a difference) boundary component of $W_j$ is attached to the ``inner'' boundary component of $W_{j-1}$ via the identification described in Figure~\ref{fig:whitehead}. (For $j=1,$ attach the outer boundary of $X_1$ to $\partial(S^1_0\times D^2_0)$, again as indicated in the figure.) The latter description is used in building our 2-- and 3--fold covers of $W$ over $\mathbb{R}^3$.) Stabilizing to a degree--3 cover allows us to arrange for the branch set to be a link of countably many compact components.

\begin{remark}
We note that there is a quiet ambiguity in the construction given in our Example~\ref{ex:whitehead}. We have regarded the Whitehead manifold as built by iteratively attaching copies of $X_{i}$ to $S^1_0\times D^2_0\bigcup(\cup_{j=1}^{i-1} X_j)$, a manifold with $S^1\times S^1$ boundary. If $X_i$ were a solid torus, in order to determine the attachment, it would suffice to specify the image of a meridian. In our case, in order to pin down {\it the} Whitehead manifold, we technically ought to specify the images of the oriented meridian and longitude of the component of $\partial(X_i)$ along which we perform the attachment. Even the orientation of the meridian plays a role: while $X_i$ admits an involution reversing the orientation of the meridian, the two choices of orientation force distinct identifications of the endpoints or the branching set, when stacking adjacent tangles in $S^2\times I$, in order for the gluing of each $X_i$ to be compatible with the covering map. Thus, if we change the orientation of the meridian on the relevant boundary component of $X_i$, this would result in swapping the ``inside'' and ``outside'' of the equatorial (blue) circle in $S^2\times I$, altering the induced identification between the tangles and thus the total branching set. Since there is not a unique choice of identifying the boundary torus so as to produce a contractible manifold, we are not too careful to remain faithful to the original construction. Specifically, if we denote $S^3\backslash \large (S^1_0\times D^2_0\bigcup(\cup_{j=1}^{i} X_j)\large) =: U_i,$ then each $U_i$ is an unknotted solid torus in $S^3$ and ``the'' Whitehead manifold can also be written as the union of the nested tori $S^1_0\times D^2_0 \subset U_1\subset U_2\subset ...$. For contractibility, it suffices that, for each $i$, the longitude of $U_i$ is nullhomotopic in $U_{i+1}$;  this is the case for the nested union pictured. Indeed, Whitehead's construction has been generalized far more broadly. For example, McMillan constructed uncountably many pairwise distinct contractible open 3--manifolds~\cite{mcmillan1962some}. 
\end{remark}

\begin{remark}
    It is shown in~\cite[Theorem~1]{mcmillan1961cartesian} that every open contractible manifold can be written as an ascending union of handlebodies. Thus, a construction analogous to ours can be applied to all contractible open manifolds.   
\end{remark}

In~\cite{montesinos2003open}, Montesinos gives a construction of uncountably many 2--fold covering maps of certain contractible open manifolds, including $W$, over $\mathbb{R}^3$. The branching loci are lines obtained by removing the wild points of knots; the knots are branch sets of covers over $S^3$ by the compactifications of the open manifolds in question. 
This construction makes use of symmetry in certain examples. We do not know if every  contractible 3--manifold $M$ admits a 2--fold branched covering over $\mathbb{R}^3$.   
\end{example}

\begin{figure}
\labellist
\pinlabel{(12)} at -30 340
\pinlabel{(23)} at -30 265
\pinlabel{(12)} at -30 80
\pinlabel{(23)} at -30 15
\endlabellist
\includegraphics[width=120mm]{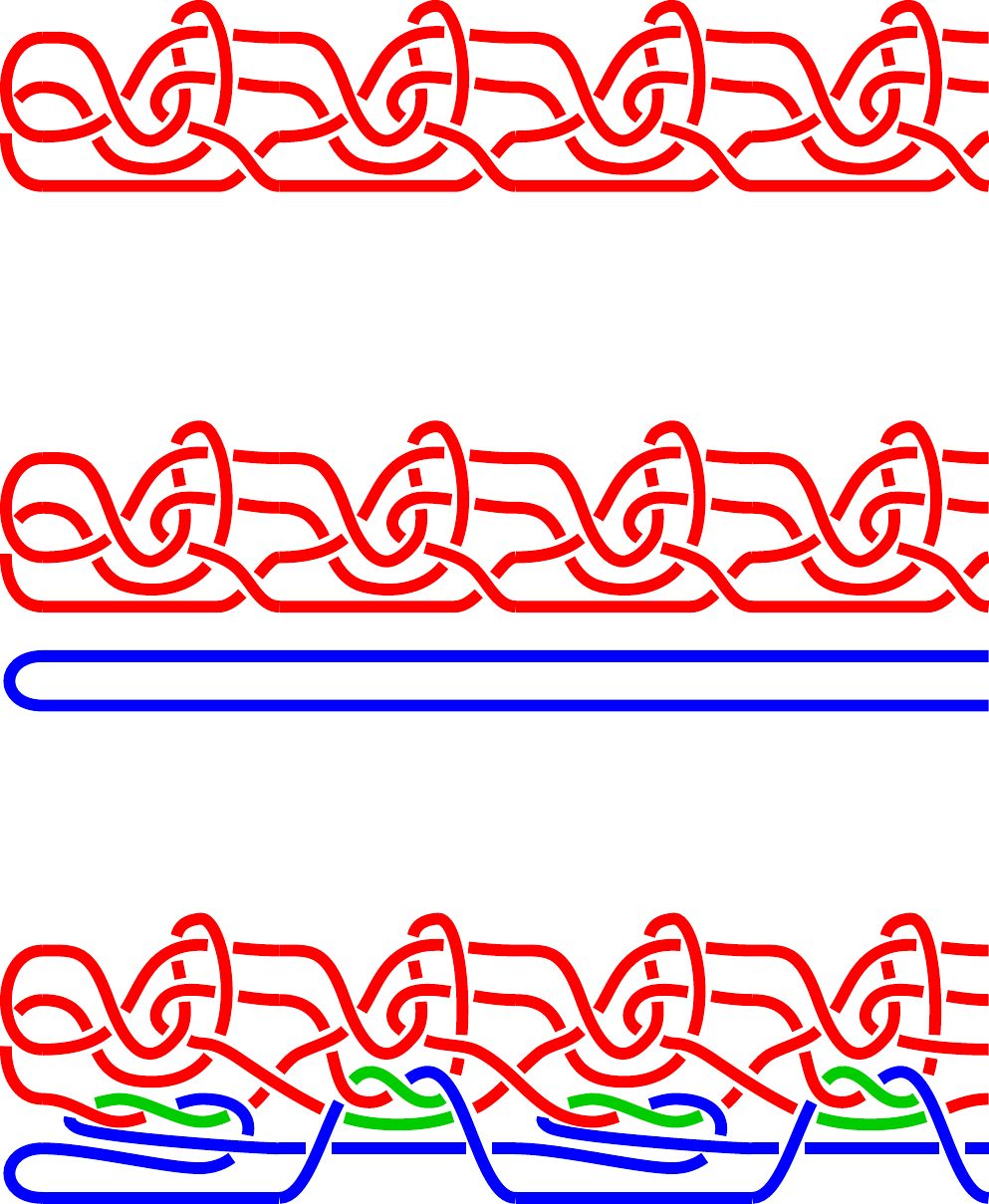}
\caption{Top: A knotted line in $\mathbb{R}^3$, the double branched cover of which is the Whitehead manifold. This line is obtained by concatenating instances of the tangle in Figure \ref{fig:whitehead} (bottom right). Center: We stabilize to a degree--3 branched covering of the Whitehead manifold over $\mathbb{R}^3$. Bottom: We apply the Montesinos move infinitely many times (preceded, alternately, by RI moves and pairs of RII moves, left to right) to transform the branch locus into a union of infinitely many circles.}\label{fig:whitehead2}
\end{figure}

\begin{question}
    Is there a contractible 3--manifold $M$ that does not admit a 2--fold covering map over~$\mathbb{R}^3$?
\end{question}

If a manifold $M^3$ as above exists, one would expect an obstruction to the existence of a 2--fold branched cover $M\to\mathbb{R}^3$ to come from the fundamental groups at ends of $M$ (Definition~\ref{def:pi-1-end}). For now, such an obstruction appears elusive.

\section*{Appendix: What we talk about when we talk about branched covers} \label{sec:appendix}

We offer a brief discussion comparing the different definitions of a ``branched cover" that one encounters in the literature. 
The concept of a branched covering has its roots in the theory of Riemann surfaces, where they arise from the study of analytic maps. In higher dimensions branched coverings are formulated naturally for PL manifolds, and were used by Heegaard~\cite{heegaard1898forstudier}, Tietze~\cite{tietze1908topologischen}, Alexander~\cite{alexander1920note,alexander1926types,alexander1928topological}, and Reidemeister~\cite{reidemeister1932knoten}.  A completely topological generalization of branched coverings was not achieved until work of Fox~\cite{fox1957covering}, who defined them as a certain class of maps between locally-connected $T_1$-spaces.

When $M$ and $N$ are compact manifolds (possibly with boundary), one may also define a branched cover as a surjective, finite-to-one, proper, open map~\cite{bersteinontheconstruction}. When the manifolds involved are noncompact, it is reasonable to waive the assumption that the degree is finite. One may adopt the following definition.

\begin{definition}
\label{def:branchedcovering2}
Let $N$ and $M$  be smooth connected $m$--dimensional manifolds.  A smooth map $f:N \rightarrow M$ is a \emph{branched covering} if it is a surjective, open map, such that for any $y \in M$ the set $f^{-1}(y)$ is a totally disconnected subset of $N$, and for any proper compact set $C \subsetneq M$, each connected component of $f^{-1}(C)$ is compact. As before, the \emph{branch set} of $f$ is the set $B_f \subseteq N$ where $f$ fails to be a local homeomorphism, and $f(B_f) \subseteq M$ is its \emph{branch locus}. 
We assume that $B_f$ is nonempty, otherwise $f$ is just an ordinary (unbranched) covering map. 
\end{definition}

Although not phrased in the language of Fox, in the smooth category this definition is equivalent to that of \cite{fox1957covering}.  We refer the interested reader to \cite{fox1957covering} or \cite{montesinos2005branched} for the relevant definitions, and briefly outline the connection between these two notions of branched coverings.

When $M$ and $N$ are manifolds, by~\cite[Corollary 4.7]{montesinos2005branched} the condition that $f^{-1}(y)$ is totally disconnected for all $y \in M$ is equivalent to $f:N\rightarrow M$ being a \emph{spread}.  Furthermore, by \cite[Theorem 5.6]{montesinos2005branched} and the fact that the components of $f^{-1}(C)$ are compact we can see that $f$ is in fact a \emph{complete spread}.  The compactness condition can also be seen to be a necessary condition for completeness in this setting.  Note also that Fox's branched coverings are necessarily open maps, whenever the base is first-countable \cite[Corollary 9.12]{montesinos2005branched}. That the branched coverings of Definition~\ref{def:branchedcovering2} satisfy the remaining requirements of Fox's definition \cite[Definition 10.1]{montesinos2005branched} follows from the discussion below.

Let $f:N\rightarrow M$ be a branched covering in the sense of Definition~\ref{def:branchedcovering2}. By \cite[Corollary 2.3]{church1963differentiable} the function $f$ will be a local homeomorphism away from a nonempty codimension--2 set $B_f\subset N$.  
Restricting $f$ away from this set gives an ordinary (unbranched) covering map $$f|_{N \setminus f^{-1}(f(B_f))} : N \setminus f^{-1}(f(B_f))\rightarrow M \setminus f(B_f).$$  

By \cite[Theorem 2.1]{church1963differentiable}  there is a closed subset $E \subseteq B_f$ with $\dim E \leq \dim N - 3$, such that every point of $N \setminus E$ has a neighborhood on which $f$ is topologically equivalent to the map 
\begin{align*}
\mathbb{C} \times \mathbb{R}^{m-2} &\rightarrow \mathbb{C} \times \mathbb{R}^{m-2}\\
(z , x_2, \ldots , x_m) &\mapsto (z^d , x_2, \ldots , x_m)
\end{align*}
for some $d \in \{1,2,3,\ldots \}$.  The integer $d$ is the local degree of the cover at the corresponding component of $f^{-1}(U)$. When $E\neq \emptyset$, in a neighborhood of a point in $E$ the map $f$ is equivalent to the cone on a branched covering $S^{m-1}\to S^{m-1}$.

When the branching locus $B_f$ of a branched cover $f: N^m\to M^m$ is a locally flat submanifold of $M$, we also have that the restriction $f_|:{f^{-1}(B_f)\to B_f}$ is an ordinary (unbranched) covering map~\cite[Lemma~3.1]{berstein1978degree}.

\subsection*{Acknowledgements}
MH is grateful to the Max Planck Institute for Mathematics in Bonn and the Dublin Institute for Advance Studies for hosting him during part of the preparation of this work.  MH also thanks Valentina Bais for helpful comments on an early draft of this paper. AK is indebted to Aru Ray for being the first to entice her into the bewildering woods of noncompact manifolds. This project began in Fall 2022 when AK and MM visited MH at Brigham Young University.

\begin{center}
  \rule{5cm}{0.4pt}
\end{center}

\vspace{.2in}
{\noindent \bf{\fontsize{23pt}{5pt}\selectfont W h i t e h e a d \hspace{.35cm} M a n i f o l d} }

\vspace{.2in}

\noindent {\it \fontsize{19.5pt}{5pt}\selectfont Layers of space in each other’s quiddity}\\
\noindent {\it  {\it \fontsize{16pt}{5pt}\selectfont \textls[20]{Extending inward beyond timidity}}\\
\noindent {\it \fontsize{15pt}{5pt}\selectfont  \textls[150]{With a splat and a wink}}\\
\noindent {\it \fontsize{14pt}{5pt}\selectfont \textls[150]{To a point it shrinks}}\\
\noindent {\it \fontsize{9pt}{3pt}\selectfont \textls[-20]{Hungry loops hunt the neck of infinity}}\\

\bibliographystyle{amsalpha}
\bibliography{main}
\end{document}